\documentclass[final,3p,times,authoryear]{elsarticle}
 \pdfoutput=1
\usepackage{amsthm}
\usepackage{amssymb}
\usepackage{amsmath}
\usepackage{bm}
\usepackage{framed}
%
\renewcommand{\P}{\mathbb{P}}

\DeclareMathOperator{\eqd}{\stackrel{\mathrm{d}}{=}}

\DeclareMathOperator{\E}{\mathbb{E}}
\DeclareMathOperator{\Z}{\mathbb{Z}}
\DeclareMathOperator{\R}{\mathbb{R}}

\DeclareMathOperator{\B}{\mathcal{B}}

\DeclareMathOperator{\vto}{\stackrel{\mathrm{v}}{\to}}
\DeclareMathOperator{\wto}{\stackrel{\mathrm{w}}{\to}}
\DeclareMathOperator{\iidsim}{\stackrel{\mathrm{\mathrm{iid}}}{\sim}}

\DeclareMathOperator{\Var}{\mathrm{Var}}
\DeclareMathOperator{\Cov}{\mathrm{Cov}}

\renewcommand{\[}{\left[}

\renewcommand{\]}{\right]}
\renewcommand{\(}{\left(}
\renewcommand{\)}{\right)}

\newcommand{\e}{\mathrm{e}}
\DeclareMathOperator{\N}{\mathbb{N}}

\newtheorem{theorem}{Theorem}
\newtheorem{lemma}{Lemma}
\newtheorem{proposition}{Proposition}
\newdefinition{definition}{Definition}
\newdefinition{notation}{Notation}
\newdefinition{corollary}{Corollary}
\newdefinition{remark}{Remark}







\journal{Stochastic Processes and their Applications}
\begin{document}
\begin{frontmatter}
 \title{Hawkes and INAR($\infty$) Processes}
 \author{Matthias Kirchner}
 \ead{matthias.kirchner@math.ethz.ch}
\address{Department of Mathematics, ETH Zurich, Raemistrasse 101, 8092 Zurich, Switzerland}
\date{\today}

\begin{abstract}
In this paper, we discuss integer-valued autoregressive time series (INAR), Hawkes point processes, and their interrelationship. Besides presenting structural analogies, we derive a convergence theorem. More specifically, we generalize the well-known INAR($p$), $p\in\N$, time series model to a corresponding model of infinite order: the INAR($\infty$) model. We establish existence, uniqueness, finiteness of moments, and give formulas for the autocovariance function as well as for the joint moment-generating function. Furthermore, we derive an AR($\infty$), an MA($\infty$), and a branching-process representation for the model. We compare Hawkes process properties with their INAR($\infty$) counterparts. Given a Hawkes process $N$, in the main theorem of the paper we construct an INAR($\infty$)-based family of point processes and prove its convergence to $N$. This connection between INAR and Hawkes models will be relevant in applications.
\end{abstract}
\begin{keyword}
Hawkes process\sep integer-valued time series\sep weak convergence of point processes\sep branching process
%
\MSC 60G55 \sep 60F99 \sep 37M10 
\end{keyword}
\end{frontmatter}
\section*{Introduction}
\noindent In this paper, we show that Hawkes point processes are continuous-time versions of integer-valued autoregressive time series and---vice versa---that integer-valued autoregressive time series are discrete-time versions of Hawkes point processes; see Theorem~\ref{weak_convergence} for the main result of the paper. To start with, we outline the history of the concepts involved.
\par Standard time series theory for sequences of real-valued data points has been developed in seminal works like \citet{whittle51} and \citet{box70}. This theory led to the natural question of time series models for count data. In the count-data context, the starting point is also a defining system of difference equations of the form ``$X_n - \sum \alpha_k X_{n-k} =\varepsilon_n+ \sum \beta_k\varepsilon_{n-k},\, n\in\Z $''. The main idea of the construction is to manipulate these equations in such a way that their solutions are integer-valued. This can be achieved by 
giving the error terms ``$(\varepsilon_n)$'' a distribution supported on $\N_0$ and by substituting all multiplications with thinning-operations.
 For the latter, the thinning notation from \citet{steutel79} turns out to be relevant. In the above spirit, autoregressive integer-valued (INAR) time series were defined and examined by \citet{mckenzie85} and \citet{alosh87}. The modern definition of the INAR model comes from \citet{li91}. \citet{latour97} generalizes the model to the multivariate case. For an exhaustive collection of properties of the INAR model; see \citet{marques05}. For a general survey of integer-valued time series whose definitions involve a thinning operation; see \citet{weiss08}. For a textbook reference; see \citet{fokianos01}.
\par The Hawkes process was introduced in \citet{hawkes71a, hawkes71b} as a model for contagious processes such as measles infections or hijackings. As a point process in continuous time, the Hawkes process allows for the modeling of intensities which depend on the past of the process itself. Its alternative name, ``selfexciting point process", stems from the fact that, given the occurrence of an event, intensity jumps upwards and then decays gradually. Theoretical cornerstones for the model are 
 \citet{hawkes74} which establishes the representation as a cluster process,
 \citet{ogata88} which covers calibration issues and propagates a recursive method for likelihood calculations,
 \citet{bremaud96} which extends the original model by generalizing the affine dependence on the past to Lipschitz dependence,
 \citet{bremaud01} which proves the existence of a specific borderline case of the model,
  \citet{liniger09} which puts the subtleties of the definition and the construction on a solid and very detailed mathematical foundation---especially for the marked multivariate case, and
 \citet{errais10} which treats an important special parametrization of the model from a Markov process theory point of view.
For a textbook reference that covers many aspects of the Hawkes process; see \citet{daley03}.
\par 
To the best of our knowledge, the close connection between INAR and Hawkes processes has not been studied before. The correspondence between the model classes becomes even more direct if one applies infinite autoregression instead of finite autoregression for the time series model. This was our main motivation for generalizing the existing INAR($p$) framework with $p<\infty$ to the case $p=\infty$. For the new INAR($\infty$) model, we give an explicit construction and show uniqueness. Then we derive three alternative descriptions of the process, namely an autoregressive, a moving-average, and a branching-process representation. Furthermore, we calculate basic quantities such as the joint moment-generating function and the autocovariance function. These are mainly presented for comparison with their Hawkes process counterparts. 
We observe that the equivalent branching-construction of INAR and Hawkes models form the core of the connection. This equivalence yields corresponding equations for generating functions, similar moment structures, and analogous stability criteria. 
Theorem~\ref{weak_convergence} establishes a convergence result. In this theorem, for a given Hawkes process $N$, we construct a specific family of INAR($\infty$) sequences $\left\{\(X^{(\Delta)}_n\)_{n\in\Z}\right\}_{\Delta>0}$. From each member of this family, we derive a point process $N^{(\Delta)}$ by setting 
 $$
 N^{(\Delta)}\big((a,b]\big):=\sum_{n:\, \Delta n\in(a,b]} X^{(\Delta)}_n,\quad a<b.
 $$
The theorem states that $N^{(\Delta)}$ converges weakly to the Hawkes process $N$ when $\Delta$ goes to zero. This result is relevant for applications of INAR and Hawkes processes. In particular, the convergence theorem yields an estimation method for the Hawkes process by estimating the more tractable approximating INAR model instead. We work out this estimation method in \citet{kirchner15a}.
Moreover, from a purely theoretical point of view, the presented line of thought is useful: the time series perspective on point processes as well as the point process perspective on (integer-valued) time series can be fertile for constructing and understanding event-data models; see Section~\ref{Discussion}.
\par The paper is organized as follows: Section 1 introduces the  INAR($\infty$) model. Section 2 presents the Hawkes process in the random counting-measure framework. Section 3 establishes the convergence theorem. Furthermore, it collects structural analogies between the two model classes. In the final section, we conclude with a discussion on the broader interpretation of the INAR--Hawkes relation. 
\section{The INAR($\infty$) model}
\noindent Throughout the paper, we consider a basic complete probability space $\(\Omega, \mathcal{F}, \P\)$ carrying all random variables involved.
\subsection{Definition and existence}
\begin{definition}\label{thinning_operator}
For an $\N_0$-valued random variable $Y$ and a constant $\alpha\geq 0$, the \emph{thinning operator} $\circ$ is defined by
$$
\alpha \circ Y:=\sum\limits_{n=1}^Y\xi^{(\alpha)}_n,
$$ where $\xi^{(\alpha)}_n\iidsim \mathrm{Pois}(\alpha)$, $n\in\N, $ independently of $Y$. We refer to $\(\xi^{(\alpha)}_n\)$ as \emph{counting sequence}.
\end{definition}
In this definition and throughout the paper, we use the convention that $\sum_{n=1}^0a_n := 0$ for any sequence $(a_n)_{n\in\N}\subset \R$. 
\begin{definition} \label{INAR}
For $\alpha_k \geq 0,\, k \in\N_0$, let $\varepsilon_n  \iidsim\operatorname{Pois}(\alpha_0) ,\, n\in\mathbb{Z}$, and $\xi^{(n,k)}_{l}\sim \mathrm{Pois}\(\alpha_k\)$, independently over $n\in\Z,k\in\N$, $l\in\N$, and also independent of $(\varepsilon_n)$. An \emph{integer-valued autoregressive time series of infinite order} (INAR($\infty$)) is a sequence of random variables $(X_n)_{n\in\mathbb{Z}}$ which is a solution to the system of stochastic difference equations
\begin{align}
 \varepsilon_n &= X_n -  \sum\limits_{k=1}^\infty \alpha_k \circ X_{n-k} \label{sloppy}\\
 &:=  X_n -  \sum\limits_{k=1}^\infty \sum\limits_{l=1}^{X_{n-k}}\xi^{(n,k)}_{l} ,\quad n\in\mathbb{Z} \label{DE}.
\end{align}
 We call $\alpha_0$ \emph{immigration parameter}, $(\varepsilon_n)$ \emph{immigration sequence}, $\alpha_k \geq 0,\, k \in\N,$ \emph{reproduction coefficients}, and $K:=\sum_{k=1}^{\infty}\alpha_k$ \emph{reproduction mean}.
\end{definition}
\noindent In most situations, it is enough to use the thinning notation from~\eqref{sloppy} in Definition~\ref{INAR} without explicitely writing out the counting sequences as in~\eqref{DE}---keeping in mind that each ``$\circ$" operates independently over $k\in\N$ and $n\in\Z$. Clearly, Definitions~\ref{thinning_operator} and~\ref{INAR} depend on the choice of the distribution of the counting sequences. A more obvious option would have been sequences of Bernoulli variables. This has in fact been the choice in the cited INAR($p$) literature. The Poisson choice for the counting sequences. however, again yields a Poisson distribution for ``$X_n|X_{n-1},X_{n-2},\dots$''. This in turn leads to formulas that are simpler and that can be compared with their Hawkes counterparts more directly. We will address this issue in Sections~\ref{The Choice of The Counting Sequence Distribution} and~\ref{Discussion}. For the following existence and uniqueness result, any $\N_0$-valued distribution with finite first moments would do for the counting sequences. Throughout our paper, ``stationary'' is understood as ``strictly stationary''.

\begin{theorem}\label{existence}
Let $\alpha_k\geq 0,\, k \in\N_0$, with reproduction mean $K:=\sum_{k=1}^\infty \alpha_k<1$. Then  \eqref{DE} has an almost surely unique stationary solution $\(X_n\)_{n\in\Z}$, where $X_n\in\N_0,\, n\in\Z,$ and  
$
\E X_n  \equiv {\alpha_0}/({1 - K}),\, n\in\N.
$
\end{theorem}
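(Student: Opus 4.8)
The plan is to read the INAR($\infty$) recursion as a subcritical branching process with immigration and to build the stationary solution as the corresponding cluster sum. Concretely, I would set up a monotone Picard-type iteration whose $j$-th step counts, for each time $n$, the immigrants arriving at times $\le n$ together with all of their descendants down to generation $j$; letting $j\to\infty$ produces the minimal solution, and the condition $K<1$ guarantees that the expected total progeny of each immigrant is finite, which is exactly what makes the limit finite and stationary.

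For existence I would fix the immigration sequence $(\varepsilon_n)$ and the counting arrays $\xi^{(n,k)}_l$ once and for all, set $X^{(0)}_n:=\varepsilon_n$, and define recursively $X^{(j+1)}_n:=\varepsilon_n+\sum_{k\ge 1}\alpha_k\circ X^{(j)}_{n-k}$, where each thinning reuses the prescribed counting variables, i.e. $\alpha_k\circ X^{(j)}_{n-k}=\sum_{l=1}^{X^{(j)}_{n-k}}\xi^{(n,k)}_l$. Because all $\alpha_k\ge0$ and the counting variables are shared across iterations, raising an upper summation limit can only increase each sum, so $X^{(j)}_n\le X^{(j+1)}_n$ for every $n$; monotone convergence then yields an a.s.\ limit $X_n\in[0,\infty]$. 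Passing to the limit inside each thinning and inside the sum over $k$ (again by monotone convergence) shows that $(X_n)$ solves \eqref{DE}. Stationarity is inherited: each $X^{(j)}_n$ is one fixed measurable functional of the time-shifted noise array, that array is invariant under $n\mapsto n+1$, so $(X^{(j)}_n)_n$ and hence its limit are stationary.

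Finiteness of moments and the mean formula come from the linear recursion for expectations. Using Wald's identity $\E[\alpha_k\circ Y]=\alpha_k\,\E Y$ (valid in $[0,\infty]$ by Tonelli) together with stationarity gives $\E X^{(j+1)}_0=\alpha_0+K\,\E X^{(j)}_0$, whence $\E X^{(j)}_0=\alpha_0\sum_{i=0}^{j}K^i$. Since $K<1$ this increases to $\alpha_0/(1-K)$, and monotone convergence identifies $\E X_0=\alpha_0/(1-K)<\infty$; in particular $X_n<\infty$ a.s., so $(X_n)$ is a genuine $\N_0$-valued stationary solution with the asserted mean.

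The delicate part is uniqueness, and this is where I expect the main obstacle. The iteration already shows minimality: any $\N_0$-valued solution $Y$ satisfies $Y_n\ge X^{(0)}_n=\varepsilon_n$, and by induction $Y_n\ge X^{(j)}_n$, so $Y\ge X$. For a stationary solution $Y$ I would study $D_n:=Y_n-X_n\ge0$, which, using the shared counting variables and $Y_{n-k}\ge X_{n-k}$, solves the homogeneous recursion $D_n=\sum_{k\ge1}\sum_{l=X_{n-k}+1}^{Y_{n-k}}\xi^{(n,k)}_l$. The natural finish is a first-moment contraction: this recursion gives $\E D_0=K\,\E D_0$ with $K<1$, forcing $\E D_0=0$ and hence $Y=X$ a.s. The difficulty is that the argument presupposes $\E Y_0<\infty$, whereas a priori a stationary solution only satisfies the fixed-point identity $\mu=\alpha_0+K\mu$ in $[0,\infty]$, which the value $\mu=\infty$ also solves. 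Excluding infinite-mean (and possibly non-causal) stationary solutions is therefore the crux; I would handle it by unrolling the identity $Y_n=\varepsilon_n+\sum_k\alpha_k\circ Y_{n-k}$ to depth $j$, observing that the accumulated immigrant-and-descendant terms increase to $X_n$ while the generation-$j$ remainder has mean $K^j\,\E Y_0$, and then arguing that this remainder vanishes in probability, and along a subsequence a.s., so that $Y_n=X_n$. This reduces the whole uniqueness question to the subcritical control already provided by $K<1$.
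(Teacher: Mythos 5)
Your existence argument is correct and is, at bottom, the paper's own branching construction reorganized: your iterate $X^{(j)}_n$ is precisely the number of immigrants together with all their descendants through generation $j$, i.e.\ the partial sum over $g\leq j$ of the paper's generation processes $G^{(g,i,j)}_n$. The reorganization buys you something real, though. The paper builds generations and families with separately indexed counting arrays and must then re-index those arrays to verify that the superposition solves \eqref{DE}---a step the paper itself calls cumbersome and only sketches via the correspondence of index sets. Your Picard iteration consumes the prescribed variables $\xi^{(n,k)}_l$ directly (raising the upper summation limit at each step), so the monotone limit solves \eqref{DE} with respect to the given counting sequences with no relabeling at all. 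Your mean recursion $\E X^{(j+1)}_0=\alpha_0+K\,\E X^{(j)}_0$ plays exactly the role of the paper's identity $\E\sum_n G^{(g)}_n=K^g$, stationarity-from-shift-invariance matches the paper, and the minimality fact $Y\geq X$ for any solution $Y$ driven by the same noise is a nice structural observation the paper does not have.

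On uniqueness, your core argument---nonnegative difference $D_n=Y_n-X_n$ satisfying the homogeneous recursion, then the first-moment contraction $\E D_0=K\,\E D_0$---is the same as the paper's ($\E|X_n-Y_n|\leq K\,\E|X_n-Y_n|$ via the shared counting sequences), and you are more candid than the paper about what it needs: the paper simply asserts $\E|X_n-Y_n|<\infty$ without justification. However, your proposed repair of that gap is circular. The generation-$j$ remainder obtained by unrolling $Y$ has mean $K^j\,\E Y_0$, which equals $+\infty$ for every $j$ precisely in the case the repair is meant to cover, namely $\E Y_0=\infty$; so ``vanishes in probability'' does not follow, and the unrolling adds nothing beyond the finite-mean case already handled by the contraction. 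As written, your proof establishes existence and uniqueness within the class of stationary solutions with finite mean (and adapted to the noise, which both you and the paper need for the Wald step)---which is exactly what the paper's own proof establishes. So you are at parity with the paper: either drop the repair and state the finite-mean restriction explicitly, or find a genuinely new argument excluding infinite-mean stationary solutions; neither your sketch nor the paper contains one.
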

\noindent Before giving the proof, we highlight the branching nature of the solution to \eqref{DE}. As we will see, one can interpret this solution as a model for the size of a population, where each individual is alive exactly during one time-step. Furthermore, each individual is either an immigrant or stems from a prior individual. This is similar to a Galton--Watson framework with immigration; see Section 5 in \citet{seneta69}. In contrast to the Galton--Watson setup, however, each INAR($\infty$) individual does not only have offspring at the next time-step but (potentially) at any future time. 
%
The proof first formalizes this structure and then establishes that the construction indeed yields a process with the desired properties. We emphasize the branching intuition of family processes consisting of generation processes by the somewhat unusual but suggestive notation for stochastic processes $(F_n)$ and  $(G_n)$.
\begin{proof}
Let $\varepsilon_i\iidsim\text{Pois}\(\alpha_0\),\,  i\in\Z$,  be the immigration terms 
from Definition~\ref{INAR}.
For the $j$-th potential immigrant at time $i\in\Z$, we define generation processes $\big(G^{(g,i,j)}_n\big),\, g\in\N_0,$ by the following recursive procedure:
\begin{align}
G_{n}^{(0,i,j)} :=&\,1_{\{n = 0\}},\quad n\in\Z,\,i\in\Z,\,j\in\N\\
 G_{n}^{(g,i,j)}:=&\,\sum_{k=1}^{n} \alpha_k\circ G_{n-k}^{(g-1,i,j)}\label{G_n_sloppy}.
 \end{align}
For all distributional properties of the construction, it will be enough to apply defining equation \eqref{G_n_sloppy} for the generations. We will consider an explicit representation of the involved thinning sequences later. Note that $ G_{n}^{(g,i,j)} = 0$ whenever $n<0$. 
A family originating from the $j$-th individual immigrant at time $i$ is the superposition of all the corresponding generation-processes:
\begin{align}
F_{n}^{(i,j)} &:= \sum\limits_{g = 0}^\infty G_n^{(g,i,j)}, \quad n\in\Z,\, i\in\Z,\, j\in\N\label{F_n1}.
\end{align}
The candidate series $\(\tilde{X}_{n}\)$ for a solution of~\eqref{DE} is the superposition of all these families---modulo an appropriate shift in the time index:
\begin{align}
\tilde{X}_{n} &:= \sum\limits_{i = -\infty}^n \sum\limits_{j=1}^{\varepsilon_i} F_{n-i}^{(i,j)},\quad n\in\Z \label{tildeX}.
\end{align}
Note that only family and generation processes indexed by $(i,j)$ with $j\in \{1,\dots,\varepsilon_i\}$ come into play. 
Also note that all infinite series involved in the construction above are well-defined because their partial sums are nondecreasing.
As a first step, we remind ourselves that $K = \sum_{k=1}^\infty \alpha_k$  and establish 
 \begin{equation}
  \E\sum_{n=0}^\infty G_n^{(g,i,j)} = K^g \label{GSum},\quad  g\in\N_0,\,i\in\Z,\, j\in\N,
  \end{equation} by induction:
  for $g=0$,~\eqref{GSum} is correct because we have 
  $
  \E \sum_{n=0}^\infty G_{n}^{(0,i,j)} =  \sum_{n=0}^\infty 1_{\{n = 0\}} =1 = K^0.
  $
For $g>0$,  
 one can show that
$
  \E\sum_{n=0}^\infty G_n^{(g,i,j)} 
= K \cdot \E \sum_{n=0}^\infty G_{n}^{(g-1,i,j)}
$ 
and $\eqref{GSum}$ follows. 
By \eqref{GSum},
$\sum_{n=0}^\infty F_{n}^{(i,j)}$ has expectation $1/(1-K)$. In particular, this expectation is almost surely bounded. We conclude that $F_{n}^{(i,j)}\in\N_0,\,n\in\Z$, almost surely. For $\tilde{X}_{n},\, n\in\Z,$ we find that
\begin{align*}
&E \tilde{X}_{n}\\
& \stackrel{\mathrm{MC}}{=}\sum\limits_{i = -\infty}^n\E \sum\limits_{j=1}^{\varepsilon_i} F_{n-i}^{(i,j)}
\stackrel{\mathrm{WI}}{=}\alpha_0\sum\limits_{i = -\infty}^n\E F_{n-i}^{(i,1)}
\stackrel{\mathrm{MC}}{=}\alpha_0\sum\limits_{i = -\infty}^n \sum\limits_{g = 0}^\infty\E G_{n-i}^{(g,i,j)}
\stackrel{\mathrm{MC}}{=}\alpha_0 \sum\limits_{g = 0}^\infty\E \sum\limits_{i = -\infty}^nG_{n-i}^{(g,i,1)}
\stackrel{\eqref{GSum}}{=}\alpha_0 \sum\limits_{g = 0}^\infty K^g = \frac{\alpha_0}{1-K}.
\end{align*}
Consequently, $\tilde{X}_{n}$ is almost surely bounded and therefore almost surely $\N_0$-valued.  Note that, by construction, the generations $\(G_n^{(i,j)}\)$ and therefore the families $\(F_n^{(i,j)}\)$ are independently and identically distributed time series over $i\in\Z$ and $j\in\N$. Stationarity of $\(\tilde{X}_{n}\)$ then follows from  the i.i.d.\ property of the immigration sequence $\(\varepsilon_i\)$.
\par To show that our candidate sequence $\big(\tilde{X}_n\big)$ indeed solves \eqref{DE}, we have to work with an explicit representation for the thinnings involved in the~\eqref{G_n_sloppy}. To that aim, let 
\begin{align}
\xi_{g,i,j,m}^{(n,k)}\sim\mathrm{Pois}(\alpha_k),\quad\text{independently over}\quad i,n\in \Z\;\text{and}\;k, j,g,m\in\N.
\end{align}
In our branching terminology, $\xi_{g,i,j,m}^{(n,k)}$ denotes the number of offspring individuals at time $n$ whose parent lived at time $n-k$. 
This parent belongs to the $(g-1)$-th generation of family $(i,j)$. Furthermore, this parent is the $m$-th such individual. We repeat the defining recursion for the generation processes from above---this time we represent the involved counting sequences explicitly:
\begin{align}
G_{n}^{(0,i,j)} :=&\,1_{\{n = 0\}},\quad n\in\Z,\,i\in\Z,\,j\in\N\\
 G_{n}^{(g,i,j)}:=&\,\sum_{k=1}^{n} \alpha_k\circ G_{n-k}^{(g-1,i,j)}\label{G_n_sloppy_2}\\
 :=&\, \sum_{k=1}^{n} \sum_{m=1}^{G_{n-k}^{(g-1,i,j)}} \xi_{g,i,j,m}^{(i + n,k)},\quad n\in\Z,\,i\in\Z,\,\,j\in\N,\, g\in\N\label{G_n_2}.
 \end{align}
It is obvious that~\eqref{G_n_2} justifies the distributional assumptions on \eqref{G_n_sloppy_2} (i.e., \eqref{G_n_sloppy}), used in the first part of this proof. For any $n\in\Z$, we find
\begin{align}
\tilde{X}_n &= \sum\limits_{i=-\infty}^n\sum\limits_{j=1}^{\varepsilon_i}F_{n-i}^{(i,j)}
=\sum\limits_{i=-\infty}^{n-1}\sum\limits_{j=1}^{\varepsilon_i}F_{n-i}^{(i,j)} + \sum\limits_{j=1}^{\varepsilon_n}F_{0}^{(n,j)}=\sum\limits_{i=-\infty}^{n-1}\sum\limits_{j=1}^{\varepsilon_i} \sum\limits_{g = 0}^\infty G_{n-i}^{(g,i,j)}+\varepsilon_n. \label{third}
\end{align}
Note that the third summation really starts in $g=1$ because $G_{n-i}^{(0,i,j)} = 1_{\{n-i=0\}}=0$ whenever $i\leq n-1$. For the triple sum in \eqref{third}, we obtain
\begin{align}
\sum\limits_{i=-\infty}^{n-1}\sum\limits_{j=1}^{\varepsilon_i} \sum\limits_{g = 1}^\infty G_{n-i}^{(g,i,j)}&=\sum\limits_{i=-\infty}^{n-1}\sum\limits_{j=1}^{\varepsilon_i} \sum\limits_{g = 1}^\infty  \sum_{k=1}^{n-i} \sum_{m=1}^{G_{n-i-k}^{(g-1,i,j)}} \xi_{ g,i,j,m}^{(i+n-i,k)}
\nonumber\\
&= \sum\limits_{k=1}^\infty\(\sum\limits_{i=-\infty}^{n-k}\sum\limits_{j=1}^{\varepsilon_i} \sum\limits_{g = 1}^\infty \sum\limits_{m=1}^{G_{n-k-i}^{(g-1,i,j)}}  \xi_{g,i,j,m}^{(n,k)}\).\label{2nd_line}
\end{align}
For~\eqref{2nd_line}, we use the fact that it is irrelevant whether we let $k$ run over $\{1,2,\dots, n-i\}$ or over $\N$ because $G_{n-i-k}^{(g-1,i,j)}=0$ whenever $k> n-i$; by the same argument, we may let $i$ run up to $n-k$ only. For fixed $n\in\Z$ and $k\in\N$, the term in the bracket is a sum of i.i.d. Pois($\alpha_k$) random variables $\xi_{n,j,g,m}^{(n+i,k)}$ over the (stochastic) index set 
$$
I^{(n,k)} := \bigg\{(g,i,j,m)\in\Z^4:\,1\leq g,\,   i\leq n-k,\, 1\leq j\leq \varepsilon_i,\,1\leq m\leq G_{n-k-i}^{(g-1,i,j)}\bigg\}.
$$
For the size of $I^{(n,k)}$, we obtain
\begin{align}
\left|I^{(n,k)}\right|=\sum\limits_{i=-\infty}^{n-k}\sum\limits_{j=1}^{\varepsilon_i} \sum\limits_{g = 1}^\infty G_{n-i-k}^{(g-1,i,j)}\stackrel{\eqref{F_n1}}{=}\sum\limits_{i=-\infty}^{n-k}\sum\limits_{j=1}^{\varepsilon_i} F_{n-i-k}^{(i,j)}\stackrel{\eqref{tildeX}}{=}\tilde{X}_{n-k} \quad(<\infty,\, a.s).\label{domain_size}
\end{align}
Let $(\xi_m^{(n,k)})$ be the counting sequences from Definition~\ref{INAR}.
Note that, for $n\in\Z$ and $k\in\N$,
\begin{align}
\Big(\xi_l^{(n,k)}:\, l= 1,2,\dots,{\tilde{X}_{n-k}}^{(n,k)}\Big)\quad\text{and}
\quad
\Big(\xi^{(n,k)}_{g,i,j,m}:\,(g,i,j,m)\in I^{(n,k)}\Big)\label{index_sets}
\end{align}
are equally distributed---no matter which order we choose for the second set. Also note that, for $(n,k)\neq(n',k')$, we have that $I^{(n,k)}\cap I^{(n',k')}= \emptyset$. 
Consequently, the independence properties over $n$ and $k$ necessary for the thinnings are preserved. We did indeed make correspondence \eqref{index_sets} explicit. The reordering of the counting sequences, however, is cumbersome. It involves a function that is recursively defined on multiple levels; its presentation would double the length of the whole proof---yielding hardly additional insight at that. So we chose to leave it with~\eqref{index_sets}. Continuing with~\eqref{third}, we obtain that, for $n\in\Z$,
\begin{align*}
 \tilde{X}_{n} &\stackrel{\eqref{third}}{=}\sum\limits_{i=-\infty}^{n-1}\sum\limits_{j=1}^{\varepsilon_i} \sum\limits_{g = 0}^\infty G_{n-i}^{(g,i,j)}+\varepsilon_n\stackrel{\eqref{2nd_line}}{=}\sum\limits_{k=1}^{\infty}\sum\limits_{(g,i,j,m)\in I^{(n,k)}}  \xi^{(n,k)}_{g,i,j,m} + \varepsilon_n
 = \sum\limits_{k=1}^{\infty}\sum\limits_{l=1}^{|I^{(n,k)}|} \xi^{(n,k)}_{l} + \varepsilon_n
 \stackrel{\eqref{index_sets}}{=}  \sum\limits_{k=1}^{\infty}\sum\limits_{l=1}^{\tilde{X}_{n-k}} \xi^{(n,k)}_{l} + \varepsilon_n.
\end{align*}
We conclude that $\(\tilde{X}_{n}\)$ indeed solves \eqref{DE}.
\par For uniqueness, consider two stationary solutions $X, Y$ of \eqref{DE}---naturally with respect to the same counting sequences $\(\xi^{(n,k)}_l\),\, n\in\mathbb{Z},\,k\in\N$. Then 
\begin{align}
\E\big|X_n - Y_n\big| &\stackrel{\eqref{sloppy}}{=} \E\left|\sum\limits_{k=1}^\infty \(\alpha_{k} \circ X_{n-k} - \alpha_{k}\circ Y_{n-k}\)\right| \leq  \E\sum\limits_{k=1}^\infty \big| \alpha_{k} \circ X_{n-k} - \alpha_{k}\circ Y_{n-k}\big| \stackrel{\mathrm{MC}}{=}  \sum\limits_{k=1}^\infty \E\big| \alpha_{k} \circ X_{n-k} - \alpha_{k}\circ Y_{n-k}\big|. \label{abs_diff}
\end{align}
As  $X_{n-k}$ and $Y_{n-k}$ are thinned with respect to the same counting sequence, we have that
\begin{equation}
\big| \alpha_{k} \circ X_{n-k} - \alpha_{k}\circ Y_{n-k}\big| = \left|\sum\limits_{i=1}^{X_{n-k} }\xi_{i}^{(n,k)} -  \sum\limits_{i=1}^{Y_{n-k}} \xi_{i}^{(n,k)}\right| 
\stackrel{\mathrm{d}}{=} \alpha_k\circ \big| X_{n-k} - Y_{n-k}\big|,\quad k\in\N.\label{dbn_eq}
\end{equation}
Plugging \eqref{dbn_eq} in \eqref{abs_diff}, we obtain
\begin{align*}
\E\big|X_n - Y_n\big|\,{\leq}\,  \sum\limits_{k=1}^\infty \E\Big[ \alpha_{k} \circ \big|  X_{n-k} - Y_{n-k}\big|\Big] 
&\stackrel{\text{WI}}{=}  \sum\limits_{k=1}^\infty \alpha_{k} \E \big|  X_{n-k} - Y_{n-k}\big| \stackrel{\mathrm{stat.}}{=} K \E \big|  X_{n} - Y_{n}\big| ,\quad n\in\Z.
\end{align*}
As $K<1$ by assumption and $\E|X_n - Y_n|<\infty$, we get that $\E|X_n - Y_n| = 0$ and therefore $X_n = Y_n$, $n \in \Z$, almost surely.

\end{proof}
\subsection{Alternative representations}
\noindent Surprisingly, we can explicitly represent the INAR($\infty$) model as a standard AR($\infty$) model with uncorrelated errors:

\begin{proposition}\label{AR}
Let $\alpha_k\geq 0,\,k \in\N_0,$ with $K=\sum_{k=1}^\infty \alpha_k <1$, and let $\(X_n\)$ be the corresponding INAR($\infty$) process; see Definition~\ref{INAR}. Then
\begin{equation}
u_n:=X_n - \sum\limits_{k=1}^{\infty}\alpha_kX_{n-k} - \alpha_0,\quad n\in\Z, \label{AR-DEalt}
\end{equation}
defines a stationary sequence  $\(u_n\)$  with $\E u_n\equiv 0,\, n\in\N,$ and
\begin{align}\label{WN_cov}
\E\[u_n u_{n'}\] =
\begin{cases}
0,& \quad n\neq n'\text{,}\\
\frac{\alpha_0}{1-K}, &\quad n=n'.
\end{cases}
\end{align}
Furthermore, we have that
\begin{equation}
\(X_n-\mu_X\) -  \sum\limits_{k=1}^\infty \alpha_k \( X_{n-k}-\mu_X\) =  u_n,\quad n\in\mathbb{Z} \label{AR-DE},
\end{equation}
where $\mu_X:=\E X_0 = \alpha_0/(1-K)$. In other words, $\(u_n\)$ is a (dependent) white-noise sequence and the time series $\(X_n-\mu_X\)_{n\in\Z}$ can be described in terms of a solution to an ordinary AR($\infty$) system of difference equations.
\end{proposition}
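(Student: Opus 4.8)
The plan is to substitute the defining thinning equation \eqref{DE} directly into the definition \eqref{AR-DEalt} of $u_n$, turning it into a sum of centered noise terms. Writing $X_n = \varepsilon_n + \sum_{k=1}^\infty \alpha_k\circ X_{n-k}$ and subtracting the \emph{ordinary} products $\alpha_k X_{n-k}$ together with $\alpha_0=\E\varepsilon_n$ yields the decomposition
$$u_n = (\varepsilon_n - \alpha_0) + \sum_{k=1}^\infty\big(\alpha_k\circ X_{n-k} - \alpha_k X_{n-k}\big),$$
so that each summand is a thinning minus its conditional mean. Before anything else I would record that $u_n\in L^1$: since $\E X_{n-k}\equiv\mu_X$ by Theorem~\ref{existence} and all terms are nonnegative, $\E\sum_{k=1}^\infty\alpha_k X_{n-k}=K\mu_X<\infty$, so each of the three pieces in \eqref{AR-DEalt} is integrable and the series converges almost surely and in $L^1$.

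Next I would set up the filtration $\mathcal{F}_m:=\sigma\big(\varepsilon_i,\xi^{(i,k)}_l:\,i\le m,\,k,l\in\N\big)$. The branching construction in the proof of Theorem~\ref{existence} shows that $X_{n-k}$ is $\mathcal{F}_{n-1}$-measurable for every $k\ge1$, whereas the fresh noise $\varepsilon_n$ and the counting sequences $\xi^{(n,k)}_l$ entering $u_n$ are independent of $\mathcal{F}_{n-1}$. Since $\xi^{(n,k)}_l$ has mean $\alpha_k$, conditioning on $\mathcal{F}_{n-1}$ (on which $X_{n-k}$ is known) gives $\E\big[\alpha_k\circ X_{n-k}\mid\mathcal{F}_{n-1}\big]=\alpha_k X_{n-k}$ and $\E[\varepsilon_n-\alpha_0\mid\mathcal{F}_{n-1}]=0$, whence $\E[u_n\mid\mathcal{F}_{n-1}]=0$; the tower property then yields $\E u_n=0$. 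This martingale-difference property settles the off-diagonal covariances at once: for $n'<n$ the variable $u_{n'}$ is $\mathcal{F}_{n-1}$-measurable, so $\E[u_nu_{n'}]=\E\big[u_{n'}\,\E[u_n\mid\mathcal{F}_{n-1}]\big]=0$. Stationarity of $(u_n)$ follows because $u_n$ is obtained by one fixed, shift-equivariant measurable map applied to the i.i.d.\ driving noise (through which the stationary solution $(X_n)$ is itself determined).

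For the diagonal term I would compute $\E[u_n^2]=\E\big[\Var(u_n\mid\mathcal{F}_{n-1})\big]$, which is legitimate because $\E[u_n\mid\mathcal{F}_{n-1}]=0$. Conditionally on $\mathcal{F}_{n-1}$ the increments $\varepsilon_n-\alpha_0$ and the thinnings $\alpha_k\circ X_{n-k}-\alpha_k X_{n-k}$, $k\ge1$, are independent and centered, being built from mutually independent Poisson sources that are independent of $\mathcal{F}_{n-1}$; hence the cross terms vanish and, using $\Var\big(\xi^{(n,k)}_l\big)=\alpha_k$,
$$\Var(u_n\mid\mathcal{F}_{n-1}) = \alpha_0 + \sum_{k=1}^\infty \alpha_k X_{n-k}.$$
Taking expectations and inserting $\E X_{n-k}\equiv\mu_X=\alpha_0/(1-K)$ gives $\E[u_n^2]=\alpha_0+K\mu_X=\alpha_0/(1-K)$, which also confirms $u_n\in L^2$ a posteriori. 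Finally, the AR($\infty$) representation \eqref{AR-DE} is pure algebra: expanding $(X_n-\mu_X)-\sum_{k}\alpha_k(X_{n-k}-\mu_X)$ produces the constant $-\mu_X(1-K)=-\alpha_0$, so the left-hand side equals $X_n-\sum_k\alpha_k X_{n-k}-\alpha_0=u_n$.

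I would flag the measure-theoretic bookkeeping as the only genuine obstacle: one must verify carefully, by reading off the branching construction, that $X_{n-k}$ is $\mathcal{F}_{n-1}$-measurable while the time-$n$ noise is independent of $\mathcal{F}_{n-1}$, and one must justify the two interchanges of expectation with the infinite sum over $k$. Both interchanges are handled by monotone convergence, since every term is nonnegative (the reproduction coefficients are nonnegative and the variables $\N_0$-valued) once one conditions on the past, so no delicate domination argument is required.
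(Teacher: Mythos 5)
Your proof is correct, and it takes a genuinely cleaner route than the paper's, although both rest on the same probabilistic core: conditionally on the past, the time-$n$ immigration and counting variables are independent of everything earlier, and each thinning $\alpha_k\circ X_{n-k}$ has conditional mean \emph{and} conditional variance $\alpha_k X_{n-k}$. The paper never introduces a filtration: it first asserts that $u_n$ is uncorrelated with every earlier value $X_{n'}$, $n'<n$, kills the off-diagonal covariances by expanding $u_{n'}$ term by term, and then obtains $\E [u_n^2]$ by a lengthy expansion of the square of the thinning representation of $X_n$, conditioning at the end on $\sigma\(X_{n-k},\,k\in\N\)$ so that the surviving terms reduce to $\Var\(\sum_{k}\alpha_k\circ x_{n-k}\)=\sum_k\alpha_k x_{n-k}$. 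Your martingale-difference formulation --- $u_n=(\varepsilon_n-\alpha_0)+\sum_k\(\alpha_k\circ X_{n-k}-\alpha_k X_{n-k}\)$ with $\E[u_n\mid\mathcal{F}_{n-1}]=0$ --- gets the off-diagonal case in one line via the tower property and replaces the brute-force expansion by the law of total variance; the price is that you must check that $X_{n-k}$ is measurable with respect to the noise filtration $\mathcal{F}_{n-1}$, which does follow from the branching construction (the paper needs the parallel, equally construction-dependent fact that the time-$n$ counting sequences are independent of $\sigma\(X_{n-k},\,k\in\N\)$). Two points to tighten, neither fatal and both present at the same level of rigor in the paper itself: (i) the tower-property step $\E[u_nu_{n'}]=\E\big[u_{n'}\E[u_n\mid\mathcal{F}_{n-1}]\big]$ presupposes $u_nu_{n'}\in L^1$, so the diagonal computation (which yields $u_n\in L^2$) should logically precede it, followed by Cauchy--Schwarz; (ii) monotone convergence alone does not justify countable additivity of the conditional variance, since the centered summands are signed --- the clean fix is to note that, conditionally on $\mathcal{F}_{n-1}$, $\sum_k\alpha_k\circ X_{n-k}$ is a sum of independent Poisson variables with summable means and is therefore itself Poisson with parameter $\sum_k\alpha_k X_{n-k}$, which delivers the conditional variance directly (the paper glosses the same issue as ``standard arguments'').
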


\begin{proof}
The sequence values $u_n, \,n\in\Z,$ are well-defined because the partial sums of $\sum_{k=1}^\infty \alpha_{k} X_{n-k}$ are nondecreasing and their expectations have a finite limit. Stationarity of $\(u_n\)$ follows from stationarity of $\(X_n\)$. For the expectation, we find
$$
\E u_n = \E\[X_n -\alpha_0 - \sum_{k=1}^\infty \alpha_{n-k} X_{n-k}\] \stackrel{\mathrm{MC}}{=} \mu_X -\alpha_0- K \mu_X = 0.
$$ 
For the autocovariances of the errors, first of all note that the errors $u_n$ are uncorrelated with any previous value $X_{n'},\, n' <n,$ of the INAR($\infty$) sequence.
So that, for $n'<n$ (and then, by symmetry, for $n'\neq n$),
\begin{align*}
\E\[u_n u_{n'}\]&= \E\[u_n \(X_{n'}  - \alpha_0 - \sum\limits_{k=1}^\infty \alpha_k X_{n'-k} \)\]
\stackrel{\mathrm{MC}}{=}\E\[u_n X_{n'}\]  - \E\[u_n \alpha_0\]  - \sum\limits_{k=1}^\infty \alpha_k \E\[u_nX_{n'-k}\]{=}  - \E\[u_n\] \alpha_0 = 0.
\end{align*}
Since $\E u_n=0$ and $\E\[u_nX_{n-k}\] = 0$, for $k\in\N$, we obtain 
\begin{align*}
&\E\[u_n^2\] \\&= \E\[u_n\(X_n - \alpha_0 -\sum\limits_{k=1}^\infty\alpha_kX_{n-k}\)\]\\
& = \E\[u_nX_n\] \\
&= \E\[u_n\(\varepsilon_n+  \sum\limits_{k=1}^\infty \alpha_k\circ X_{n-k}\)\]\\
& =  \E\[\(X_n - \alpha_0 - \sum\limits_{k=1}^\infty\alpha_k X_{n-k}\)\(\varepsilon_n+   \sum\limits_{k=1}^\infty \alpha_k\circ X_{n-k}\)\]\\
& =   \E\[\(\varepsilon_n +   \sum\limits_{k=1}^\infty \alpha_k\circ X_{n-k} - \alpha_0 - \sum\limits_{k=1}^\infty\alpha_k X_{n-k}\)\(\varepsilon_n+   \sum\limits_{k=1}^\infty \alpha_k\circ X_{n-k}\)\]\\
&= \E\Bigg[\varepsilon_n^2
+\varepsilon_n \sum\limits_{k=1}^\infty \alpha_k\circ X_{n-k}
 +  \varepsilon_n\sum\limits_{k=1}^\infty \alpha_k\circ X_{n-k}
+ \(\sum\limits_{k=1}^\infty \alpha_k\circ X_{n-k}\)^2 \\
& \phantom{ \E[\varepsilon_n}
 - \alpha_0 \varepsilon_n   
 - \alpha_0  \sum\limits_{k=1}^\infty \alpha_k\circ X_{n-k}
  - \varepsilon_n\sum\limits_{k=1}^\infty\alpha_k X_{n-k}
    - \(\sum\limits_{k=1}^\infty\alpha_k X_{n-k}\)\(\sum\limits_{k=1}^\infty \alpha_k\circ X_{n-k}\)   
\Bigg].
\end{align*}
Using independence of $\varepsilon_n$ from the past of the process and using $\E\varepsilon_n= \Var(\varepsilon_n) = \alpha_0$, we get that 
$$
\E\[u_n^2\] = \alpha_0 + \E\[\(\sum\limits_{k=1}^\infty \alpha_k\circ X_{n-k}\)^2 - \sum\limits_{k=1}^\infty \alpha_kX_{n-k}\(\sum\limits_{k=1}^\infty \alpha_k\circ X_{n-k}\)\].
$$
For the expectation part in the latter equation, we condition the difference in the expectation on $\sigma\(X_{n-k},\, k\in\N\)$, the past of the process. Then the thinnings become the only source of randomness. Note that the counting sequences of the involved thinnings  only feed into $X_n$ and that they are independent of the past values $X_{n-1}, X_{n-2}, \dots$; see Definition~\ref{INAR}. From this independence, we obtain
\begin{align}
& \E\[\(\sum\limits_{k=1}^\infty \alpha_k\circ X_{n-k}\)^2 - \sum\limits_{k=1}^\infty \alpha_kX_{n-k}\(\sum\limits_{k=1}^\infty \alpha_k\circ X_{n-k}\)\Bigg|\sigma\big(X_{n-k},\, k\in\N\big)\]\nonumber\\
&\hspace{6cm}= \E\[\(\sum\limits_{k=1}^\infty \alpha_k\circ x_{n-k}\)^2 - \sum\limits_{k=1}^\infty \alpha_kx_{n-k}\(\sum\limits_{k=1}^\infty \alpha_k\circ x_{n-k}\)\]_{x_{n-k} = X_{n-k},\, k\in\N}
\nonumber
\end{align}
for the conditional expectations. So in a first step, we take the unconditional expectation with respect to the thinnings only and treat the $X_{n-k}$ variables as deterministic. Observe that, for deterministic $\(x_{n-k}\)_{k\in\N}\subset \N_0$ such that $\sum_{k=1}^\infty\alpha_kx_{n-k}<\infty$, we have that

\begin{align*}
\E\[  \(\sum\limits_{k=1}^\infty \alpha_k\circ x_{n-k}\)^2 - \sum\limits_{k=1}^\infty \alpha_kx_{n-k}\(\sum\limits_{k=1}^\infty \alpha_k\circ x_{n-k}\)\] 
{=}\E\[\(\sum\limits_{k=1}^\infty \alpha_k\circ x_{n-k}\)^2\] - \E\[\sum\limits_{k=1}^\infty \alpha_k\circ x_{n-k}\]^2 
= \Var\( \sum\limits_{k=1}^\infty \alpha_k\circ x_{n-k}\).
\end{align*}
 One can show by standard arguments that the variance of these series is the limit of the  variances of the partial sums, hence
\begin{align}
\Var\( \sum_{k=1}^\infty \alpha_k\circ x_{n-k}\) =\sum_{k=1}^\infty \Var( \alpha_k\circ x_{n-k}) = \sum_{k=1}^\infty \alpha_k x_{n-k} \quad(<\infty).\label{var}
\end{align}
For the last equality, we use that, for $x\in\N_0$ and $\alpha\geq 0$, $\alpha \circ x$ is the sum of $x$ independent random variables with distribution $\mathrm{Pois}(\alpha)$; see Definition~\ref{thinning_operator}. As $\sum_{k=1}^\infty \alpha_k X_{n-k}$ is almost surely finite, we may average \eqref{var} over the possible values of $X_{n-k}$ and conclude
\begin{align*}
\E\[u_n^2\]  =\alpha_0 + \E\[\Var\( \sum_{k=1}^\infty \alpha_k\circ x_{n-k}\)_{(x_n) = (X_n)}\]  =  \alpha_0 +  \E\sum\limits_{k=1}^\infty \alpha_k X_{n-k}\stackrel{\text{MC}}{=}\alpha_0 + K\mu_X = \frac{\alpha_0}{1-K}.
\end{align*}
This establishes~\eqref{WN_cov}. Equation \eqref{AR-DE} is a simple algebraic transformation of \eqref{AR-DEalt}.
\end{proof}
\noindent The third description of the INAR($\infty$) model is the representation as a standard MA($\infty$) time series. It will be most helpful for establishing the second-order properties of the process.
\begin{proposition} \label{MA}
Let $\alpha_k\geq 0,\, k\in\N_0,$ with $\sum_{k=1}^\infty\alpha_k < 1$. Then the corresponding INAR($\infty$) process from Definition~\ref{INAR} is a solution to the family of equations
\begin{equation}
X_n- \mu_X = \sum\limits_{k=0}^\infty \beta_k u_{n-k},\quad n\in\Z, \label{MA-DE}
\end{equation}
where $\(u_n\)$ is the white-noise sequence from Proposition~\ref{AR}, $\mu_X: = \E X_0 = \alpha_0/(1-K)$, $\beta_0:= 1$, and $\beta_k:= \sum_{i=1}^k \alpha_i \beta_{k-i},\; k\in\N_0$. Furthermore, $\beta_k\geq 0$, $k\in\N_0,$ and $\sum_{k=0}^\infty \beta_k = 1/(1-K)<\infty$.
\end{proposition}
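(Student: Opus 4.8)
The plan is to read \eqref{MA-DE} as the formal inversion of the autoregressive equation \eqref{AR-DE}. Writing $A(z) = 1 - \sum_{k\geq 1}\alpha_k z^k$ and $B(z) = \sum_{k\geq 0}\beta_k z^k$, the recursion $\beta_k = \sum_{i=1}^k \alpha_i\beta_{k-i}$ together with $\beta_0 = 1$ is exactly the Cauchy-product identity encoding $A(z)B(z) = 1$. So the coefficients $(\beta_k)$ are designed to turn the AR($\infty$) description of Proposition~\ref{AR} into an MA($\infty$) one, and the work is to make this inversion rigorous at the level of the random sequences.

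First I would dispatch the two deterministic claims about $(\beta_k)$. Nonnegativity follows by induction on $k$: $\beta_0 = 1 \geq 0$, and if $\beta_0,\dots,\beta_{k-1}\geq 0$ then $\beta_k = \sum_{i=1}^k\alpha_i\beta_{k-i}\geq 0$ since each $\alpha_i\geq 0$. For the sum I would work with the partial sums $S_N := \sum_{k=0}^N\beta_k$. Reordering the finite nonnegative double sum gives $S_N = 1 + \sum_{i=1}^N\alpha_i S_{N-i} \leq 1 + K S_N$, so $S_N \leq 1/(1-K)$ for every $N$; as $S_N$ is nondecreasing it converges to some $S\leq 1/(1-K)<\infty$. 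Passing to the limit in $S_N = 1 + \sum_{i=1}^N\alpha_i S_{N-i}$ by monotone convergence yields $S = 1 + KS$, and since $K<1$ this forces $S = 1/(1-K)$.

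Next I would introduce the candidate moving-average process $Z_n := \sum_{k=0}^\infty \beta_k u_{n-k}$. Since $\sum_k\beta_k<\infty$ and, by \eqref{WN_cov}, $u_0$ has finite variance and hence $\E|u_0|<\infty$, the series converges in $L^1$ (indeed $L^2$); thus $Z_n$ is well-defined with mean zero and is stationary because $(u_n)$ is. The heart of the proof is to verify that $Z$ solves \eqref{AR-DE}. I would compute $\sum_{k\geq 1}\alpha_k Z_{n-k} = \sum_{k\geq 1}\sum_{j\geq 0}\alpha_k\beta_j u_{n-k-j}$ and reorder the double series by collecting, for each $m\geq 1$, the coefficient $\sum_{k=1}^m\alpha_k\beta_{m-k} = \beta_m$ of $u_{n-m}$. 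The reordering is justified by absolute $L^1$-convergence, since $\sum_{k\geq 1}\sum_{j\geq 0}\alpha_k\beta_j\,\E|u_0| = \E|u_0|\,K/(1-K)<\infty$, so that Fubini applies. This gives $\sum_{k\geq 1}\alpha_k Z_{n-k} = \sum_{m\geq 1}\beta_m u_{n-m}$, and therefore $Z_n - \sum_{k\geq 1}\alpha_k Z_{n-k} = \beta_0 u_n = u_n$, i.e.\ $Z$ satisfies \eqref{AR-DE}.

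It remains to identify $Z$ with $X - \mu_X$. Both are stationary, integrable solutions of \eqref{AR-DE} driven by the same white noise $(u_n)$, so their difference $D_n := (X_n - \mu_X) - Z_n$ is a stationary integrable solution of the homogeneous equation $D_n = \sum_{k\geq 1}\alpha_k D_{n-k}$. Taking expected absolute values and using stationarity exactly as in the uniqueness part of Theorem~\ref{existence} gives $\E|D_n| \leq \sum_{k\geq 1}\alpha_k\,\E|D_{n-k}| = K\,\E|D_n|$; since $K<1$ and $\E|D_n|<\infty$, this forces $\E|D_n| = 0$, so $X_n - \mu_X = Z_n = \sum_{k\geq 0}\beta_k u_{n-k}$ almost surely, which is \eqref{MA-DE}. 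I expect the main obstacle to be the middle step, namely justifying the interchange of summation in the double series and confirming that the $\beta$-recursion reproduces precisely the coefficient $u_n$ on the right-hand side, so that $Z$ genuinely solves \eqref{AR-DE}; the remaining pieces are elementary coefficient bookkeeping and the by-now-familiar contraction-in-$K$ argument.
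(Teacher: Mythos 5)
Your proof is correct, but it takes a genuinely different route from the paper's. The paper argues at the level of power series and lag operators: it shows $\phi(z)=1-\sum_{k\geq 1}\alpha_k z^k$ is bounded away from zero on the closed unit disc, defines $\psi=1/\phi$, reads off the $\beta$-recursion by comparing coefficients, and then---this is precisely the step you avoid---invokes Wiener's lemma on absolutely convergent Fourier series to get $\sum_k|\beta_k|=1/(1-K)<\infty$, after which the representation $X_n-\mu_X=\psi(B)u_n$ is justified by citing Proposition 3.1.2 of Brockwell and Davis. You instead exploit nonnegativity of the $\beta_k$ from the outset: the partial-sum identity $S_N=1+\sum_{i=1}^N\alpha_i S_{N-i}$ plus monotone convergence gives $\sum_k\beta_k=1/(1-K)$ elementarily, so the Fourier-analytic machinery (which the paper's preamble presents as the essential subtlety of the $p=\infty$ case) becomes unnecessary once one notes that absolute and plain summability coincide here; and in place of the operator calculus you build the candidate process $Z_n=\sum_k\beta_k u_{n-k}$, check by an $L^1$-Fubini rearrangement that it satisfies \eqref{AR-DE}, and identify $Z_n$ with $X_n-\mu_X$ by recycling the contraction-in-$K$ uniqueness argument from Theorem~\ref{existence}. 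The paper's route buys brevity (given the cited literature) and an argument that would survive signed coefficients; yours buys a self-contained proof whose only input is Proposition~\ref{AR}, and it makes explicit the identification step the paper delegates to a citation. One point to tighten: when you pass to the difference $D_n$ and use $\E|D_{n-k}|=\E|D_n|$, individual stationarity of $(X_n-\mu_X)$ and $(Z_n)$ does not suffice; you should note that $Z_n$ is the same measurable functional of $(X_{n-j})_{j\geq 0}$ for every $n$ (through $(u_n)$), so the pair is jointly stationary---or, alternatively, bound $\sup_n\E|D_n|\leq K\sup_n\E|D_n|<\infty$ and conclude without exact constancy.
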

\noindent Before giving the proof, we note that the moving-average coefficients $(\beta_k)_{k\in\N_0}$ defined above correspond to $\(\E F^{(i,j)}_k\)_{k\in\N_0}$, the expected values of a single family-process after $k = 0, 1, 2, \dots $ time steps; see \eqref{F_n1}. Also note that Proposition~\ref{MA} above is not a mere corollary of standard results like Theorem 3.1.1 of \citet{brockwell91}, stating that ARMA($p,q$) processes ($p,q<\infty$) are MA($\infty$) processes. The argumentation in our case, i.e., $p=\infty$, has to be more subtle: we have to prove that the (in general infinitely many) zeros of the involved power series can be bounded away from the unit circle.

\begin{proof}
Let $B$ be the backward shift operator defined by $B^kx_n:=x_{n-k},\, k\in \Z$, for any sequence $(x_n)_{n\in\Z}$.
Consider the power series $\phi\(z\):=1-\sum_{k=1}^\infty\alpha z^k$. With these notations, we may rewrite \eqref{AR-DE} as
$
\phi(B) \(X_n - \mu_X\) = u_n,\, n\in\Z,
$
where $\(u_n\)$ is the white-noise sequence from Proposition~\ref{AR}. 
 Note that $0 < |\phi(z)| < 2$ for $|z|\leq 1$. Indeed, for $|z|\leq 1$,
$$
|\phi(z)|=\left|1-\sum_{k=1}^\infty\alpha_k z^k\right| \geq 1-\left|\sum_{k=1}^\infty\alpha_k z^k\right| \geq 1-\sum_{k=1}^\infty\alpha_k |z|^k \geq 1-K > 0.
$$
and
$$
|\phi(z)|= \left|1-\sum_{k=1}^\infty\alpha_k z^k\right| \leq 1+ \left|\sum_{k=1}^\infty\alpha_k z^k\right| \leq 1+\sum_{k=1}^\infty\alpha_k |z|^k \leq 1+K < 2.
$$
As $\phi(z)\neq 0$ for $|z| \leq 1$, we may define the function
$
\psi(z):=1/\phi(z),\, |z| \leq 1.
$
The original function $\phi$ is analytic on $|z| < 1$, so $\psi$ is also analytic on the open unit-disc and, consequently, we have a power series representation
$\psi(z) = \sum_{k=0}^\infty\beta_k z^k,\, |z| < 1$.  As $1= \psi(z)\phi(z)$ by definition, if follows that, for $|z| < 1$, 
\begin{equation}
1=\sum_{k=0}^\infty\beta_k z^k\(1 - \sum_{l=1}^\infty\alpha_l z^l\) = \sum_{k=0}^\infty\(\beta_k -\sum\limits_{j=1}^k \alpha_j\beta_{k-j}\)z^k. \label{beta}
\end{equation}
Comparing coefficients in~\eqref{beta}, one obtains the recursion 
$$
\beta_0 = 1\quad\text{ and}\quad \beta_k =  \sum_{j=1}^k \alpha_j\beta_{k-j},\quad k\in\N. 
$$
We note that $\beta_{k}\geq 0$ because  $\alpha_k\geq 0$. Formally, we can write
\begin{equation}
X_n - \mu_X = \psi(B) u_n,\quad n\in\Z.\label{formal}
\end{equation}
For the well-definedness of the right-hand side of this equation, it suffices to show that $\sum_{k=0}^\infty|\beta_k| < \infty$; see Proposition 3.1.2 in \citet{brockwell91}. To that aim we apply Lemma IIc.\ of \citet{wiener32}.
Let $
\tilde{\phi}(\theta) = 1 - \sum_{k=1}^\infty \alpha_ke^{ik\pi \theta},\, \theta\in (-\pi, \pi].
$
The lemma states that if  $ \sum_{k=1}^\infty |\alpha_k|<1$, then the Fourier series of
 the function $1/\tilde{\phi}(\theta)$ is absolutely convergent. By the same calculation as in \eqref{beta}, we find that the Fourier-coefficients of  $1/\tilde{\phi}(\theta)$ are exactly the $\beta_k,\, k\in \N_0$, from our $\psi$ function. From this we get
 $$
 \sum_{k=0}^\infty|\beta_k| \stackrel{\text{Lemma}}= \frac{1}{\tilde{\phi}(0)} = \frac{1}{\phi(1)}   = \frac{1}{1-K}<\infty.
 $$
 We conclude that \eqref{formal} is a meaningful family of equations. In other words, $(X_n -\mu_X)$ can be represented as a moving-average process with respect to the white-noise sequence $(u_n)$.
\end{proof}
\noindent From the explicit construction in the proof of Theorem~\ref{existence}, we find the following branching representation of the INAR($\infty$) process. The branching formulation will be useful for the derivation of the moment-generating function. It furthermore summarizes the most elegant and, at the same time, efficient way for simulating from the INAR($\infty$) model:
\begin{proposition}\label{branching}
Let $\(X_n\)$ be an INAR($\infty$) sequence with respect to an immigration sequence $\(\varepsilon_i\)$ and reproduction coefficients $\alpha_k\geq 0,\, k\in\N,$ so that $\sum_{k=1}^\infty\alpha_k< 1$; see Definition~\ref{INAR}. Then
\begin{equation}
X_n = \sum\limits_{i\in\Z} \sum\limits_{j=1}^{\varepsilon_i}F^{(i,j)}_{n-i},\quad n\in\Z, \label{branching_rep}
\end{equation}
where 
 $\(F^{(i,j)}_n\)$ are independent (over $i\in\Z$ and $j\in\N$) copies of a branching process  $\(F_n\)$ defined by 
\begin{align}\label{F_n}
 F_n := \sum\limits_{g=0}^\infty G_{n}^{(g)},\quad n\in\Z. 
 \end{align}
The generations $(G_n)$ in \eqref{F_n} are constructed recursively by
\begin{align}\label{recursion}
 G_{n}^{(0)} &:=1_{\{n = 0\}} \quad\text{  and  }\quad G_{n}^{(g)}:=\sum_{k=1}^n \alpha_k\circ G_{n-k}^{(g-1)}:= \sum_{k=1}^n \sum_{m=1}^{G_{n-k}^{(g-1)}} \xi^{(n,k,g)}_m,\quad n\in\Z,
\end{align}
with  $\xi^{(n,k,g)}_m\sim\text{Pois}(\alpha_k)$ independently over $m,n,k,g$. Furthermore, we have the following distributional equality for the generic family-process $(F_n)$:
\begin{equation}
 \(F_n\)_{n\in\Z}\; \eqd \;\(1_{\{n = 0\}} + \sum\limits_{i=1}^n\sum\limits_{j=1}^{G_i^{(1)}}F^{(i,j)}_{n-i}\)_{n\in\Z}\label{recursivity}.
 \end{equation}
\end{proposition}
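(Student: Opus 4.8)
Everything except the distributional recursion \eqref{recursivity} is already contained in the proof of Theorem~\ref{existence}. The candidate $\tilde X_n=\sum_{i=-\infty}^{n}\sum_{j=1}^{\varepsilon_i}F_{n-i}^{(i,j)}$ constructed there solves \eqref{DE} and is stationary, the family processes $(F^{(i,j)})$ are i.i.d.\ over $i\in\Z,\,j\in\N$, and each is built from generations via \eqref{recursion}; since $F_{n-i}^{(i,j)}=0$ for $i>n$, the summation range extends to all $i\in\Z$. By the uniqueness part of Theorem~\ref{existence}, $X_n=\tilde X_n$ almost surely, which gives \eqref{branching_rep} together with \eqref{F_n} and \eqref{recursion} at once. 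So the plan is really to prove \eqref{recursivity}.

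For \eqref{recursivity} I would exploit the self-similarity of the branching construction. First split off the founder: by \eqref{F_n} and $G_n^{(0)}=1_{\{n=0\}}$ we have $F_n=1_{\{n=0\}}+\sum_{g=1}^{\infty}G_n^{(g)}$, and \eqref{recursion} gives $G_n^{(1)}=\alpha_n\circ 1\sim\mathrm{Pois}(\alpha_n)$ for $n\ge 1$ with $G_0^{(1)}=0$, so $G^{(1)}$ is exactly the sequence of first-generation offspring counts. The key step is to regroup every higher generation according to its first-generation ancestor: each individual in generation $g\ge 1$ descends from a unique generation-one individual, and the descendants of the $j$-th such individual born at time $i$ form, as a process in $n$, the $(g-1)$-th generation $H_{n-i}^{(g-1;i,j)}$ of a sub-family rooted at time $i$. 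This yields
\begin{align*}
G_n^{(g)}=\sum_{i=1}^{n}\sum_{j=1}^{G_i^{(1)}}H_{n-i}^{(g-1;i,j)},\qquad g\ge 1,
\end{align*}
and, summing over $g$ and interchanging the nonnegative sums, $F_n=1_{\{n=0\}}+\sum_{i=1}^{n}\sum_{j=1}^{G_i^{(1)}}\tilde F_{n-i}^{(i,j)}$ with $\tilde F^{(i,j)}:=\sum_{g'=0}^{\infty}H^{(g';i,j)}$.

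It then remains to identify the joint law of the founder-generation $G^{(1)}$ together with the descendant-clusters $\tilde F^{(i,j)}$. Because \eqref{recursion} passes from one generation to the next through counting sequences that are i.i.d.\ over all their indices, each $\tilde F^{(i,j)}$ is an independent copy of the generic family process $(F_n)$, and the whole collection is independent of $G^{(1)}$, which is assembled from the generation-one counting sequences alone. Matching this with the right-hand side of \eqref{recursivity}---where $G^{(1)}$ and the $F^{(i,j)}$ are taken mutually independent with $F^{(i,j)}\eqd F$---delivers the claimed distributional equality.

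The main obstacle is exactly this last identification of the descendant-clusters as i.i.d.\ copies of $F$ that are independent of $G^{(1)}$. Making it precise at the almost-sure level would require the recursive relabeling of the offspring index $m$ in $\xi_m^{(n,k,g)}$ among the distinct first-generation lineages---the bookkeeping already described as cumbersome in the proof of Theorem~\ref{existence}. I would avoid the explicit relabeling and argue at the distributional level instead: conditionally on $G^{(1)}$, the counting sequences carrying generation index $g\ge 2$ are untouched and remain i.i.d., so each cluster has the conditional law of $(F_n)$ and the clusters are conditionally independent; de-conditioning then yields \eqref{recursivity}.
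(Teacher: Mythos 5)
Your treatment of \eqref{branching_rep}, \eqref{F_n}, and \eqref{recursion} --- reading them off the construction in the proof of Theorem~\ref{existence} and invoking its uniqueness part --- is exactly the paper's argument. For \eqref{recursivity}, however, you go in the opposite direction from the paper, and the difference matters. You decompose the given process $(F_n)$ pathwise by first-generation ancestor and then must prove that the resulting clusters are i.i.d.\ copies of $(F_n)$, independent of $G^{(1)}$; that is, you must disentangle a single realization into independent pieces. The paper starts from the other side: it takes the right-hand side of \eqref{recursivity}, built from genuinely independent copies $F^{(i,j)}$ with generation layers $G^{(g-1,i,j)}$, defines $\tilde G^{(0)}_n := 1_{\{n=0\}}$ and $\tilde G^{(g)}_n := \sum_{i=1}^n \sum_{j=1}^{G^{(1)}_i} G^{(g-1,i,j)}_{n-i}$, and checks that these layers obey the same stochastic recursion \eqref{recursion} as the $G^{(g)}_n$; the only substantive step is that a superposition of independent thinnings is again a thinning of the superposed counts, $\sum_{i,j}\alpha_k\circ G^{(g-2,i,j)}_{n-i-k} = \alpha_k\circ\sum_{i,j}G^{(g-2,i,j)}_{n-i-k}$, which is immediate because concatenating independent i.i.d.\ $\mathrm{Pois}(\alpha_k)$ counting sequences gives again such a sequence. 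Equality of the recursions then gives equality in law. What each approach buys: your direction yields the stronger pathwise identity $F_n = 1_{\{n=0\}} + \sum_{i}\sum_j \tilde F^{(i,j)}_{n-i}$, which is the classical branching-process picture; the paper's direction yields only the distributional statement, but that is all the proposition claims, and it sidesteps the lineage bookkeeping entirely.

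The caveat on your route: the clusters $H^{(g-1;i,j)}$ are only well-defined after fixing an enumeration of the individuals in each generation-time cell consistent with the arrays $\xi^{(n,k,g)}_m$, and your conditional step --- ``given $G^{(1)}$, the counting sequences with generation index $g \ge 2$ are untouched, hence the clusters are conditionally i.i.d.\ with the law of $(F_n)$'' --- compresses an induction over $g$ in which this enumeration must be tracked so that distinct clusters consume disjoint blocks of the arrays $\xi^{(n,k,g)}_{\cdot}$. That is precisely the relabeling the paper declares cumbersome in the proof of Theorem~\ref{existence} and avoids here by reversing the direction of the argument. So your plan can be completed, but as written the decisive identification of the clusters is asserted rather than proved; if you want a short rigorous proof, the paper's reversed argument is the economical one.
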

\begin{proof}
Equation \eqref{branching_rep} together with \eqref{F_n} and \eqref{recursion} is exactly the construction of a solution to the defining difference-equations \eqref{DE} in the proof of Theorem~\ref{existence}; see \eqref{F_n1} and \eqref{G_n_2}. To establish \eqref{recursivity}, consider the process on the right-hand side:
$$
\(\tilde{F}_n\)_{n\in\Z} \;:= \; \(1_{\{n = 0\}} + \sum\limits_{i=1}^n\sum\limits_{j=1}^{G_i^{(1)}}F^{(i,j)}_{n-i}\)_{n\in\Z}$$
We show that the process $\(\tilde{F}_n\)$ is constructed by the same (stochastic) recursion as $(F_n)$; see \eqref{F_n} and \eqref{recursion}. Then the equality in distribution follows. 
For $n\in\Z$, we define
\begin{align}
\tilde{G}^{(0)}_n&:= 1_{\{n = 0\}}\quad \text{   and   }\quad \tilde{G}^{(g)}_n := \sum\limits_{i=1}^n\sum\limits_{j=1}^{{G}_i^{(1)}}{G}^{(g-1,i,j)}_{n-i},\quad g\in\N, 
\end{align}
where $\(G^{(g,i,j)}_n\),\, g\in\N_0,$ are the generation processes that constitute the family processes $\(F^{(i,j)}_n\),\, i\in\Z,\, j\in\N,$ in \eqref{recursivity}. In particular, $\(G^{(g,i,j)}_n\)$ are independent copies of the generations $\(G^{(g)}_n\)$ defined in \eqref{recursion}.
Then, by construction, $\tilde{F}_n = \sum_{g\geq 0} \tilde{G}^{(g)}_n,\, n\in\Z$. This establishes a representation for $\(\tilde{F}_n\)$ of the same form as \eqref{F_n} for $\(F_n\)$. Next, we show that the summands $\(\tilde{G}^{(g)}_n\)$ follow the same recursion \eqref{recursion} as the original generations $\({G}^{(g)}_n\)$: for $g=0$, we have that $\tilde{G}^{(0)}_{n} = 1_{\{n = 0\}},\, n\in\Z$. So the starting value of the recursion for $\(\tilde{G}^{(g)}_n\)$ is the same as the starting value of the recursion \eqref{recursion} for $\({G}^{(g)}_n\)$. For $g=1$, recursion \eqref{recursion}  is also analogue:
\begin{align*}
\tilde{G}^{(1)}_n& = \sum\limits_{i=1}^n\sum\limits_{j=1}^{G_i^{(1)}}G^{(0,i,j)}_{n-i}= \sum\limits_{i=1}^n\sum\limits_{j=1}^{G_i^{(1)}}1_{\{n-i = 0\}}= G_n^{(1)}= \sum\limits_{k=1}^n\alpha_k\circ 1_{\{n-k=0\}}
=\sum\limits_{k=1}^n\alpha_k\circ \tilde{G}^{(0)}_{n-k}.
\end{align*}
And, for any $g\geq2$, we find that
$$
\tilde{G}^{(g)}_n = \sum\limits_{i=1}^n\sum\limits_{j=1}^{G_i^{(1)}} G^{(g-1,i,j)}_{n-i}
=\sum\limits_{i=1}^n\sum\limits_{j=1}^{G_i^{(1)}}\sum\limits_{k=1}^{n-i}\alpha_k\circ G^{(g-2,i,j)}_{n-i-k}
=\sum\limits_{k=1}^n\sum\limits_{i=1}^{n-k}\sum\limits_{j=1}^{G_i^{(1)}}\alpha_k\circ G^{(g-2,i,j)}_{n-i-k},
$$
where in the third equality we use that $G^{(g-2,i,j)}_{n-i-k} = 0,\, g\geq 2,\, n-i-k \leq 0$.
At this point we avoid the explicit representation of the counting sequences. We just remind ourselves that all thinnings involved are independent and establish
$$
\tilde{G}^{(g)}_n = \sum\limits_{k=1}^n\alpha_k\circ\sum\limits_{i=1}^{n-k}\sum\limits_{j=1}^{G_i^{(1)}} G^{(g-2,i,j)}_{n-i-k} =  \sum\limits_{k=1}^n\alpha_k\circ\tilde{G}^{(g-1)}_{n-k},\quad n\in\Z, \, g\geq 2.
$$
In other words, the processes $\(G_n\)$ and $\(\tilde{G}_n\)$ and, consequently, the processes $\(F_n\)$ and $\(\tilde{F}_n\)$ are constructed by the same stochastic recursion. We conclude that $\big(F_n\big)\eqd\(\tilde{F}_n\)$. This establishes \eqref{recursivity}.
\end{proof}
\subsection{Moment structure}
\noindent From the representation of the INAR($\infty$) sequence as a superposition of shifted i.i.d.\ family processes in Proposition~\ref{branching} above, we derive equations for the joint moment-generating function of the model. First, we fix some notation:
\begin{definition}\label{mgf}
For any sequence $(t_n)_{n\in\N_0}\subset \R$, let  $\mathrm{supp}\big((t_n)\big):=\min\{n\in\N_0:\, t_k=0, k>n\}$ be the \emph{support of the sequence $(t_n)$}. We use the convention that $\min\emptyset := \infty$. Furthermore, for $A\subset\R$, let
$c_{00}(A):= \big\{(t_n)_{n\in\N_0}\subset A: \, \mathrm{supp}\big((t_n)\big)< \infty \big\}$, the space of sequences in $A$ with a finite number of nonzero values. For any time series $(Y_n)_{n\in\N_0}$, we define the \emph{joint moment-generating function} 
\begin{align}
M_{(Y_n)} \big((t_n)_{n\in\N_0}\big) := \E \exp\left\{\sum\limits_{n=0}^\infty t_n Y_n \right\},\quad (t_n)\in c_{00}\( \R\).
\end{align}
\end{definition}
The somewhat unusual definitions above have been chosen for most direct comparison between the INAR($\infty$) joint moment-generating function and the Laplace functional of a Hawkes process; see Proposition~\ref{hawkes_laplace} and Proposition~\ref{approximating_laplace} below.

\begin{theorem} \label{INAR_mg}
Let  $(X_n)$ be an INAR($\infty$) sequence with respect to immigration parameter $\alpha_0\geq 0$ and reproduction coefficients $\alpha_k\geq 0,\, k\in\N$, such that $K=\sum_{k=1}^\infty\alpha_k<1$; see Definition~\ref{INAR}. Then there exists a constant $\delta >0$ such that
\begin{align}
M_{(X_n)} \big((t_n) \big)\leq\exp\bigg\{d\alpha_0  \frac{1+K}{2K} \bigg\}<\infty, \quad \(t_n\) \in c_{00} \big((-\infty, \delta] \big) \label{M_X_bound},
\end{align}
where $d:=\mathrm{supp}\big((t_n)\big) + 1$ is the maximal number of nonzero values of $(t_n)$.
Furthermore, we have that
\begin{align}
M_{(X_n)} \big((t_n) \big)& = \exp\left\{\alpha_0\sum\limits_{i\in\Z} \bigg(M_{(F_n)}\Big((t_{i+n})_{n\in\N_0}\Big)-1\bigg)\right\} \label{M_X},
\end{align}
where we set $t_m:=0$ for $m\leq 0$. 
Here, $\(F_n\)$ denotes the generic family-process from Proposition~\ref{branching}.
Its joint moment-generating function $M_{(F_n)}$ is the unique solution to 
\begin{align}
&M_{(F_n)}\big((s_n)\big)= \e^{s_0}\exp\left\{\sum\limits_{k=1}^\infty \alpha_k\Big(M_{(F_n)}\big((s_{n+k})_{n\in\N}\big)-1\Big)\right\} \label{M_F},\quad (s_n)\in c_{00}((-\infty,\delta])
.\end{align}
\end{theorem}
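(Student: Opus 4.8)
The plan is to treat the three assertions in the order \eqref{M_F}, then \eqref{M_X}, and finally the finiteness bound \eqref{M_X_bound}, since the functional equation for a single family process is the building block for the other two. Every computation is driven by one elementary identity: if $N\sim\mathrm{Pois}(\lambda)$ is independent of an i.i.d.\ sequence $(Z_j)$, then $\E\prod_{j=1}^N c^{Z_j}=\exp\{\lambda(\E c^{Z_1}-1)\}$, i.e.\ a Poisson number of independent marks turns a product into an exponential. To obtain \eqref{M_F} I would start from the distributional recursivity \eqref{recursivity}, $F_n\eqd 1_{\{n=0\}}+\sum_{i=1}^n\sum_{j=1}^{G_i^{(1)}}F_{n-i}^{(i,j)}$, where the $F^{(i,j)}$ are independent copies of $F$ and $G_i^{(1)}=\alpha_i\circ 1\sim\mathrm{Pois}(\alpha_i)$ independently over $i$. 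After the substitution $m=n-i$ and Tonelli one has $\sum_n s_nF_n=s_0+\sum_{i\ge1}\sum_{j=1}^{G_i^{(1)}}\sum_{n\ge0}s_{i+n}F^{(i,j)}_n$; conditioning on $(G_i^{(1)})$ and using independence of the families gives $\E\exp\{\sum_n s_nF_n\}=e^{s_0}\,\E\prod_{i\ge1}\big(M_{(F_n)}((s_{i+n})_{n\ge0})\big)^{G_i^{(1)}}$, and applying the Poisson identity factor by factor produces exactly \eqref{M_F}. The sum over $k$ there is in fact finite for $(s_n)\in c_{00}$: whenever $k>\mathrm{supp}((s_n))$ the shifted sequence is the zero sequence, so the corresponding increment $M_{(F_n)}(\cdot)-1$ vanishes.

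Uniqueness of \eqref{M_F} I would prove by induction on $S:=\mathrm{supp}((s_n))$, among functions normalized by $M_{(F_n)}(\mathbf{0})=1$. The observation just made shows that the right-hand side of \eqref{M_F} involves $M_{(F_n)}$ only at sequences of strictly smaller support (a shift by $k\ge1$ drops the support to $S-k<S$) together with the base value $M_{(F_n)}(\mathbf{0})=1$. Hence for $S=0$ one is forced to $M_{(F_n)}((s_0,0,0,\dots))=e^{s_0}$, and the inductive step determines the value on support $S$ from the (already unique) values on supports $<S$; therefore any two normalized solutions coincide.

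For \eqref{M_X} I would run the same computation on the branching representation \eqref{branching_rep}, $X_n=\sum_{i\in\Z}\sum_{j=1}^{\varepsilon_i}F^{(i,j)}_{n-i}$ with $\varepsilon_i\sim\mathrm{Pois}(\alpha_0)$ i.i.d., now with the immigration Poissons in place of $(G_i^{(1)})$: writing $\sum_n t_nX_n=\sum_i\sum_{j=1}^{\varepsilon_i}\sum_n t_{i+n}F^{(i,j)}_n$ and applying the Poisson identity over $i$ yields $M_{(X_n)}((t_n))=\exp\{\alpha_0\sum_i(M_{(F_n)}((t_{i+n})_{n\ge0})-1)\}$. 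The genuinely new feature is that the sum over $i\in\Z$ is infinite (every $i\le\mathrm{supp}((t_n))$ contributes), so I must show $\sum_i(M_{(F_n)}((t_{i+n})_{n\ge0})-1)$ converges absolutely and justify exchanging $\E$ with the infinite product over $i$; both reduce to the single analytic input below.

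The main obstacle, and the only place where subcriticality $K<1$ is essential, is the finiteness input: that the total family size $|F|:=\sum_n F_n$ has a finite exponential moment $\E e^{\delta|F|}<\infty$ for some $\delta>0$. Since each individual has $\sum_k\mathrm{Pois}(\alpha_k)=\mathrm{Pois}(K)$ children in total, $|F|$ is the total progeny of a Galton--Watson process with $\mathrm{Pois}(K)$ offspring; I would control it through the truncations $|F^{[G]}|:=\sum_{g=0}^G\sum_n G_n^{(g)}$, whose moment-generating functions satisfy $\phi_G(\theta)=e^\theta e^{K(\phi_{G-1}(\theta)-1)}$ with $\phi_0(\theta)=e^\theta$, and prove by induction that $\phi_G(\theta)\le y^\ast$ uniformly in $G$, where $y^\ast<\infty$ is the smaller root of $y=e^{\theta+K(y-1)}$ (which exists precisely for $\theta<K-1-\ln K$, a quantity positive because $K<1$, and automatically satisfies $y^\ast\ge e^\theta=\phi_0$). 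Monotone convergence then gives $\E e^{\delta|F|}=\lim_G\phi_G(\delta)\le y^\ast<\infty$. Given this, finiteness of $M_{(F_n)}$ and $M_{(X_n)}$ on $c_{00}((-\infty,\delta])$ follows from the pointwise bound $\sum_n t_nF_n\le\delta|F|$ (valid since $t_n\le\delta$, $F_n\ge0$), the interchange of limits in \eqref{M_X} follows by dominated convergence with dominating variable $e^{\delta|F|}$, and \eqref{M_X_bound} follows by estimating each increment via $e^x-1\le x e^x$, which gives $M_{(F_n)}((t_{i+n})_{n\ge0})-1\le\delta\,\E[(\sum_{n\in R_i}F_n)\,e^{\delta|F|}]$ for the index window $R_i:=\{n:\,i+n\in\mathrm{supp}((t_n))\}$; summing over $i$ and using that each index $n$ lies in $R_i$ for exactly $d$ values of $i$ turns $\sum_i\sum_{n\in R_i}F_n$ into $d|F|$, so $\sum_i(M_{(F_n)}(\cdots)-1)\le d\,\delta\,\E[|F|e^{\delta|F|}]$, and shrinking $\delta$ so that $\delta\,\E[|F|e^{\delta|F|}]\le(1+K)/(2K)$ produces the stated constant.
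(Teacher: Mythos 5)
Your proposal is correct, and its overall architecture coincides with the paper's: both derive \eqref{M_X} and \eqref{M_F} from the branching representations \eqref{branching_rep} and \eqref{recursivity} via the Poisson compounding identity, prove uniqueness of \eqref{M_F} by induction on the support of the argument sequence, and reduce finiteness to an exponential moment of the total progeny $S=\sum_n F_n$ of the embedded Galton--Watson process with $\mathrm{Pois}(K)$ offspring. You differ, however, in the two genuinely analytic steps. First, where the paper invokes Theorem 2.1 of Nakayama (2004) to obtain $\E e^{\delta S}<\infty$ for some $\delta>0$, you prove it by hand: the truncated-generation MGFs obey $\phi_G(\theta)=e^{\theta}e^{K(\phi_{G-1}(\theta)-1)}$, induction keeps them below the smaller root $y^{\ast}$ of $y=e^{\theta+K(y-1)}$ (which exists for $0<\theta<K-1-\ln K$, a positive threshold because $K<1$, and satisfies $y^{\ast}\ge e^{\theta}$), and monotone convergence finishes; this classical fixed-point argument is valid and makes the theorem self-contained. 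Second, for the bound \eqref{M_X_bound}, the paper sets $m_i:=M_{(F_n)}\big((\delta^{(i)}_n)\big)-1$, uses \eqref{M_F} to get the recursion $m_i=\exp\{\sum_{k=1}^{i}\alpha_k m_{i-k}\}-1$ for $i>d$, applies $e^x\le(1-x)^{-1}$, and sums the resulting renewal-type inequality to solve for $\sum_i m_i$; you avoid the recursion altogether, using $e^x-1\le x_{+}e^{x_{+}}$ together with the observation that each index $n$ lies in exactly $d$ of the windows $R_i$, which gives $\sum_i\big(M_{(F_n)}(\cdots)-1\big)\le d\,\delta\,\E\big[Se^{\delta S}\big]$, and then a single choice of $\delta$ --- independent of $d$ and of the argument sequence, as the statement requires --- with $\delta\,\E\big[Se^{\delta S}\big]\le(1+K)/(2K)$ yields exactly the stated constant. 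Your route is shorter and needs only the marginally stronger input $\E\big[Se^{\delta S}\big]<\infty$, which your fixed-point lemma delivers anyway. Two refinements to record when writing this up: uniqueness of \eqref{M_F} does require the normalization $M_{(F_n)}(\mathbf{0})=1$ that you impose (the scalar equation $y=e^{K(y-1)}$ has a second root $y>1$ when $K<1$, so the functional equation alone admits another solution), a point the paper leaves implicit; and for absolute convergence of the sum over $i\in\Z$ in \eqref{M_X} the negative parts also need a bound, e.g.\ $1-\E e^{x}\le\E(-x)_{+}\le(\max_n|t_n|)\,\E\sum_{n\in R_i}F_n$, which sums over $i$ to at most $d\,(\max_n|t_n|)/(1-K)$ --- a one-line addition in the spirit of your window count, and one the paper itself does not spell out either.
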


\begin{proof} For~\eqref{M_X}, we first apply representation~\eqref{branching_rep}:
$$
M_{(X_n)}\((t_n)\)=
\E\exp\left\{\sum\limits_{n=0}^\infty t_n X_n\right\}=
\E\exp\left\{\sum\limits_{n=0}^d t_n X_n\right\} = 
\E\exp\left\{\sum\limits_{n=0}^d t_n \sum\limits_{i\in\Z} \sum\limits_{j=1}^{\varepsilon_i}F^{(i,j)}_{n-i} \right\} 
{=} \E\prod\limits_{i\in\Z}\prod\limits_{j=1}^{\varepsilon_i}\exp\left\{\sum\limits_{n=0}^d t_n  F^{(i,j)}_{n-i}\right\} .$$
In the following, we set $t_m:= 0$ whenever $m<0$. Conditioning on the immigration sequence $\(\varepsilon_i\)$ and exploiting its independence from the family processes $\(F_n^{(i,j)}\)$
yields 
\begin{align}
M_{(X_n)}(t_1,\dots,t_d)&=  \E\prod\limits_{i\in\Z}\prod\limits_{j=1}^{\varepsilon_i} \E\[\exp\left\{\sum\limits_{n=0}^d t_n  F^{(i,j)}_n\right\}\]\nonumber\\
&=\prod\limits_{i\in\Z} \E\[ \E\[\exp\left\{\sum\limits_{n\in\Z} t_n  F_{n-i}\right\}\]^{\varepsilon_i}\]\nonumber\\
&= \prod\limits_{i\in\Z} \E\Big[ M_{(F_n)}\big((t_{i+n})_{n\in\N_0}\big)^{\varepsilon_i}\Big]\nonumber\\
&= \prod\limits_{i\in\Z} \exp\left\{\alpha_0 \Big(M_{(F_n)}\big((t_{i+n})_{n\in\N_0}\big) -1\Big)\right\}\nonumber\\
&= \exp\left\{\sum\limits_{i\in\Z}\alpha_0 \Big(M_{(F_n)}\big((t_{i+n})_{n\in\N_0}\big) -1\Big)\right\}\label{M_X_proof},
\end{align}
where in the last but one step we applied the formula for the probability-generating function of a Poisson random variable. Up to now, \eqref{M_X_proof} is only a formal representation of $M_{(X_n)}$ in terms of $M_{(F_n)}$. It is not clear yet, whether and when
$M_{(X_n)}$ is finite. 
For~\eqref{M_F}, we apply the equation~\eqref{recursion} of $\(F_n\)$ from Proposition~\ref{branching}:
$$
M_{(F_n)}\big((s_n)\big) = \E\exp\left\{\sum\limits_{n=0}^{\infty}s_{n} F_{n}\right\}
\stackrel{\eqref{recursivity}}{=}\E\exp\left\{\sum\limits_{n=0}^{{\infty}}s_{n}\( 1_{\{n = 0\}}+ \sum\limits_{k=1}^n\sum\limits_{j=1}^{G_k^{(1)}}F^{(k,j)}_{n-k}\)\right\}
$$
We note that in the last term, the index $k$ may run to $\infty$ instead of $n$, because
$F^{(k,j)}_{n-k}  = 0,\, j\in\N$, whenever $k>n$. After straightforward calculations, we obtain
\begin{align}
M_{(F_n)}\big((s_n)_{n\in\N_0}\big) =e^{s_0}\E\exp\left\{\sum\limits_{n=0}^{{\infty}}\sum\limits_{k=1}^\infty\sum\limits_{j=1}^{G_k^{(1)}}s_{n}F^{(k,j)}_{n-k}\right\}
=e^{s_0}\exp\left\{\sum\limits_{k=1}^\infty \alpha_k \Big(M_{(F_n)}\big((s_{n+k})_{n\in\N_0}\big)-1\Big)\right\}\label{M_F_proof}.
\end{align}
Next we derive finiteness of $M_{(F_n)}$. Let $(s_n)$ be a sequence with finite support and $s:=\max\{s_n\}$ so that
$\sum_{n=0}^{\infty}s_nF_n$
is bounded from above by $sS$, where $S:=\sum_{n=0}^\infty F_n$ denotes the total number of individuals in the generic family $\(F_n\)$. We remind ourselves of the defining equation \eqref{F_n} for the family process $(F_n)$ and find that
\begin{align}
S = \sum_{n=0}^\infty F_n =  \sum_{n=0}^\infty \sum _{g = 0}^\infty G^{(g)}_n= \sum _{g = 0}^\infty \sum_{n=0}^\infty G^{(g)}_n.\label{S}
\end{align}
 We denote the total number of individuals in the $g$-th generation by $Y_g :=  \sum_{n=0}^\infty G^{(g)}_n,\, g\in\N_0$. The sequence $\(Y_g\)_{g\in\N_0}$ is the embedded generation process. Applying \eqref{recursion}, we find that $Y_0 = 1$ and, for $g\geq 2$,
\begin{align*}
Y_g   =  \sum_{n=0}^\infty G^{(g)}_n 
=  \sum_{n=0}^\infty \sum\limits_{k=1}^\infty \alpha_k\circ G^{(g-1)}_{n-k}
=\sum\limits_{k=1}^\infty  \sum_{n=0}^\infty \alpha_k\circ G^{(g-1)}_{n-k}
\eqd  \sum\limits_{k=1}^\infty \alpha_k\circ \sum_{n=0}^\infty  G^{(g-1)}_{n-k}
=\sum\limits_{k=1}^\infty  \alpha_k\circ Y_{g-1}
= \sum\limits_{k=1}^{Y_{g-1}}\xi^{(g)}_k,
\end{align*}
where $\xi^{(g)}_k\iidsim \text{Pois}(K),\, k,g\in\N$. In other words, the embedded generation process $\(Y_g\)$ is a standard Galton--Watson branching process. From \eqref{S}, we see that $S\eqd \sum_{g= 0}^\infty Y_g $. In other words, $S$ is distributed like the cumulative limit of a standard Galton--Watson process. The moment-generating functions of such limits have been considered in the literature: as $K<1$, by Theorem 2.1 in \citet{nakayama04}, there exists a $\delta > 0$ such that $\E\exp\{\delta S\}<\infty$ if and only if there exists a $\tilde{\delta} > 0$ such that $\E\exp \big\{\tilde{\delta} \xi_1^{(1)}\big\}<\infty$. The latter is indeed the case because the moment-generating function of a Poisson variable is finite on $\R$. Furthermore, by Jensen's inequality, we see that $1\leq \lim_{n\to\infty} \E\exp\{\delta S/n\} \leq \lim_{n\to\infty} \(\E\exp\{\delta S\}\)^{1/n} =1$. So, for any given $\epsilon>0$, we can assume the existence of a $\delta>0$ such that $\E\exp\(\delta S\)<1+\epsilon$. In particular, 
\begin{align}
\exists\, \delta>0\text{  such that  }M_{(F_n)}\big((s_n)\big)\leq  \E\exp\(\delta S\)   < 1 + (1-K)/(2K)\quad \text{  for  }(s_n)_{n\in\N_0}\in c_{00}\((-\infty,\delta]\),\label{M_F_bound}
\end{align}where, as before, $K=\sum_{k=1}^\infty \alpha_k< 1$. We now have established finiteness of $M_{(F_n)}$ in a neighborhood of zero. It remains to establish finiteness of $M_{(X_n)}$. Our goal is to bound the series representation \eqref{M_X_proof} of $M_{(X_n)}$. To that aim, we need to refine the bound  \eqref{M_F_bound} for $M_{(F_n)}\big((s_n)\big)$.
To that aim, with the constant $\delta>0$ from \eqref{M_F_bound}, for $i\in\Z$, we introduce the sequences $\(\delta_n^{(i)}\)_{n\in\Z}$ defined by 
\begin{equation}
\delta_n^{(i)} :=
\begin{cases}
 \delta, &n = i,\, i-1,\, \dots ,\,i-d,\\
  0, &\text{else},
\end{cases}\label{delta_seq}
\end{equation}
 where $d = \mathrm{supp}((t_n)) +1$, as before, with $(t_n)$ the considered argument sequence.
Note that, for $i<0$, we have that
$\delta_n^{(i)} = 0,\,n\in\N_0$. 
Consequently, by definition of $M_{(F_n)}$, for $i<0$, we have that $M_{(F_n)}\(\(\delta_n^{(i)}\)_{n\in\N_0}\) = 1$. Furthermore, observe that $\(\delta_{n+k}^{(i)}\)_n = \(\delta_{n}^{(i-k)}\)_n,\, i,k\in\Z.$
For $(t_n)\in c_{00} \big((-\infty,\delta]\big)$, we have by component-wise monotonicity of $M_{(X_n)}$ that
\begin{align}
M_{(X_n)} \big((t_n) \big)\,\leq\, M_{(X_n)}\((\delta^{(d)} _{n+i})_{n\in\N_0}\)
\stackrel{\eqref{M_X_proof}}{=}  \exp\left\{\sum\limits_{i \in\Z}\alpha_0 \Big(M_{(F_n)}\big((\delta^{(d)} _{n+i})_{n\in\N_0}\big) -1\Big)\right\}
&{=}  \exp\left\{\sum\limits_{i=-\infty}^{d} \alpha_0 \bigg(M_{(F_n)}\Big((\delta^{({d-i})} _{n})_{n\in\N_0}\Big) -1\bigg)\right\}\nonumber\\
& = \exp\left\{ \alpha_0\sum\limits_{i=0}^{\infty} m_i \right\},
\label{M_X_bound}
\end{align}
where we set $m_i:= M_{(F_n)}\big((\delta^{({i})}_n)_{n\in\N_0}\big) -1,\,i\in\Z$. Note that, by \eqref{M_F_bound}, we have that 
\begin{align}
 0\leq  m_i \leq  (1-K)/(2K),\quad i\in\Z.\label{m_i_bounds}
 \end{align}
 and, in particular, $m_i = 0$ for $i<0.$
 In the following, we only consider $m_i$ with $i> d$. In this case, we get from \eqref{delta_seq} that $e^{\delta^{({i})} _{0}} =e^0=1$ and we obtain the recursion \begin{align}
m_i =  M_{(F_n)}\big((\delta^{({i})} _{n})_{n\in\N_0}\big) -1\stackrel{\eqref{M_F_proof}}{=} e^{\delta^{({i})} _{0}}\exp\left\{\sum\limits_{k=1}^\infty \alpha_k \Big(M_{(F_n)}\big((\delta^{({i})} _{n+k})_{n\in\N_0}\big)-1\Big)\right\} -1\stackrel{\eqref{delta_seq}}{=} \exp\left\{\sum\limits_{k=1}^{i} \alpha_k m_{i-k}\right\} -1,\quad i> d.\label{recursive_ub}
\end{align}
For the sum in the exponential, we find that 
$$
\sum_{k=1}^i \alpha_k m_{i-k}  \stackrel{\eqref{m_i_bounds}}{\leq} \sum_{k=1}^\infty \alpha_k (1-K)/(2K) = K(1-K)/(2K)<1,\quad i\in\Z.
$$ 
Therefore, we may apply the exponential inequality $\exp(x) \leq (1-x)^{-1},\, x<1,$ in \eqref{recursive_ub}:
\begin{align}
m_i \leq \frac{ 1}{1-\sum_{k=1}^i \alpha_k m_{i-k} } -1  = \frac{\sum_{k=1}^i \alpha_k m_{i-k} }{1-\sum_{k=1}^i \alpha_k m_{i-k} }\stackrel{\eqref{m_i_bounds}}{\leq} \frac{\sum_{k=1}^i \alpha_k m_{i-k} }{1-\sum_{k=1}^i \alpha_k (1-K)/(2K)}\leq \frac{2 \sum_{k=1}^\infty \alpha_k m_{i-k} }{1+K },\quad i> d. \label{M_i_ineq}
\end{align}
Summing both sides of \eqref{M_i_ineq} over $i> d$, we obtain
\begin{align}
\sum\limits_{i = d+1}^{\infty} m_i
\leq \frac{2}{1+K} \sum\limits_{i = d+1}^{\infty} \sum\limits_{k=1}^\infty \alpha_k m_{i-k}
=  \frac{2}{1+K} \sum\limits_{k=1}^\infty  \alpha_k  \sum\limits_{i = d+1}^{\infty}m_{i-k} 
\stackrel{\eqref{m_i_bounds}}{\leq}  \frac{2}{1+K}  \sum\limits_{k=1}^\infty  \alpha_k  \sum\limits_{l = 0}^{\infty}m_{l} 
= \frac{2K}{1+K}  \( \sum\limits_{l = 0}^{d}m_{l}  +  \sum\limits_{l = d+1}^{\infty}m_{l} \).\label{sum_ineq}
\end{align}
Keeping in mind that $1- {2K}/({1+K})> 0$, we solve \eqref{sum_ineq} for $ \sum_{i = d+1}^{\infty}m_{i}$ and find that
\begin{align}
\sum\limits_{i = d +1}^{\infty} m_i  \leq  \(1- \frac{2K}{1+K}\)^{-1}  \frac{2K}{1+K}   \sum\limits_{l = 0}^{d}m_{l} \stackrel{\eqref{m_i_bounds}}{\leq} \frac{2K}{1-K} d\frac{1-K}{2K}= (1 +d).\label{m_i_sum_bound}
\end{align}
For the summation of $(m_i)_{i\in\N_0}$ over $i\in\N_0$, we finally obtain
\begin{align}
\sum\limits_{i = 0}^{\infty} m_i  \stackrel{\eqref{m_i_sum_bound}}{\leq}  \sum\limits_{i = 0}^{d} m_i+d +1 \stackrel{\eqref{m_i_bounds}}{\leq} (d+1)\(\frac{1-K}{2K}+1\)= (d+1) \frac{1+K}{2K}. \label{m_i_sum}
\end{align}
 We conclude that, for all $(t_n)\in c_{00}( (-\infty,\delta])$ with $\mathrm{supp}((t_n)) = d$, we have that
 \begin{align}
M_{(X_n)}(t_1,\dots,t_d) \stackrel{\eqref{M_X_bound}}{\leq} \exp\bigg\{\alpha_0\sum\limits_{i = 0}^{\infty} m_i\bigg\}\stackrel{\eqref{m_i_sum}}{\leq}
 \exp\bigg\{\alpha_0d \frac{1+K}{2K} \bigg\} <\infty.
\end{align}
\par Uniqueness of $M_{(F_n)}$ follows by induction over the (finite) support of the argument sequence. In that sense, the implicit equation \eqref{M_F_proof} specifies $M_{(F_n)}$ recursively.
\end{proof}
\noindent As a matter of fact, every moment-generating function that is finite in a neighborhood of zero has a Taylor series about zero. The coefficients of this series are the joint moments. Consequently, from Theorem~\ref{INAR_mg}, we obtain
\begin{corollary}
Let $\(X_n\)$ be an INAR($\infty$) sequence as in Theorem~\ref{existence}. For $m\in\N$ and $k_1,\dots,k_m \in \N_0,$ we have that
$$
\E\bigg[X_1^{k_1}X_2^{k_2}\cdot\cdot\cdot X_m^{k_m}\bigg] < \infty.
$$
\end{corollary}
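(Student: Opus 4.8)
The plan is to read off the claim directly from Theorem~\ref{INAR_mg}, using only the nonnegativity of the process. The crucial point is that the set $c_{00}((-\infty,\delta])$ on which $M_{(X_n)}$ was shown to be finite contains sequences with \emph{strictly positive} entries (any value up to $\delta$). Hence Theorem~\ref{INAR_mg} in fact guarantees finiteness of the joint moment-generating function at a genuinely positive argument, and for the $\N_0$-valued variables $X_n$ this is exactly what is needed to control their joint moments.

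First I would fix $m\in\N$ and exponents $k_1,\dots,k_m\in\N_0$, let $\delta>0$ be the constant from Theorem~\ref{INAR_mg}, and consider the finitely supported sequence $(t_n)_{n\in\N_0}$ given by $t_n:=\delta$ for $n\in\{1,\dots,m\}$ and $t_n:=0$ otherwise. Since each entry lies in $(-\infty,\delta]$ and the support is finite, $(t_n)\in c_{00}((-\infty,\delta])$, so Theorem~\ref{INAR_mg} yields
\begin{equation*}
M_{(X_n)}\big((t_n)\big)=\E\exp\Big\{\delta\sum_{n=1}^m X_n\Big\}<\infty.
\end{equation*}
Next I would invoke the elementary inequality $\e^{tx}\geq (tx)^k/k!$, valid for $x\geq 0$, $t>0$, and $k\in\N_0$. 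Applying it coordinatewise with $t=\delta$ and multiplying over $n=1,\dots,m$ (all factors being nonnegative, as $X_n\in\N_0$ almost surely) gives the pointwise bound
\begin{equation*}
X_1^{k_1}\cdots X_m^{k_m}\leq\Big(\prod_{n=1}^m k_n!\Big)\,\delta^{-\sum_{n=1}^m k_n}\exp\Big\{\delta\sum_{n=1}^m X_n\Big\}.
\end{equation*}
Taking expectations of this bound and inserting the previous display then shows that $\E[X_1^{k_1}\cdots X_m^{k_m}]$ is at most a finite multiple of $M_{(X_n)}((t_n))$, which is the assertion.

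There is no real obstacle here: the only step deserving attention is the observation that $c_{00}((-\infty,\delta])$ admits positive entries, so that the one-sided bound of Theorem~\ref{INAR_mg} already dominates every monomial in $X_1,\dots,X_m$. Equivalently, one may phrase the argument as in the preceding remark: $M_{(X_n)}$ is finite on a neighborhood of the origin, hence real-analytic there, and its Taylor coefficients about $0$ are precisely $\E[X_1^{k_1}\cdots X_m^{k_m}]/\prod_{n=1}^m k_n!$, so each joint moment is finite.
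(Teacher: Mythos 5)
Your proposal is correct and follows essentially the same route as the paper: the corollary is read off directly from Theorem~\ref{INAR_mg}, whose finiteness statement on $c_{00}((-\infty,\delta])$ applies in particular to argument sequences with strictly positive entries. The paper's proof is just the one-line remark that a moment-generating function finite in a neighborhood of zero admits a Taylor expansion whose coefficients are the joint moments; your explicit pointwise domination $X_1^{k_1}\cdots X_m^{k_m}\leq \big(\prod_{n=1}^m k_n!\big)\,\delta^{-(k_1+\dots+k_m)}\exp\big\{\delta\sum_{n=1}^m X_n\big\}$ makes that same deduction concrete and elementary, and you note the Taylor-series phrasing yourself as the equivalent formulation.
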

\noindent The second moments of the INAR($\infty$) sequence are particularly tractable:
\begin{proposition}\label{autocovariance}
Let $(X_n)$ be an  INAR($\infty$) process with reproduction coefficients $\alpha_k\geq 0,\, k\in\N$, such that $K:=\sum_{k=1}^\infty\alpha_k < 1$, and immigration parameter $\alpha_0\geq 0$. Furthermore, let $R(j):= \Cov\(X_n, X_{n+j}\),\,j\in\Z,$ be the autocovariance function of the (stationary) sequence.
Then
\begin{align}
R(j)&= \frac{\alpha_0}{1-K}\sum_{k=0}^\infty\beta_{k}\beta_{k+|j|} \geq 0,\quad j\in \Z,\label{R(j)}\end{align}
where
$
\beta_0:= 1$ and $\beta_k:= \sum_{i=1}^k \alpha_i \beta_{k-i}$. In addition, we have that
\begin{align}
\sum_{j=0 }^\infty R(j) \leq \frac{\alpha_0}{\(1-K\)^3}<\infty. \label{covsum_bound}
\end{align}
\begin{proof}
With the notation from the moving-average representation of the INAR($\infty$) sequence in Proposition~\ref{MA}, we find that, for $j,n\in\Z$, 
$$
R(j) =  \Cov\(X_n-\mu_X, X_{n+j}-\mu_X\) =  \Cov\( \sum\limits_{k=0}^\infty \beta_k u_{n-k},  \sum\limits_{k=0}^\infty \beta_k u_{n+j-k}\).
$$
From Proposition~\ref{AR}, we know that $\Cov\(u_n, u_{n+j}\) = 1_{\{j = 0\}}\alpha_0/(1-K),\, j\in\Z,$.  Furthermore, from Proposition~\ref{MA}, we have that the coefficients $\beta_k$ are absolutely summable. So~\eqref{R(j)} follows from Proposition 3.1.2.\ in \citet{brockwell91}. 
For the sum of the autocovariance sequence, we observe
\begin{align*}
\sum\limits_{k=0}^{\infty} R(k) =  \frac{\alpha_0}{1-K}\sum\limits_{k=0}^\infty\sum\limits_{i = 0}^\infty\beta_i\beta_{i+k}
= \frac{\alpha_0}{1-K}\sum\limits_{i=0}^\infty \beta_i\sum\limits_{k = 0}^\infty\beta_{i+k}
\leq  \frac{\alpha_0}{1-K}\sum\limits_{i=0}^\infty \beta_i\sum\limits_{k = -i}^\infty\beta_{i+k}
=\frac{\alpha_0}{\(1-K\)^3}.
\end{align*}
The last equality re-uses the result $\sum_{i=0}^\infty \beta_i=1/(1-K)$ from Proposition~\ref{MA}.
\end{proof}
\end{proposition}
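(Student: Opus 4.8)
The plan is to read the autocovariances straight off the moving-average representation of Proposition~\ref{MA} and then invoke the standard formula for the autocovariance of an absolutely summable linear filter of white noise.

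First I would recall from Proposition~\ref{MA} that $X_n - \mu_X = \sum_{k=0}^\infty \beta_k u_{n-k}$, where $(u_n)$ is the white-noise sequence of Proposition~\ref{AR}, the coefficients satisfy $\beta_k \geq 0$, and $\sum_{k=0}^\infty \beta_k = 1/(1-K) < \infty$; in particular $(\beta_k)$ is absolutely summable. From Proposition~\ref{AR} I would carry over $\Cov(u_n, u_{n'}) = 1_{\{n=n'\}}\, \alpha_0/(1-K)$. Writing $R(j) = \Cov\big(\sum_k \beta_k u_{n-k},\, \sum_k \beta_k u_{n+j-k}\big)$ and applying Proposition 3.1.2 of \citet{brockwell91} --- which states precisely that such a filter produces autocovariance $\sigma^2 \sum_k \beta_k \beta_{k+|j|}$ with $\sigma^2 = \Var(u_n)$ --- delivers the claimed formula $R(j) = \frac{\alpha_0}{1-K}\sum_{k=0}^\infty \beta_k \beta_{k+|j|}$. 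Nonnegativity is then automatic, since $\alpha_0 \geq 0$ and every $\beta_k \geq 0$.

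For the summability bound I would sum the formula over $j \geq 0$ and interchange the two summations; this is justified because all summands are nonnegative, so Tonelli applies. The resulting double sum $\sum_i \beta_i \sum_k \beta_{i+k}$ I would decouple by enlarging the inner summation range, $\sum_{k=0}^\infty \beta_{i+k} \leq \sum_{k=-i}^\infty \beta_{i+k} = \sum_{l=0}^\infty \beta_l = 1/(1-K)$, which removes the dependence on $i$. Pulling this constant out leaves $\sum_i \beta_i = 1/(1-K)$, and multiplying by the prefactor $\alpha_0/(1-K)$ yields $\alpha_0/(1-K)^3$, as claimed.

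I do not anticipate any real obstacle: the substantive work was already done in establishing the MA($\infty$) representation and the bound $\sum_k \beta_k < \infty$ in Proposition~\ref{MA}. The only items demanding a moment's care are verifying that the hypotheses of the Brockwell--Davis formula hold (absolute summability of $(\beta_k)$, already in hand) and justifying the interchange of summations in the bound (immediate from nonnegativity). Enlarging the inner sum's lower limit from $0$ to $-i$ is the single mildly clever step, but it is elementary and relies only on $\beta_k \geq 0$.
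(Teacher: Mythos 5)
Your proposal is correct and follows essentially the same route as the paper's own proof: read $R(j)$ off the MA($\infty$) representation of Proposition~\ref{MA} via Proposition 3.1.2 of Brockwell--Davis, then bound $\sum_j R(j)$ by interchanging the sums and enlarging the inner summation range from $k\geq 0$ to $k\geq -i$ so that it telescopes to $\sum_l \beta_l = 1/(1-K)$. Your explicit appeal to Tonelli for the interchange is a minor point of added rigor that the paper leaves implicit; otherwise the two arguments coincide step for step.
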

\subsection{INAR($p$) embedding}
\noindent As one would expect, the INAR($\infty$) model can be interpreted as the limit of the usual INAR($p$), $p<\infty,$ model: if we truncate the excitement sequence of an INAR($\infty$) time series after a large lag $p$, the resulting process is an approximation of the original time series:
\begin{proposition}
Let  $(X_n)$ be an INAR($\infty$) sequence with respect to immigration parameter $\alpha_0\geq 0$ and reproduction coefficients $\alpha_k\geq 0,\, k\in\N$, $K=\sum_{k=1}^\infty\alpha_k<1$; see Definition~\ref{INAR}. Furthermore, for $p\in\N$, let  $\(X^{(p)}_n\)$ be another INAR($\infty$) sequence with respect to the same immigration parameter $\alpha_0$ as above and reproduction coefficients $\(\alpha^{(p)}_k\)$, specified by
$$
\alpha^{(p)}_k :=
\begin{cases}
\alpha_k,&1\leq k\leq p\\
0,& k>p.
\end{cases}
$$
Then the finite-dimensional distributions of $\(X^{(p)}_n\)$ converge to the finite-dimensional distributions of $\(X_n\)$ as $p$ goes to infinity.
\end{proposition}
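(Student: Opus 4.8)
The plan is to bypass moment-generating functions and instead obtain the stronger statement that, on a suitable common probability space, $X_n^{(p)}\to X_n$ almost surely for every $n\in\Z$; convergence of the finite-dimensional distributions is then immediate. The idea is to couple all truncated processes to the original one through the branching representation of Proposition~\ref{branching}. Concretely, I would fix the immigration sequence $(\varepsilon_i)$ together with one family of Poisson counting variables $\xi^{(n,k,g),(i,j)}_m\sim\mathrm{Pois}(\alpha_k)$, and then build, for every $p$, the family processes $\big(F_n^{(p),(i,j)}\big)$ exactly as in \eqref{recursion} but with the reproduction coefficients truncated after lag $p$, reusing for each lag $k\le p$ the very same counting variables that drive the untruncated families $\big(F_n^{(i,j)}\big)$. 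Setting $X_n^{(p)}:=\sum_{i\le n}\sum_{j=1}^{\varepsilon_i}F^{(p),(i,j)}_{n-i}$ as in \eqref{branching_rep} produces, for each fixed $p$, a stationary solution of the truncated difference equations; by the uniqueness part of Theorem~\ref{existence} this coupled process has the law of the INAR($\infty$) sequence with coefficients $\big(\alpha^{(p)}_k\big)$, so distributional conclusions drawn from the coupling are legitimate.

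Two pathwise properties of the coupling drive the argument. First, because truncation only removes reproduction events and the counting variables are shared, monotonicity of the thinning operation gives $F^{(p),(i,j)}_m\le F^{(i,j)}_m$ for all $m$ (formally by induction on the generation index $g$ in \eqref{recursion}, since fewer lags and smaller inputs produce a smaller thinning). Second, because the generation recursion \eqref{recursion} at time $m$ invokes only lags $k\le m$, truncation beyond $p\ge m$ leaves $G^{(g),(p)}_m$ unchanged for every $g$, whence $F^{(p),(i,j)}_m=F^{(i,j)}_m$ as soon as $p\ge m$. Combining the two, each summand satisfies $F^{(p),(i,j)}_{n-i}\le F^{(i,j)}_{n-i}$, with equality once $p\ge n-i$. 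Therefore
\begin{align*}
0\le X_n-X_n^{(p)}=\sum_{i\le n}\sum_{j=1}^{\varepsilon_i}\Big(F^{(i,j)}_{n-i}-F^{(p),(i,j)}_{n-i}\Big)\le \sum_{i<n-p}\sum_{j=1}^{\varepsilon_i}F^{(i,j)}_{n-i},
\end{align*}
and the right-hand side is the tail of the series defining $X_n$ in \eqref{branching_rep}. Since that series converges almost surely (its sum $X_n$ is finite almost surely by Theorem~\ref{existence}), the tail tends to $0$ as $p\to\infty$, so $X_n^{(p)}\to X_n$ almost surely.

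From here the conclusion is routine: for any finite index set $n_1,\dots,n_m$ the vector $\big(X^{(p)}_{n_1},\dots,X^{(p)}_{n_m}\big)$ converges almost surely, hence in distribution, to $\big(X_{n_1},\dots,X_{n_m}\big)$, which is exactly convergence of the finite-dimensional distributions. The only genuinely delicate points are bookkeeping rather than analytic: one must verify that reusing the counting variables yields, for each $p$, a process with the correct truncated law (this is where the uniqueness in Theorem~\ref{existence} is essential), and that passing the limit $p\to\infty$ through the infinite superposition over immigrants is legitimate---which it is, because all quantities are nonnegative and the almost-sure finiteness of $X_n$ forces the remaining tail to vanish. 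I expect this tail-of-a-convergent-series estimate to be the crux of the proof. An alternative route through the moment-generating function of Theorem~\ref{INAR_mg} is possible, but it would require a radius of convergence uniform in $p$ (obtainable from $K^{(p)}\le K$ via stochastic domination of the embedded Galton--Watson total progeny) and an interchange of the limit with the sum over immigration times in \eqref{M_X}; this is appreciably messier than the direct coupling and, in the end, still reduces to the same almost-sure domination.
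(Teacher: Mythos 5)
Your proof is correct, but it takes a genuinely different route from the paper. The paper works at the level of joint moment-generating functions: it invokes Theorem~\ref{INAR_mg}, proves $\lim_{p\to\infty}M_{(F_n^{(p)})}=M_{(F_n)}$ by induction over the support size of the argument sequence using the recursive equation \eqref{M_F}, and then passes the limit through the infinite sum over immigration times in \eqref{M_X} by dominated convergence, with the summable dominating sequence supplied by the bound \eqref{m_i_sum}. You instead build a pathwise coupling through the branching representation of Proposition~\ref{branching}: sharing the immigration sequence and the Poisson counting variables across all truncation levels, you get monotonicity $F^{(p),(i,j)}_m\le F^{(i,j)}_m$ (induction on the generation index), exact equality once $p\ge m$ (the recursion \eqref{recursion} at time $m$ only touches lags $k\le m$), and hence $0\le X_n-X^{(p)}_n\le\sum_{i<n-p}\sum_{j=1}^{\varepsilon_i}F^{(i,j)}_{n-i}$, a tail of the almost surely convergent series \eqref{branching_rep}; this gives almost-sure convergence on the coupled space, which is strictly stronger than what is claimed. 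Both of your delicate points are handled correctly: the coupled truncated process is exactly the branching construction for the coefficients $\big(\alpha^{(p)}_k\big)$ (terms with $k>p$ are $\mathrm{Pois}(0)\equiv 0$), so by Theorem~\ref{existence} it has the right law, and the tail bound legitimizes the interchange of limits. What each approach buys: yours is more elementary (no analyticity or finiteness arguments for the MGF, no dominating sequence) and yields a stronger, quantitative pathwise statement; the paper's stays entirely within the distributional machinery it has already built in Theorem~\ref{INAR_mg}, avoids any coupling bookkeeping, and produces convergence of the moment-generating functions themselves, which is of independent use.
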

\begin{proof}
It suffices to show that the corresponding moment-generating functions converge. We apply the notation from Theorem~\ref{INAR_mg}, respectively, from Definition~\ref{mgf}. Let $(t_n)_{n\in\N_0}$ be any sequence in $c_{00}((-\infty,\delta])$, where $\delta>0$ is the constant from~\eqref{M_X_bound}. We will establish that for any $\lim_{p\to\infty} M_{\(X^{(p)}_n\)} (t_1,\dots,t_d) = M_{(X_n)} (t_1,\dots,t_d)$. As a first step, we show that the moment-generating functions of the corresponding family-processes converge, i.e., that $\lim_{p\to\infty} M_{\(F^{(p)}_n\)} \big((s_n)_{n\in\N_0}\big) = M_{(F_n)} \big((s_n)_{n\in\N_0}\big)$ for all sequences $(s_n)_{n\in\N_0}\in c_{00}\big( (-\infty,\delta]\big)$. We argue with induction over the size of the support of the sequence: for a sequence
 $(s_n^{(0)})_{n\in\N_0}\subset (-\infty,\delta)$ with $\mathrm{supp}(s_n^{(0)}) =0$, we find that
 $$
\lim\limits_{p\to\infty}M_{(F_n^{(p)})} \((s^{(0)}_n)_{n\in\N_0}\) \stackrel{\eqref{M_F}}{=} \e^{s^{(0)}_0} = M_{(F_n)} \((s^{(0)}_n)_{n\in\N_0}\). 
$$
Now, for  $d\in\N$, let $\(s_n^{(d)}\)_{n\in\N_0}\subset (-\infty,\delta]$ be a sequence such that $\mathrm{supp}\(\(s_n^{(d)}\)\)\leq d$. We obtain
\begin{align*}
\lim\limits_{p\to\infty}M_{(F_n^{(p)})} \((s^{(d)}_n)_{n\in\N_0}\) 
&\stackrel{\eqref{M_F}}{=}    \e^{s^{(d)}_0}\exp\left\{\lim\limits_{p\to \infty} \sum\limits_{k=1}^\infty \alpha^{(p)}_k\Big(M_{(F_n^{(p)})}\big((s^{(d)}_{n+k})_{n\in\N_0}\big)-1\Big)\right\}.\\
 \end{align*}
Clearly, $\lim_{p\to \infty} \alpha^{(p)}_k = \alpha_k$. Furthermore, we see that, for $k\in\N$, $s^{(d)}_{n+k} = 0$ whenever $n>d-k$. This means, the support of the sequences $(s^{(d)}_{n+k})_n$ is strictly smaller than $d$. Consequently, by induction hypothesis, we have that, for $k\in\N$, $\lim_{p\to\infty}M_{(F_n^{(p)})} \((s^{(d)}_{n+k})_{n\in\N_0}\)=M_{(F_n)} \((s^{(d)}_{n+k})_{n\in\N_0}\)$. We may conclude: 
 \begin{align*}
 \lim\limits_{p\to\infty}M_{(F_n^{(p)})} \((s^{(d)}_n)_{n\in\N_0}\) 
    =  e^{s^{(d)}_0}\exp\left\{\sum\limits_{k=1}^\infty \alpha_k\Big(M_{(F_n)}\big((s^{(d)}_{n+k})_{n\in\N_0}\big)-1\Big)\right\}
   \stackrel{\eqref{M_F}}{=}   M_{(F_n)}\((s^{(d)}_n)_{n\in\N}\),\quad d\in\N.
 \end{align*}
 This establishes weak convergence of generic family processes. 
In a second step, we apply formula \eqref{M_X} for $M_{X^{{(p)}}}$:
\begin{equation}
 M_{(X_n^{(p)})}\((t_n)\) = \exp\left\{\sum\limits_{i\in\Z} \alpha_0\Big(M_{(F_n^{(p)})}\((t_{i +n})_{n\in\N_0}\)-1\Big)\right\}\label{M_X_p},\quad (t_n)\in c_{00}((-\infty,\delta]).
\end{equation}
We show that we may apply the dominated convergence theorem on the right-hand side of \eqref{M_X_p}: let $\(\delta^{(i)}_n\)_{n\in\Z},\, i\in\Z,$ be sequences defined by $\delta^{(i)}_n:= \delta,\, n\in\{i-d ,\dots,i-1,i\},$ and $\delta^{(i)}_n:= 0,\, \text{else}$; see \eqref{delta_seq}. We find, for $i\in\Z$, that
 $$
M_{(F_n^{(p)})}\Big(\big(t_{n+i}\big)_{n\in\N_0}\Big)-1\;\leq\; M_{(F_n^{(p)})}\Big(\big( \delta^{(d-i)}_n\big)_{n\in\N_0} \Big)-1 \;\leq\;
 M_{(F_n)}\Big( \big(\delta^{(d-i)}_n\big)_{n\in\N_0}  \Big)-1,\quad p\in\N.
 $$
 The second inequality follows by similar induction over the support size as before. 
We already know from~\eqref{m_i_sum} that
$
 \Big(M_{(F_n)}\big( (\delta^{(d-i)}_n) \big)-1\Big)_{i\in\Z}
$
is summable over $i$. In view of this dominating summable sequence, we take the limit on both sides of ~\eqref{M_X_p} and conclude that, for $ (t_n)\in c_{00}\big((-\infty,\delta]\big)$,
\begin{eqnarray*}
\lim\limits_{p\to \infty} M_{(X_n^{(p)})}\((t_n)\) &
= &\exp\left\{\lim\limits_{p\to \infty}  \sum\limits_{k\in\Z} \alpha_0\big(M_{(F_n^{(p)})}\((t_{n+k})_{n\in\N_0}\big)-1\)\right\}\\
&\stackrel{\text{DC}}{=}&\exp\left\{ \sum\limits_{k\in\Z} \alpha_0\big(M_{(F_n)}\((t_{n+k})_{n\in\N_0}\big)-1\)\right\} \\
&=&  M_{X}\((t_n)\).
\end{eqnarray*}
\end{proof}

\section{The Hawkes process}
\noindent After the long first section on the new INAR($\infty$) model, the following shorter section formally presents the well-known Hawkes point process. We treat point processes as random counting-measures and only consider point processes  on $\R$. First, we fix some general notation and terminology. Then we recall the definition, the existence theorem and selected properties of the (univariate) Hawkes process. For the general theory, we mainly follow \citet{resnick87}, Chapter 3. For the Hawkes part, our main references are the seminal papers \citet{hawkes71a,hawkes71b} and \citet{hawkes74}.
\subsection{Preliminaries}\label{Preliminaries}
\noindent Let $\B:=\B(\R)$ be the Borel-sets in $\R$ and $
\B_b:=\{B\in\B(\R):\,B\ \mathrm{ bounded}\}.
$
A measure $m$ on $\R$ is a \emph{point measure} if $m(B)\in\N_0,\ B\in\B_b.$   We denote the space of point measures on $\R$ by $M_p:=M_p(\R)$. Let $C^+_K:=C^+_K(\R)$  be the space of nonnegative continuous functions on $\R$ with compact support.
Point measures $ (m _n)$ \emph{converge vaguely} to a point measure $m$ if $\lim_{n\to\infty}  \int f(t) m_n(\mathrm{d}t)\to \int f(t) m(\mathrm{d}t)$, $f\in C^+_K(\R)$; we write $m_n\vto m$. Vague convergence yields the \emph{vague topology} on $M_p$. 
The Borel $\sigma$-algebra generated by this topology, $\mathcal{M}_p:=\mathcal{B}(M_p)$, coincides with the $\sigma$-algebra generated by the sets $\{ m\in\mathcal{M}_p:\  m(A)=k\},\, A\in\mathcal{B}_b, k\in\N_0$; see Lemma 1.4.\ in \citet{kallenberg83}. 
Any measurable mapping $\Phi: (M_p,\mathcal{M}_p) \to (\R, \mathcal{B})$ such that $\lim_{n\to\infty}\Phi(m _n) = \Phi( m )$ whenever $ m _n \vto m $, is \emph{continuous with respect to the vague topology}. Our basic underlying probability space is $(\Omega, \mathcal{F},\P)$. A measurable mapping $
 N: (\Omega, \mathcal{F})\rightarrow \(M_p,\mathcal{M}_p\),\, \omega\mapsto N(\omega)$ is called \emph{point process}. The \emph{history} of a point process $N$ is the filtration $\(\mathcal{H}^{N}_t\)$, where, for $t\in\R$,
\begin{align}
\mathcal{H}^{N}_t := 
\sigma\bigg(\Big \{
\omega\in\Omega:\,
 N(\omega)\big(
(a,b]
\big)
 \in B \Big\}
:\,-\infty<a < b \leq t, 
B\subset \N_0\bigg).
\label{history}
\end{align}
We assume that $\mathcal{H}^{N}_t \subset \mathcal{F},\, t\in\R$. Note that our definition of a point process allows multiple points, i.e., we may have that ``$\P\[N(\{t\}) > 1|N(\{t\})>0 \] >0$''. Also note that, for $t\in\R$, the $\sigma$-algebra $\mathcal{H}^{N}_t $ includes all sets of the form
\begin{align}
\Big\{\omega:\, N(\omega)\big(\{t_n\}\big)=k_n,\, N(\omega)\big((t_{n-1}, t_{n})\big) = 0 ,\, n= 0, -1,-2,\dots\Big\} \bigg\} \quad \text{with}\quad  t=: t_0 \geq t_{-1} \geq \dots \quad \text{and} \quad k_0, k_{-1}, \dots \strut\in\N.
\end{align}
\subsection{Definition and existence}
\begin{definition}\label{Hawkes intensity}
For any point measure $m \in M_p$, define the \emph{Hawkes intensity}
$$
\lambda(t| m ):=\eta + \int_{\R}h(t-s) m (\mathrm{d}s),\quad t\in\R,
$$
where $\eta>0$ is a constant and 
$h: \R\rightarrow\R_0^+$ is a nonnegative measurable function with $h(t)=0,\ t\leq 0$. We refer to $\eta$ as \emph{immigration intensity} and to $h$ as \emph{reproduction intensity}.
\end{definition}
The immigration intensity is often called \emph{baseline intensity} and the reproduction intensity is often called \emph{excitement function}. However, our objective is to highlight the similarity between Hawkes and INAR processes. Consequently, we make use of a joint branching-process terminology; see Definition~\ref{INAR}.

\begin{definition}\label{hawkes_definition}
Let $\lambda$ be a Hawkes intensity as in Definition~\ref{Hawkes intensity}.
A \emph{Hawkes process} is a point process $N$ that is a solution to the family of equations
\begin{equation}
\E\bigg[1_A N\Big(\big(a,b\big]\Big)\bigg] = \E\[1_A\int\limits_{a}^b\lambda(s|N)\mathrm{d}s\], \quad a < b , \,A\in\mathcal{H}^N_a. \label{HE1}
\end{equation}
\end{definition}
A priori, it is not clear whether this family of equations has a solution, whether any possible solution would be unique (in a distributional sense) and whether the distribution of a solution would be stationary. These questions are answered by the following proposition; see \citet{hawkes74}:
\begin{proposition} \label{uniqueness}
Let $\lambda$ be a Hawkes intensity with immigration intensity $\eta>0$ and reproduction intensity $h$ such that $\int_0^\infty h(t) \mathrm{d}t < 1$. Then there is precisely one stationary process that satisfies \eqref{HE1}.
\end{proposition}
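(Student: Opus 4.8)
\section*{Proof proposal}

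The plan is to construct a solution explicitly through a Poisson cluster (branching) representation---the continuous-time analogue of the family-process construction used for the INAR($\infty$) process in Theorem~\ref{existence}---and then to argue uniqueness separately. Write $\|h\|_1:=\int_0^\infty h(t)\,\mathrm{d}t<1$. The branching structure is the key, exactly as in the discrete case: immigrants play the role of the $\varepsilon_i$, and each immigrant spawns a cluster that mirrors a generic family process $(F_n)$ from Proposition~\ref{branching}.

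First I would build the candidate process. Let immigrants arrive on $\R$ according to a homogeneous Poisson process of rate $\eta$. Each immigrant located at $s$ is the ancestor of a cluster generated recursively: a point at time $u$ produces direct offspring according to a Poisson process on $(u,\infty)$ with intensity $h(\cdot-u)$, and every offspring reproduces in the same way, independently of everything else. Counting points generation by generation yields an embedded Galton--Watson process whose offspring mean is $\|h\|_1<1$; subcriticality gives almost-sure extinction and finite expected total progeny $1/(1-\|h\|_1)$, so every cluster is almost surely finite. Superposing all clusters defines a point measure $N$; local finiteness follows because the expected number of points that the stationary Poisson immigration, together with its finite-mean clusters, places in any bounded interval is finite. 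Stationarity of $N$ is inherited from the shift-invariance of the Poisson immigration and the i.i.d.\ nature of the clusters, in complete analogy with the stationarity argument for $(\tilde X_n)$ in Theorem~\ref{existence}.

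Next I would verify that this $N$ solves the defining equation~\eqref{HE1}. The heart of the matter is a branching (conditional-independence) property: given the history $\mathcal{H}^N_t$, the points already present in $(-\infty,t]$ continue to reproduce independently with intensity $h(\cdot-s)$, and fresh immigrants keep arriving at rate $\eta$, so that the $\mathcal{H}^N_t$-conditional intensity at time $t$ equals $\eta+\int_{(-\infty,t)}h(t-s)\,N(\mathrm{d}s)=\lambda(t\mid N)$. Integrating this predictable intensity and using the definition of the compensator yields~\eqref{HE1}. I expect this verification---carefully decomposing the future of the process into descendants of already-observed points plus descendants of future immigrants, and checking the decomposition is adapted to the right $\sigma$-algebras---to be the main obstacle, since this is where the permission of multiple points and the infinite-memory kernel $h$ must be handled without circularity.

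Finally, for uniqueness I would represent any stationary solution as a thinning of a common driving Poisson random measure on $\R\times[0,\infty)$, retaining a point at time $t$ when its auxiliary mark falls below $\lambda(t\mid N)$; this Poisson-embedding is the continuous-time counterpart of fixing the exogenous counting sequences $\xi^{(n,k)}_l$ in the INAR uniqueness argument. Two stationary solutions $N,N'$ built from the same driving measure then differ only through their intensities, and the expected count of their discrepancy on a bounded window is bounded by $\E\int|\lambda(t\mid N)-\lambda(t\mid N')|\,\mathrm{d}t\le\|h\|_1$ times the same discrepancy, exactly as $\E|X_n-Y_n|\le K\,\E|X_n-Y_n|$ in Theorem~\ref{existence}. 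Since $\|h\|_1<1$, the discrepancy must vanish, forcing $N=N'$. The condition $\|h\|_1<1$ is thus the continuous-time counterpart of $K<1$, and it is the single quantitative input driving both existence (subcriticality) and uniqueness (contraction).
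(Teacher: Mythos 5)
Your existence half follows exactly the route the paper itself indicates: the paper gives no self-contained proof of Proposition~\ref{uniqueness}, but cites \citet{hawkes74} and sketches precisely the Poisson cluster construction you describe (homogeneous Poisson immigrants at rate $\eta$, each point triggering an inhomogeneous Poisson process with intensity $h(\cdot-s)$, subcriticality of the embedded Galton--Watson process from $\int_0^\infty h(t)\,\mathrm{d}t<1$). That part of your proposal is sound, and your remark that verifying \eqref{HE1} hinges on the conditional intensity depending only on the observed point positions---not on the unobserved genealogy---is indeed the key step there.

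The genuine gap is in your uniqueness argument. Equation \eqref{HE1} is a distributional (compensator) constraint; it makes no reference to any exogenous driving noise. In the INAR case the defining equation \eqref{DE} is a pathwise equation with respect to given counting sequences $\xi^{(n,k)}_l$, so ``two solutions with respect to the same counting sequences'' is meaningful and the contraction yields pathwise uniqueness directly. In the Hawkes case there is nothing to fix: two arbitrary stationary solutions of \eqref{HE1} live a priori on unrelated probability spaces, and you cannot simply declare them to be thinnings of a \emph{common} Poisson random measure---that phrase hides the entire difficulty. To make your strategy rigorous you need (i) an embedding theorem: every point process admitting $\lambda(t|N)$ as its $\mathcal{H}^N_t$-intensity can be realized, on an enlarged probability space, as a thinning of \emph{some} Poisson random measure on $\R\times[0,\infty)$; (ii) pathwise uniqueness for the thinning equation driven by a fixed Poisson measure (your contraction); and (iii) a Yamada--Watanabe-type transfer: embed the arbitrary solution, construct from \emph{its own} driving measure the canonical cluster solution, identify the two pathwise via (ii), and only then conclude equality in law. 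Steps (i) and (iii) are substantive theorems, not notational conveniences; this is essentially how \citet{bremaud96} argue, whereas \citet{hawkes74} avoid the issue altogether by working with generating functionals. Two further points need care: the contraction requires the discrepancy to have finite expectation, so you must first rule out stationary solutions with infinite mean intensity (note that $m=\eta+m\int_0^\infty h(t)\,\mathrm{d}t$ is also satisfied by $m=\infty$); and since the pair $(N,N')$ need not be jointly stationary, you should run the contraction on the bounded function $g(t):=\E|N-N'|(\mathrm{d}t)/\mathrm{d}t$, deduce $g\leq h*g$ pointwise, and conclude $\sup g\leq\big(\int_0^\infty h(t)\,\mathrm{d}t\big)\sup g$, hence $g\equiv 0$, rather than literally imitating the stationarity step $\E|X_{n-k}-Y_{n-k}|=\E|X_n-Y_n|$ of Theorem~\ref{existence}.
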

\noindent The existence and uniqueness result above is established by the observation that the solution to \eqref{HE1} must be a cluster process or---more specifically---a branching process with immigration: the points are interpreted as individuals that are either immigrants or offspring. The immigrants (or cluster centers) stem from a homogeneous Poisson process with intensity $\eta$. These immigrants form generation zero of the following branching procedure: an immigrant at time $s\in\R$ triggers an inhomogeneous Poisson process with intensity $h\(\cdot - s\)$ where $h$ is the reproduction intensity of the process as in Definition~\ref{Hawkes intensity}. These offspring individuals form generation one. Each of these first-generation individuals again triggers an inhomogeneous Poisson process in a similar way, etc., so that the families (or clusters) are generated by cascades of inhomogeneous Poisson processes.  

\subsection{The Laplace functional}
\noindent The cluster and branching process point of view is also fertile beyond the results of Proposition~\ref{uniqueness}. For example, it leads to equations for the \emph{Laplace functional} of a Hawkes process. The Laplace functional $\Psi_N$ of a point process $N$ is a functional defined on the space of nonnegative measurable functions with compact support by
$$
\Psi_N[f] := \E\exp\left\{-\int_{-\infty}^\infty f(t)N\(\mathrm{d}t\)\right\}.
$$
The next proposition is Theorem 2 in \citet{hawkes74}---with a slight modification as the original statement refers to the probability generating functional whereas we prefer the nowadays more common Laplace functional notion:
\begin{proposition} \label{hawkes_laplace}
Let  $N$ be a Hawkes process with immigration intensity $\eta> 0$ and reproduction intensity $h$ as in Proposition~\ref{uniqueness}. Then the Laplace functional $\Psi_N$ of $N$ is
\begin{align*}
\Psi_N[f] &= \exp\left\{\eta \int_{-\infty}^\infty\Big(\Psi_F\big[\,f(t + \cdot)\big] - 1\Big)\,\mathrm{d}t\right\},
\end{align*}
where $\Psi_F$ is a functional that is the unique solution to
\begin{align*}
\Psi_F[f] &=  e^{-f(0)}\exp\left\{ \int_{0}^\infty \Big(\Psi_F\big[\,f(t + \cdot)\big] - 1\Big)h(t)\,\mathrm{d}t\right\}.
\end{align*}
In both equalities, $f$ denotes an arbitrary measurable, nonnegative function with compact support.
\end{proposition}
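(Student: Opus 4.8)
The plan is to prove both identities by reading them off the Poisson cluster representation of the Hawkes process that already underlies Proposition~\ref{uniqueness}. Recall that representation: the generation-zero immigrants (cluster centers) form a homogeneous Poisson process on $\R$ with intensity $\eta$, and each center at a point $s$ independently spawns a cluster distributed as the shift by $s$ of a generic family process $F$. The family $F$ is itself the branching process whose ancestor sits at $0$ and produces first-generation offspring according to an inhomogeneous Poisson process with intensity $h$ on $(0,\infty)$, each offspring recursively launching an independent copy of $F$. Since $\int_0^\infty h(t)\,\mathrm{d}t<1$, the embedded generation sizes are subcritical, so $F$ is almost surely a finite point measure and $\Psi_F$ is genuinely the Laplace functional of a well-defined random element of $M_p$. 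The two displayed equations are then the Laplace-functional translations of, respectively, ``$N$ is a Poisson cluster process'' and ``$F$ satisfies its own branching recursion''. The single tool used throughout is the probability-generating functional of a Poisson process: for a Poisson process with intensity measure $\mu$ and any measurable $v$ with $0\leq v\leq 1$, one has $\E\prod_i v(s_i)=\exp\{-\int(1-v(s))\,\mu(\mathrm{d}s)\}$.

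First I would compute $\Psi_N$ by conditioning on the Poisson process $N_0$ of cluster centers. Given the centers $\{s_i\}$, the clusters are independent, and the cluster rooted at $s_i$ contributes $\E\exp\{-\int f(s_i+u)\,F(\mathrm{d}u)\}=\Psi_F[f(s_i+\cdot)]$ to the conditional Laplace functional, so the conditional expectation factorizes as $\prod_i\Psi_F[f(s_i+\cdot)]$. Because $0\leq\Psi_F[\cdot]\leq1$, taking the outer expectation over $N_0$ and applying the Poisson functional with intensity $\eta\,\mathrm{d}t$ and $v(s)=\Psi_F[f(s+\cdot)]$ gives $\Psi_N[f]=\exp\{-\eta\int_{\R}(1-\Psi_F[f(s+\cdot)])\,\mathrm{d}s\}$, which is the first claimed identity after renaming the integration variable.

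Next I would derive the fixed-point equation for $\Psi_F$ by the same device one level down. Decompose the generic family into its root point at $0$ together with the sub-families hanging off the first-generation offspring; the offspring positions $\{t_j\}$ form an inhomogeneous Poisson process with intensity $h(t)\,\mathrm{d}t$ on $(0,\infty)$, and each offspring at $t_j$ launches an independent copy $F^{(j)}$ of $F$. Writing $\int f\,\mathrm{d}(F^{(j)}(\cdot-t_j))=\int f(t_j+u)\,F^{(j)}(\mathrm{d}u)$, pulling the factor $e^{-f(0)}$ out of the root, and conditioning on $\{t_j\}$ yields $\Psi_F[f]=e^{-f(0)}\,\E\prod_j\Psi_F[f(t_j+\cdot)]$. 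Applying the Poisson functional once more, now over the intensity $h(t)\,\mathrm{d}t$, produces $\Psi_F[f]=e^{-f(0)}\exp\{-\int_0^\infty(1-\Psi_F[f(t+\cdot)])\,h(t)\,\mathrm{d}t\}$, which is the second claimed identity.

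The part requiring the most care is uniqueness of $\Psi_F$ as a solution of the functional equation. I would argue by a direct contraction, which is the key use of subcriticality. Suppose $\Psi_F$ and $\tilde{\Psi}_F$ both solve the equation and both take values in $[0,1]$, and set $D:=\sup_f|\Psi_F[f]-\tilde{\Psi}_F[f]|\leq 1$. Writing the two right-hand sides as $e^{-f(0)}e^{A}$ and $e^{-f(0)}e^{\tilde A}$ with $A,\tilde A\leq 0$, and using that $e^{-f(0)}\leq1$ (as $f\geq0$) together with the fact that $x\mapsto e^x$ is $1$-Lipschitz on $(-\infty,0]$, one gets $|\Psi_F[f]-\tilde{\Psi}_F[f]|\leq|A-\tilde A|\leq\int_0^\infty|\Psi_F[f(t+\cdot)]-\tilde{\Psi}_F[f(t+\cdot)]|\,h(t)\,\mathrm{d}t\leq D\int_0^\infty h(t)\,\mathrm{d}t$. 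Taking the supremum over $f$ yields $D\leq (\int_0^\infty h)\,D$ with $\int_0^\infty h<1$, forcing $D=0$. Intuitively this is the continuous analogue of the $K^g\to0$ total-progeny estimate used for $S$ in Theorem~\ref{INAR_mg}, and it mirrors the recursive specification of $M_{(F_n)}$ established there; the cleanest route, however, is the one-step contraction above rather than an induction on the support of $f$, since the shift $f\mapsto f(t+\cdot)$ only decreases the support length by the arbitrarily small amount $t$ and so does not reduce it in fixed discrete steps.
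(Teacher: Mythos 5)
Your proof is correct, but note that the paper does not actually prove this proposition: it is imported verbatim (modulo translating probability generating functionals into Laplace functionals) as Theorem~2 of \citet{hawkes74}, so what you have written is a self-contained reconstruction of that classical argument. Your two identities are obtained exactly the way the original proof obtains them, and the way the cluster description after Proposition~\ref{uniqueness} suggests: condition on the Poisson process of cluster centres (respectively, on the first generation within a cluster) and apply the Poisson functional formula $\E\prod_i v(s_i)=\exp\{-\int(1-v(s))\,\mu(\mathrm{d}s)\}$. One small gap worth closing: for the outer expectation to be meaningful you should check $\int_{\mathbb{R}}\bigl(1-\Psi_F[\,f(s+\cdot)]\bigr)\,\mathrm{d}s<\infty$, which follows from $1-e^{-x}\le x$, Fubini, and the fact that the expected total family size equals $1/(1-\int_0^\infty h\,\mathrm{d}t)<\infty$ by subcriticality; without this the first display could degenerate. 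Where you genuinely depart from the cited source is the uniqueness step: your one-step contraction bound $D\le D\int_0^\infty h(t)\,\mathrm{d}t$ among $[0,1]$-valued solutions (the natural class, since Laplace functionals live there) is a clean and correct use of subcriticality, and your closing observation is apt --- the induction-over-support argument that settles uniqueness for the discrete analogue $M_{(F_n)}$ in Theorem~\ref{INAR_mg} does not transfer to continuous time, because the shift $f\mapsto f(t+\cdot)$ shrinks the support only by an arbitrarily small amount, so a contraction (or an equivalent monotone-approximation argument) is genuinely needed there.
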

\section{Links between INAR($\infty$) and Hawkes processes}\label{Links}
\noindent In the following section, we first explain how discrete-time INAR($\infty$) processes can approximate a continuous-time Hawkes process. After the convergence theorem, we establish a number of properties of the approximating sequence. Finally, we collect some structural analogies of the two models.
\subsection{Preliminaries}
\noindent Let $Y_n,\,n\in\N,$ and $Y$ be random variables with values in some topological space. The sequence $(Y_n)$ \emph{converges weakly} to $Y$ if $\lim_{n\to\infty}\E \varphi(Y_n)=\E\varphi(Y)$ for all nonnegative continuous bounded functions $\varphi$. We define weak convergence of point processes with respect to the vague topology $\mathcal{M}_p$ on $M_p$; see Section~\ref{Preliminaries}. In this case, weak convergence of point processes is equivalent to convergence of their finite-dimensional distributions; see \citet{daley03}, Theorem 11.1.VII. 
General weak convergence theory, as developed in the monograph \citet{billingsley68}, considers sequences in metric spaces.
Therefore it is important to note that \emph{the vague topology is metrizable}; see \citet{resnick87}, Proposition 3.17. In other words, we may treat $\(M_p,\mathcal{M}_p\)$ as a metric space where necessary. A most helpful theorem in the weak-convergence context is the \emph{continuous mapping theorem}; see Theorem 5.1 in \citet{billingsley68}. 
We apply it in the following form:
\begin{proposition} \label{discontinuity}
 Let $\(N_n\)$ and $N$ be point processes such that $N_n\wto N,\, n\to\infty.$
Furthermore, let $f: \R\to\R_0^+$ be a bounded, measurable function with compact support and with a set of discontinuities $D_f\in\mathcal{B}$ such that $\P\[N\(D_f\) >0 \]=0$. Then, $\int f(t)N_n(\mathrm{d}t) \wto \int f(t)N(\mathrm{d}t) $ for $n\to\infty.$
\end{proposition}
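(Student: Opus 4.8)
The statement to prove is Proposition~\ref{discontinuity}: if $N_n \wto N$ and $f$ is bounded, measurable, compactly supported with discontinuity set $D_f$ satisfying $\P[N(D_f)>0]=0$, then $\int f\,dN_n \wto \int f\,dN$.

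The plan is to realize $\int f\,dN$ as the image of $N$ under a suitable functional and then invoke the continuous mapping theorem (Theorem 5.1 in \citet{billingsley68}). Concretely, define $\Phi: M_p \to \R$ by $\Phi(m) := \int f(t)\,m(\d t)$. Since the vague topology on $M_p$ is metrizable (by \citet{resnick87}, Proposition 3.17), the continuous mapping theorem applies once I identify the set of discontinuities of $\Phi$ and show that $N$ avoids it almost surely. The conclusion $\Phi(N_n)\wto \Phi(N)$ is then exactly the assertion, since $\Phi(N_n)=\int f\,dN_n$ and $\Phi(N)=\int f\,dN$.

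The substance of the proof is the continuity analysis of $\Phi$. First I would record that $f$ is bounded with compact support, so $\Phi(m)=\int f\,dm$ is finite for every $m\in M_p$ (the integral is over a bounded set and $m$ is finite on bounded sets), hence $\Phi$ is well-defined and real-valued. The key step is to characterize when $\Phi$ fails to be continuous with respect to vague convergence. If $f$ were in $C_K^+$, continuity would be immediate from the very definition of the vague topology; the difficulty is precisely that $f$ is only measurable and may have discontinuities. I would argue that the set of discontinuities of $\Phi$ is contained in $\{m\in M_p:\, m(D_f)>0\}$: if $m_n\vto m$ and $m$ charges no mass on $D_f$, then one can approximate $f$ from above and below by continuous compactly supported functions agreeing with $f$ off a neighborhood of $D_f$, and pass to the limit to obtain $\Phi(m_n)\to\Phi(m)$. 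This is the standard portmanteau-type argument adapting the continuous-function case to a.e.-continuous integrands.

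The main obstacle is making the approximation-by-continuous-functions argument rigorous in the vague setting. The cleanest route is to exploit that vague convergence of point measures is equivalent to convergence of the atoms (locations and multiplicities) on any relatively compact set whose boundary carries no mass of the limit; see the atom-convergence characterization in \citet{resnick87}, Chapter 3. Restricting attention to the compact support $K$ of $f$, on which $N_n$ and $N$ have finitely many atoms, vague convergence forces the finitely many atoms of $m_n$ to converge to those of $m$ (with matching multiplicities), provided $m$ places no atom on the boundary of $K$. Since each limiting atom $t_i$ of $m$ lies outside $D_f$ with probability one (because $\P[N(D_f)>0]=0$), the continuity of $f$ at each $t_i$ yields $f(t_i^{(n)})\to f(t_i)$ along the converging atom locations, and summing over the finitely many atoms gives $\Phi(m_n)\to\Phi(m)$. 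Assembling these pieces shows $\P[N\in \operatorname{Disc}(\Phi)]=0$, and the continuous mapping theorem delivers $\int f\,dN_n \wto \int f\,dN$. The delicate points to watch are atoms of $N$ landing exactly on the boundary of $\operatorname{supp} f$ and the matching of multiplicities for multiple points, both of which are handled by the null-set hypothesis on $D_f$ together with an outer-regularity choice of the compact neighborhood.
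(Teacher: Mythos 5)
Your proposal is correct and takes essentially the route the paper intends: the paper states this proposition without any proof, presenting it simply as the form in which it applies the continuous mapping theorem (Billingsley, Theorem 5.1) on the metrizable space $(M_p,\mathcal{M}_p)$. Your continuity analysis of $\Phi(m)=\int f(t)\,m(\mathrm{d}t)$ at measures $m$ with $m(D_f)=0$, via the atom-convergence characterization of vague convergence on a relatively compact set containing $\mathrm{supp}\,f$ whose boundary is not charged by $m$, is exactly the standard argument the paper delegates to its references, so your write-up just supplies the omitted details.
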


\subsection{The convergence theorem}
\noindent Next to the conditions on the reproduction intensity $h$ from Definition~\ref{hawkes_definition} and Proposition~\ref{uniqueness}, we introduce an additional assumption: we want $h$ piecewise continuous. We say a function $f:\R\to\R$ is \emph{piecewise continuous} if its set of discontinuities $D_f\subset\R$ is finite and for all $t_0\in D_f$ the limits $\lim_{t\to t_0^-}f(t)$ and $\lim_{t\to t_0^+}f(t)$ exist and are finite. Combining all assumptions on $h$ yields the following important technical
 \begin{lemma} \label{K_tilde}
Let $h:\R\to\R_{\geq 0}$ be piecewise continuous function with $h(t) = 0,\, t\leq 0$, and $\int h\,\mathrm{d}t < 1$. Then there exist constants $\delta>0$ and $\tilde{K}<1$ such that, for any $\Delta\in (0, \delta)$,
\begin{align}
K^{(\Delta)}:= \Delta \sum_{k=1}^{\infty}h(k\Delta)\leq \tilde{K}<1 \label{K_unif_bound}.
\end{align}
\end{lemma}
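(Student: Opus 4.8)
Write $K:=\int_{0}^{\infty}h(t)\,\mathrm{d}t=\int h\,\mathrm{d}t<1$, using that $h$ vanishes on $(-\infty,0]$. The quantity $K^{(\Delta)}=\Delta\sum_{k\geq 1}h(k\Delta)$ is precisely the right-endpoint Riemann sum of $h$ over $[0,\infty)$ with mesh $\Delta$, so the natural plan is to show $\limsup_{\Delta\to 0^+}K^{(\Delta)}\le K$. This one-sided statement already suffices: fixing $\tilde K:=(1+K)/2\in(K,1)$, the very definition of $\limsup$ furnishes a $\delta>0$ with $K^{(\Delta)}\le\tilde K$ for every $\Delta\in(0,\delta)$, which is exactly \eqref{K_unif_bound}. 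In particular I do not need full convergence $K^{(\Delta)}\to K$, only the upper bound.

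To estimate $K^{(\Delta)}$ I would split $[0,\infty)=[0,T]\cup(T,\infty)$ for a large truncation level $T$. On the compact part $h$ is piecewise continuous and hence Riemann integrable: among the mesh intervals $((k-1)\Delta,k\Delta]$ meeting $[0,T]$ only finitely many (at most $|D_h|$) contain a jump of $h$, and since the one-sided limits there are finite each such interval contributes $O(\Delta)$; on the remaining intervals $h$ is uniformly continuous, so $\sum_{k\Delta\le T}\Delta\,h(k\Delta)\to\int_0^T h\,\mathrm{d}t\le K$ as $\Delta\to 0$. Thus the compact part is asymptotically bounded by $K$.

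The crux, and the step I expect to be the main obstacle, is the tail $\Delta\sum_{k\Delta>T}h(k\Delta)$: because the domain is unbounded, convergence of the Riemann sum is not automatic, and for a merely piecewise continuous $h$ the sampled values $h(k\Delta)$ are not directly controlled by $\int_T^\infty h\,\mathrm{d}t$ (indeed, without a tail condition one can engineer tall thin spikes making $K^{(\Delta)}$ blow up along $\Delta=1/q$, so piecewise continuity alone is not the whole story). The plan is therefore to exploit integrability together with a nonincreasing integrable majorant $\bar h$ of $h$ on its tail, available for the kernels of interest: if $h(t)\le\bar h(t)$ for $t\ge T$ with $\bar h$ nonincreasing, then $\Delta\,h(k\Delta)\le\int_{(k-1)\Delta}^{k\Delta}\bar h(t)\,\mathrm{d}t$, whence $\Delta\sum_{k\Delta>T}h(k\Delta)\le\int_{T-\Delta}^{\infty}\bar h(t)\,\mathrm{d}t$, a bound that tends to $0$ as $T\to\infty$ uniformly over $\Delta\in(0,1)$. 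Combining the two parts gives $\limsup_{\Delta\to 0^+}K^{(\Delta)}\le\int_0^T h\,\mathrm{d}t+o_T(1)\le K+o_T(1)$, and letting $T\to\infty$ yields $\limsup_{\Delta\to 0^+}K^{(\Delta)}\le K<1$, which by the first paragraph completes the proof.
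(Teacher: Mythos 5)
Your overall architecture is sound: reducing the claim to $\limsup_{\Delta\to0^+}K^{(\Delta)}\le K$, splitting $[0,\infty)$ into a compact piece and a tail, and using Riemann integrability of the piecewise continuous $h$ on $[0,T]$ are all correct steps, and the final $\limsup$-to-$\delta$ argument with $\tilde K=(1+K)/2$ is fine. (For what it is worth, the paper states this lemma without any proof at all, so there is nothing on the paper's side to compare against.) The genuine gap is exactly where you place it: your tail estimate invokes a nonincreasing integrable majorant $\bar h$ of $h$, described as ``available for the kernels of interest''. That is an additional hypothesis appearing nowhere in the lemma, whose only assumptions are nonnegativity, piecewise continuity (finitely many discontinuities with finite one-sided limits), $h\equiv 0$ on $(-\infty,0]$, and $\int h\,\mathrm{d}t<1$. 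As a proof of the lemma \emph{as stated}, the argument therefore does not close.

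However, your suspicion that ``piecewise continuity alone is not the whole story'' is exactly right, and in fact the gap cannot be closed: the lemma as stated is false. Take $\Delta_j:=2^{-j}$ and let $h^{(j)}$ be a comb of disjoint triangular spikes of height $2^{-j}$, centred at the points $k\Delta_j$, $k=1,\dots,4^j$, with half-width $w_j:=2^{-2j-2}<\Delta_j/2$; set $h:=\sum_{j\ge1}h^{(j)}$. Since $\sum_{j\ge1}\sup h^{(j)}=\sum_{j\ge1}2^{-j}<\infty$, the series converges uniformly, so $h$ is continuous (hence piecewise continuous with $D_h=\emptyset$), bounded, vanishes on $(-\infty,0]$, and $\int h\,\mathrm{d}t=\sum_{j\ge1}4^j\cdot2^{-j}\cdot w_j=\sum_{j\ge1}2^{-j-2}=1/4<1$; yet for every $j$ one has $K^{(\Delta_j)}\ge\Delta_j\sum_{k=1}^{4^j}h^{(j)}(k\Delta_j)=2^{-j}\cdot4^j\cdot2^{-j}=1$, so no constants $\delta>0$, $\tilde K<1$ as in \eqref{K_unif_bound} can exist. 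Consequently some extra tail condition---your monotone integrable majorant, eventual monotonicity, bounded variation, or direct Riemann integrability in the renewal-theoretic sense---is genuinely needed; such a condition holds for all standard Hawkes kernels, and under it your argument (Riemann sums on $[0,T]$, plus the bound $\Delta\,\bar h(k\Delta)\le\int_{(k-1)\Delta}^{k\Delta}\bar h(t)\,\mathrm{d}t$ on the tail) is complete. In short: you have not proved the stated lemma, but you have correctly proved a repaired version of it, and your diagnosis implicitly refutes the version in the paper.
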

\noindent In the sequel of the section, let $\delta>0$ and $K^{(\Delta)}\leq \tilde{K}<1,\, \Delta\in(0,\delta),$ be as in the lemma above. We state the main mathematical result of our paper:
\begin{theorem}\label{weak_convergence}
Let $N$ be a Hawkes process with immigration intensity $\eta$ and reproduction intensity $h$ as in Lemma~\ref{K_tilde}. For $\Delta \in(0,\delta)$, let $\(X^{(\Delta)}_n\)$ be an INAR($\infty$) sequence with immigration parameter $\Delta\eta$ and reproduction coefficients $\Delta h(k\Delta),\, k\in\N$. From these sequences, we define a family of point processes by 
\begin{equation}
N^{(\Delta)}(A) := \sum\limits_{k: k\Delta\in{A}}X^{(\Delta)}_k,\quad A\in\B,\, \Delta \in(0,\delta)\label{derived_pp}.
\end{equation}
Then, we have that
$$
N^{(\Delta)}\stackrel{\mathrm{w}}{\longrightarrow} N\quad\text{for}\quad \Delta\rightarrow 0.
$$
\end{theorem}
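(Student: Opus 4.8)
The plan is to prove weak convergence through the Laplace functionals. Since the vague topology is metrizable and convergence of Laplace functionals on $C^+_K$ is a standard criterion for weak convergence of point processes (equivalently, for convergence of their finite-dimensional distributions, the characterization recalled in the paper), it suffices to show $\Psi_{N^{(\Delta)}}[f]\to\Psi_N[f]$ for every $f\in C^+_K$. The first observation is that, because $\int f\,\d N^{(\Delta)}=\sum_{k}f(k\Delta)X^{(\Delta)}_k$ is a finite sum, the Laplace functional of $N^{(\Delta)}$ is nothing but the INAR joint moment-generating function evaluated at the grid values of $-f$; by the stationarity of $(X^{(\Delta)}_n)$ we may shift indices so that the finitely many grid points in the support of $f$ carry nonnegative labels, placing us in the one-sided setting of Definition~\ref{mgf}. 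Inserting the immigration parameter $\Delta\eta$ into \eqref{M_X} then gives
$$\Psi_{N^{(\Delta)}}[f]=\exp\Big\{\eta\,\Delta\sum_{i\in\Z}\big(\Phi^{(\Delta)}(i\Delta)-1\big)\Big\},\qquad \Phi^{(\Delta)}(t):=M_{(F^{(\Delta)}_n)}\big((-f(t+n\Delta))_{n\in\N_0}\big).$$
Since $f\geq 0$, every argument sequence is nonpositive, so all moment-generating functions lie in $[0,1]$ and are automatically finite; this sidesteps the finiteness analysis of Theorem~\ref{INAR_mg}. Comparing with Proposition~\ref{hawkes_laplace}, the target is $\Psi_N[f]=\exp\{\eta\int_{-\infty}^\infty(\Phi(t)-1)\,\d t\}$ with $\Phi(t):=\Psi_F[f(t+\cdot)]$, so the proof reduces to the convergence of the inner family functionals $\Phi^{(\Delta)}\to\Phi$ and of the outer Riemann sum.

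The heart of the argument, and the \emph{main obstacle}, is the inner convergence. From \eqref{M_F} the grid function $\Phi^{(\Delta)}$ solves the discrete fixed-point equation $\Phi^{(\Delta)}(t)=\e^{-f(t)}\exp\{\Delta\sum_{k\geq 1} h(k\Delta)(\Phi^{(\Delta)}(t+k\Delta)-1)\}$, whereas by Proposition~\ref{hawkes_laplace} its continuous counterpart is $\Phi(t)=\e^{-f(t)}\exp\{\int_0^\infty h(s)(\Phi(t+s)-1)\,\d s\}$. Because $f$ has compact support and the family process is supported on $[0,\infty)$, both equations are anchored on the right, $\Phi^{(\Delta)}(t)=\Phi(t)=1$ for $t$ beyond the support of $f$, and may be propagated backward. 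I would estimate $e_\Delta:=\sup_i|\Phi^{(\Delta)}(i\Delta)-\Phi(i\Delta)|$ by splitting the difference of the two exponents into a deterministic Riemann-approximation error $\sup_t|\Delta\sum_k h(k\Delta)(\Phi(t+k\Delta)-1)-\int_0^\infty h(s)(\Phi(t+s)-1)\,\d s|$, which vanishes as $\Delta\to0$ by the piecewise continuity of $h$ (Lemma~\ref{K_tilde}) together with the uniform continuity of the bounded map $\Phi$, and a contraction term bounded by $\Delta\sum_k h(k\Delta)\,e_\Delta=K^{(\Delta)}e_\Delta\leq\tilde K e_\Delta$. Here the uniform bound $K^{(\Delta)}\leq\tilde K<1$ of Lemma~\ref{K_tilde} is exactly what makes the recursion a contraction uniformly in $\Delta$; since all exponents lie in $[-\tilde K,0]$ and $\e^{-f(t)}\leq 1$, the exponential is Lipschitz on the relevant range and the multiplicative difference converts to the additive one. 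This yields $(1-\tilde K)e_\Delta\leq o(1)$, hence the uniform convergence $e_\Delta\to0$.

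For the outer step I would first record that $\Phi$ is continuous and that $1-\Phi$ is integrable: continuity follows from $f\in C^+_K$ by dominated convergence, and $\Phi(t)\to1$ as $t\to\pm\infty$ because the total family size has expectation $1/(1-K)<\infty$, so that the finite intensity measure of $F$ forces $\int f(t+s)\,\E F(\d s)\to0$. Hence $\Delta\sum_i(\Phi(i\Delta)-1)\to\int_{-\infty}^\infty(\Phi(t)-1)\,\d t$ as an ordinary Riemann-sum limit. To replace $\Phi$ by $\Phi^{(\Delta)}$ and to control the sum over the unbounded index set, I would use the linear bound $0\leq1-\Phi^{(\Delta)}(i\Delta)\leq\sum_{n\geq0}f((i+n)\Delta)\,\E F^{(\Delta)}_n=\sum_n f((i+n)\Delta)\beta^{(\Delta)}_n$, obtained from $1-\e^{-x}\leq x$, together with $\sum_n\beta^{(\Delta)}_n\leq 1/(1-\tilde K)$ from Proposition~\ref{MA} and Lemma~\ref{K_tilde}. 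This provides a summable domination, uniform in $\Delta$, of exactly the kind already exploited to justify the dominated-convergence interchange in the INAR($p$)-embedding proposition; combined with the uniform convergence $e_\Delta\to0$ it gives $\Delta\sum_i(\Phi^{(\Delta)}(i\Delta)-1)\to\int(\Phi-1)\,\d t$. Feeding this into the first-step expression for $\Psi_{N^{(\Delta)}}[f]$ yields $\Psi_{N^{(\Delta)}}[f]\to\Psi_N[f]$ and completes the proof. The essential tools throughout are the uniform contraction constant $\tilde K<1$ of Lemma~\ref{K_tilde} and the $[0,1]$-boundedness of all functionals afforded by $f\geq0$; the delicate point remains the inner fixed-point comparison, where the discrete convolution recursion must be shown to converge uniformly to its continuous counterpart.
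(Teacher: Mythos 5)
Your route---convergence of Laplace functionals on $C^+_K$---is exactly the alternative that the paper mentions right after stating Theorem~\ref{weak_convergence} (and records in Proposition~\ref{approximating_laplace}) but deliberately does not pursue: the paper instead proves tightness (Lemma~\ref{tightness}), extracts weak subsequential limits via Prohorov's theorem, and shows that every subsequential limit satisfies the defining intensity equation \eqref{HE1}, whose stationary solution is unique by Proposition~\ref{uniqueness}. Your plan is legitimate in principle, since Laplace-functional convergence on $C^+_K$ is equivalent to weak convergence of point processes, and your inner step is the strongest part of the proposal: comparing the discrete fixed-point equation for $\Phi^{(\Delta)}$ with the continuous one for $\Phi$ from Proposition~\ref{hawkes_laplace}, anchoring both at the value $1$ to the right of the support of $f$, and using that $x\mapsto\e^x$ is $1$-Lipschitz on $(-\infty,0]$ together with the uniform bound $K^{(\Delta)}\leq\tilde K<1$ of Lemma~\ref{K_tilde} to obtain $e_\Delta\leq\rho_\Delta/(1-\tilde K)$. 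That contraction is sound (modulo the same ``without loss of generality $h$ is uniformly continuous on its continuity pieces'' reduction that the paper itself invokes), and the Riemann error $\rho_\Delta$ is indeed uniform in $t$ because $\Phi(t+\cdot)-1$ vanishes outside a window of fixed length: here your observation that $f\geq0$ makes all moment-generating functions lie in $[0,1]$, bypassing the finiteness analysis of Theorem~\ref{INAR_mg}, pays off.

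The genuine gap is in the outer step. You need $\Delta\sum_{i\in\Z}\big(\Phi^{(\Delta)}(i\Delta)-1\big)\to\int\big(\Phi(t)-1\big)\,\mathrm{d}t$, and the uniform convergence $e_\Delta\to0$ only controls the portion of the sum with $i\Delta$ in a fixed compact interval $[-M,T]$: there are of order $(M+T)/\Delta$ such terms, each weighted by $\Delta$. For the tail $i\Delta<-M$ you appeal to ``summable domination, uniform in $\Delta$, of exactly the kind exploited in the INAR($p$) embedding.'' But in that proposition the dominating sequence was a single fixed summable sequence, valid for all $p$ because $\alpha^{(p)}_k\leq\alpha_k$ gives monotone domination by the limiting family's moment-generating function. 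Here your dominating object $\sum_{n}f((i+n)\Delta)\beta^{(\Delta)}_n$ itself depends on $\Delta$, and the measures $(\beta^{(\Delta)}_n)_n$ live on different grids; there is no single dominating sequence, so dominated convergence does not apply as stated. What is actually needed is tightness of the expected family measures uniformly in $\Delta$, namely $\sup_{\Delta<\delta}\sum_{n:\,n\Delta>M}\beta^{(\Delta)}_n\to0$ as $M\to\infty$. This does \emph{not} follow from the total-mass bound $\sum_n\beta^{(\Delta)}_n\leq1/(1-\tilde K)$: that bound alone allows expected family mass to sit at lags of order $1/\Delta$, i.e., to escape to $-\infty$ in $i$ and inflate the tail of your sum while leaving every fixed compact window unaffected. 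Closing the gap requires a uniform tail bound on the Riemann sums of $h$, $\sup_{\Delta<\delta}\Delta\sum_{k:\,k\Delta>R}h(k\Delta)\to0$ as $R\to\infty$ (a direct-Riemann-integrability-type property, of the same nature as but strictly stronger than Lemma~\ref{K_tilde}), propagated to $\beta^{(\Delta)}=\sum_{g\geq0}(\alpha^{(\Delta)})^{*g}$ through the convolution powers. To be fair, the paper's process-level proof faces the same issue at its truncation level $-M$ and treats it tersely; but in your write-up this step is asserted as a routine dominated-convergence application, and it is not.
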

\noindent Our proof uses the standard weak-convergence approach---as followed in the Hawkes context, e.g.,\ by \citet{bremaud96}. First, tightness of the approximating family is established. By Prohorov's theorem, tightness yields weak subsequential limits for all subsequences. Then we show that all those potential weak subsequential limits have the same distribution as the Hawkes process. This will establish the result. An alternative approach would be convergence of Laplace functionals that are given by  
\begin{proposition}\label{approximating_laplace}
For some $\Delta\in(0,\delta)$, let $N^{(\Delta)}$ be as in Theorem~\ref{weak_convergence}.
Let $f$ be a nonnegative measurable function with compact support. Then the Laplace functional of $N^{(\Delta)}$ evaluated at $f$ is
\begin{align*}
&\Psi_{N^{(\Delta)}}[f]= \exp\left\{\Delta\eta\sum\limits_{i\in\Z} \bigg(\Psi^{(\Delta)}_{F^{(\Delta)}}\Big(\big(f((i+n)\Delta)\big)_{n\in\N_0}\Big)-1\bigg)\right\}.
\end{align*}
Here, the function $\Psi^{(\Delta)}_{F^{(\Delta)}}$ operating on sequences $(s_n)_{n\in\N_0}\in c_{00}\([0,\infty)\)$ is a solution to
\begin{align*}
\Psi^{(\Delta)}_{F^{(\Delta)}}\big((s_n)_{n\in\N_0}\big) 
&= e^{-s_0}\exp\left\{\sum\limits_{k=1}^\infty \Delta h(k\Delta)\Big(\Psi^{(\Delta)}_{F^{(\Delta)}}\big((s_{k+n})_{n\in\N_0}\big)-1\Big)\right\}.
\end{align*}
\end{proposition}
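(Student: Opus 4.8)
The plan is to recognise the Laplace functional of $N^{(\Delta)}$ as a joint moment-generating function of the underlying INAR($\infty$) sequence evaluated at nonpositive arguments, and then to invoke Theorem~\ref{INAR_mg} directly, a single sign change turning the moment-generating recursions \eqref{M_X}--\eqref{M_F} into the stated Laplace recursions. Thus no new probabilistic input is required beyond the moment-generating-function theorem already proved.

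First I would rewrite the defining integral. By \eqref{derived_pp}, $N^{(\Delta)}$ places mass $X^{(\Delta)}_k$ at the point $k\Delta$ for every $k\in\Z$, and since $f$ has compact support the integral collapses to the finite sum
$$
\int_{-\infty}^\infty f(t)\,N^{(\Delta)}(\mathrm{d}t)=\sum_{k\in\Z} f(k\Delta)\,X^{(\Delta)}_k,
$$
so that $\Psi_{N^{(\Delta)}}[f]=\E\exp\{-\sum_{k\in\Z} f(k\Delta)X^{(\Delta)}_k\}$. Here $N^{(\Delta)}$ is almost surely a genuine point measure because each $X^{(\Delta)}_k$ is $\N_0$-valued and only finitely many atoms fall in any bounded set; and because $f\geq 0$ the expression inside the expectation is bounded by $1$, so finiteness is automatic and one does not even need the $\delta$-bound of Theorem~\ref{INAR_mg}.

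Second, I would reconcile the bilateral atom indexing of $N^{(\Delta)}$ with the unilateral convention $(t_n)_{n\in\N_0}$ of Definition~\ref{mgf}. Using strict stationarity of $(X^{(\Delta)}_n)$, I shift the index by a suitable integer $r$ (chosen so the shifted support of $f$ lies in $\N_0$) and set $t_n:=-f((n+r)\Delta)$, $n\in\N_0$. Then $(t_n)\in c_{00}\big((-\infty,0]\big)\subset c_{00}\big((-\infty,\delta]\big)$, so Theorem~\ref{INAR_mg} applies with $\alpha_0=\Delta\eta$ and $\alpha_k=\Delta h(k\Delta)$, giving $\Psi_{N^{(\Delta)}}[f]=M_{(X^{(\Delta)}_n)}\big((t_n)\big)$.

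Third, I would substitute these parameters into \eqref{M_X}, reindex the sum over $i\in\Z$ to absorb the shift $r$, and define $\Psi^{(\Delta)}_{F^{(\Delta)}}\big((s_n)\big):=M_{(F^{(\Delta)}_n)}\big((-s_n)\big)$. The convention $t_m:=0$ for $m\leq 0$ guarantees that every family-process term whose window misses the support of $f$ contributes $M_{(F^{(\Delta)}_n)}(\mathbf{0})-1=0$, so the reindexed sum runs cleanly over all $i\in\Z$ with summand $\Psi^{(\Delta)}_{F^{(\Delta)}}\big((f((i+n)\Delta))_{n\in\N_0}\big)-1$, which is the first displayed identity; the recursion for $\Psi^{(\Delta)}_{F^{(\Delta)}}$ then follows verbatim from \eqref{M_F} under the same sign flip, with $e^{s_0}$ replaced by $e^{-s_0}$. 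The computations are routine once set up, so the only genuine care—and the main (mild) obstacle—is the index bookkeeping: ensuring that the stationarity shift $r$ together with the $t_m=0$ convention reproduces exactly the symmetric $\sum_{i\in\Z}$ appearing in the claim, with no boundary terms gained or lost.
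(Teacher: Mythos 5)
Your proposal is correct and follows essentially the same route as the paper's proof: rewrite the Laplace functional as the joint moment-generating function $M_{(X^{(\Delta)}_n)}$ evaluated at $\big(-f(n\Delta)\big)$, use stationarity to shift the support of $f$ into $\N_0$, and invoke Theorem~\ref{INAR_mg} with $\alpha_0=\Delta\eta$, $\alpha_k=\Delta h(k\Delta)$, setting $\Psi^{(\Delta)}_{F^{(\Delta)}}\big((s_n)\big):=M_{(F^{(\Delta)}_n)}\big((-s_n)\big)$. Your added remark that finiteness is automatic because the arguments are nonpositive is a small, correct refinement the paper leaves implicit.
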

\begin{proof}
Plugging in definitions, we obtain
\begin{align*}
\Psi_{N^{(\Delta)}}[f] = \E\exp\left\{-\int_\mathbb{R} f(t)N^{(\Delta)}\(\mathrm{d}t\)\right\}
&= \E\exp\left\{-\sum_{n \in\Z} X^{(\Delta)}_nf(n\Delta)\right\}.
\end{align*}
By stationarity of $(X^{(\Delta)}_n)$, we may assume without loss of generality that $f(t) = 0,\, t< 0.$ We apply formulas~\eqref{M_X} and~\eqref{M_F} for the joint moment-generating function of the INAR($\infty$) process $\(X^{(\Delta)}_n\)$ and the corresponding generic family process $\(F^{(\Delta)}_n\)$:
  $$\Psi_{N^{(\Delta)}}[f]  = M_{\(X^{(\Delta)}_n\)}\Big(\big(-f(0), -f(\Delta), -f(2\Delta), \dots\big)\Big) \stackrel{\eqref{M_X}}{=} \exp\left\{\alpha_0\sum\limits_{i\in\Z} \Bigg(M_{(F^{(\Delta)}_n)}\bigg(\Big(-f\big((n+i)\Delta\big)\Big)_{n\in\N_0}\bigg)-1\Bigg)\right\}.
  $$
 We set 
  $
  \Psi_{F^{(\Delta)}}^{(\Delta)}\big((s_n)\big):=M_{\(F^{(\Delta)}\)}\big((-s_n)\big), \, \(s_n\)\in c_{00}\big( [0,\infty)\big)$.
  This establishes the lemma.
\end{proof}
\noindent The similarities between the formulas in Proposition~\ref{approximating_laplace} above and the corresponding equations for the Hawkes process in Proposition~\ref{hawkes_laplace} are striking. Still, rather than establishing the convergence result from Theorem~\ref{weak_convergence} via the Laplace functionals, we choose a more direct reasoning on the process level that contains useful information on the approximating point process family as a by-product. Some of the properties that are necessary for this convergence proof are collected in the following lemmas:
\begin{lemma}\label{expectation}
For any $\Delta\in(0,\delta)$, let $N^{(\Delta)}$ be a point process as in Theorem~\ref{weak_convergence}. Then, for $A\in\mathcal{B}$, we have that
\begin{align}
A\cap \{k\Delta:\, k\in\Z\}  = \emptyset 
\quad \Rightarrow \quad
N^{(\Delta)}\big(A \big)\stackrel{}{=} 0,\; \text{almost surely} .\label{nullsets}
\end{align}
For the expectation, we find that
\begin{align}
\E N^{(\Delta)} \big(\{k\Delta\}\big) = \Delta \frac{\eta}{1- K^{(\Delta)}} <  \delta \frac{\eta}{1- \tilde{K}},\quad k\in\Z,\ \label{EN(kDelta)}
\end{align}
and, for $a<b$,
\begin{align}
\E N^{(\Delta)}\big([a,b]\big) <(b-a+2\delta)\frac{\eta}{1-\tilde{K}} <\infty. \label{EN[a,b]}
\end{align}
\end{lemma}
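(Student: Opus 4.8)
The plan is to read off all three assertions from the definition of $N^{(\Delta)}$ in \eqref{derived_pp}, combined with the mean formula for INAR($\infty$) sequences from Theorem~\ref{existence} and the uniform bound from Lemma~\ref{K_tilde}. The crucial preliminary observation is that the sequence $(X^{(\Delta)}_n)$ from Theorem~\ref{weak_convergence} is genuinely a stationary INAR($\infty$) process: its immigration parameter is $\Delta\eta\geq 0$ and its reproduction mean is exactly $K^{(\Delta)}=\Delta\sum_{k=1}^\infty h(k\Delta)$, which by Lemma~\ref{K_tilde} satisfies $K^{(\Delta)}\leq\tilde{K}<1$ for every $\Delta\in(0,\delta)$. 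Hence the hypotheses of Theorem~\ref{existence} hold uniformly in $\Delta$, and that theorem yields $\E X^{(\Delta)}_n\equiv \Delta\eta/(1-K^{(\Delta)})$ for all $n\in\Z$. Everything else is bookkeeping on top of this identity.

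For the first claim \eqref{nullsets}, I would note it is immediate from the definition: if $A$ contains no lattice point $k\Delta$, $k\in\Z$, then the index set $\{k:\,k\Delta\in A\}$ in \eqref{derived_pp} is empty, so $N^{(\Delta)}(A)$ is an empty sum and equals $0$ deterministically, in particular almost surely. For the second claim \eqref{EN(kDelta)}, the only lattice point in $\{k\Delta\}$ is $k\Delta$ itself, so $N^{(\Delta)}(\{k\Delta\})=X^{(\Delta)}_k$; taking expectations and inserting the mean formula gives the stated equality $\E N^{(\Delta)}(\{k\Delta\})=\Delta\eta/(1-K^{(\Delta)})$. The strict inequality then follows from $\Delta<\delta$ together with $0<1-\tilde{K}\leq 1-K^{(\Delta)}$ and $\eta>0$, whence $\Delta\eta/(1-K^{(\Delta)})\leq \Delta\eta/(1-\tilde{K})<\delta\eta/(1-\tilde{K})$.

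For the third claim \eqref{EN[a,b]}, I would write $N^{(\Delta)}([a,b])=\sum_{k:\,k\Delta\in[a,b]}X^{(\Delta)}_k$, take expectations using linearity and stationarity, and obtain $\E N^{(\Delta)}([a,b])=\#\{k\in\Z:\,a\leq k\Delta\leq b\}\cdot\Delta\eta/(1-K^{(\Delta)})$. The one spot requiring a little care is the lattice-point count: the integers $k$ with $k\Delta\in[a,b]$ are exactly the integers lying in $[a/\Delta,b/\Delta]$, an interval of length $(b-a)/\Delta$, which contains at most $(b-a)/\Delta+1$ of them. Multiplying the per-point mean by this count and using $\Delta<\delta$ and $K^{(\Delta)}\leq\tilde{K}$ gives $\E N^{(\Delta)}([a,b])\leq (b-a+\Delta)\eta/(1-\tilde{K})<(b-a+2\delta)\eta/(1-\tilde{K})<\infty$, so the stated bound holds with room to spare.

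I do not anticipate a genuine obstacle: each step reduces to the mean formula of Theorem~\ref{existence}, the uniform bound of Lemma~\ref{K_tilde}, and elementary counting. If anything deserves attention, it is (i) confirming that the reproduction mean stays strictly below $1$ uniformly in $\Delta$, which is precisely the content of Lemma~\ref{K_tilde} and licenses the use of Theorem~\ref{existence}, and (ii) the estimate that a closed interval of real length $L$ contains at most $\lfloor L\rfloor+1$ integers, applied with $L=(b-a)/\Delta$. The generous constant $2\delta$ in the statement leaves ample slack for this count, so no sharp estimate is needed.
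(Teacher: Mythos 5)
Your proposal is correct and follows essentially the same route as the paper: all three claims are read off from the definition \eqref{derived_pp}, combined with the mean formula $\E X^{(\Delta)}_n = \Delta\eta/(1-K^{(\Delta)})$ from Theorem~\ref{existence} and the uniform bound $K^{(\Delta)}\leq\tilde{K}<1$ from Lemma~\ref{K_tilde}, with \eqref{EN[a,b]} reduced to counting lattice points in $[a,b]$. The only (inconsequential) difference is that your lattice count $(b-a)/\Delta+1$ is slightly sharper than the paper's bound $\lceil(b-a)/\Delta\rceil+1<(b-a)/\Delta+2$, so you land inside the stated constant with even more room to spare.
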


\begin{proof}
Claims \eqref{nullsets} and \eqref{EN(kDelta)} directly follow from the definition of $N^{(\Delta)}$ in \eqref{derived_pp} together with Proposition~\ref{existence} and Lemma~\ref{K_tilde}. For \eqref{EN[a,b]}, we find that the number of grid points in the interval $[a,b]$ is less or equal $\lceil(b-a)/\Delta\rceil + 1$. To get rid of the ceiling function, we observe that
$\lceil(b-a)/\Delta\rceil + 1 < (b-a)/\Delta + 2$. 
 Combining this with the facts that $K^{(\Delta)}\leq\tilde{K}$ and $\Delta <\delta$ together with \eqref{nullsets} and \eqref{EN(kDelta)} yields inequality \eqref{EN[a,b]}.
\end{proof}
\noindent For the second moments, we find 
\begin{lemma} \label{unifvar}
For $\(N^{(\Delta)}\)_{\Delta\in(0,\delta)}$, the approximating family of point processes from Theorem~\ref{weak_convergence}, we have that
$$
\sup\limits_{\Delta\in(0,\delta)} \Var\(N^{(\Delta)}(A)\) <\infty, \quad A\in\mathcal{B}_b.
$$
\end{lemma}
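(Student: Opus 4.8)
The plan is to reduce the variance of $N^{(\Delta)}(A)$ to a finite double sum of autocovariances of the underlying INAR($\infty$) sequence $(X^{(\Delta)}_n)$ and then exploit the explicit nonnegativity and summability of those autocovariances from Proposition~\ref{autocovariance}. First I would enclose the bounded set $A$ in a bounded interval $[a,b]\supseteq A$ and observe, by \eqref{nullsets}, that $N^{(\Delta)}(A)=\sum_{k\in I_A}X^{(\Delta)}_k$, where $I_A:=\{k\in\Z:\,k\Delta\in A\}$ is a \emph{finite} index set. Writing $R^{(\Delta)}(j):=\Cov\big(X^{(\Delta)}_n,X^{(\Delta)}_{n+j}\big)$ for the autocovariance function of the stationary sequence $(X^{(\Delta)}_n)$, this gives the identity
$$
\Var\big(N^{(\Delta)}(A)\big)=\sum_{k\in I_A}\sum_{l\in I_A}R^{(\Delta)}(k-l).
$$

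The key structural fact I would invoke is that all autocovariances are nonnegative; see \eqref{R(j)}. Because of this, for each fixed $k\in I_A$ the inner sum may be extended to the whole of $\Z$, and the finite number of outer summands may be enlarged from $I_A$ to $I_{[a,b]}$, yielding
$$
\Var\big(N^{(\Delta)}(A)\big)\leq\big|I_{[a,b]}\big|\,\sum_{j\in\Z}R^{(\Delta)}(j).
$$
Using the symmetry $R^{(\Delta)}(-j)=R^{(\Delta)}(j)$ together with the bound \eqref{covsum_bound}, and recalling that the approximating sequence $(X^{(\Delta)}_n)$ has immigration parameter $\alpha_0=\Delta\eta$ and reproduction mean $K^{(\Delta)}\leq\tilde K<1$ (Lemma~\ref{K_tilde}), I would estimate
$$
\sum_{j\in\Z}R^{(\Delta)}(j)\leq 2\sum_{j=0}^\infty R^{(\Delta)}(j)\leq\frac{2\Delta\eta}{\big(1-K^{(\Delta)}\big)^3}\leq\frac{2\Delta\eta}{(1-\tilde K)^3}.
$$

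The decisive cancellation occurs when this is combined with the cardinality estimate $\big|I_{[a,b]}\big|<(b-a)/\Delta+2$ already used in the proof of Lemma~\ref{expectation}: the factor $\Delta$ carried by the immigration parameter exactly offsets the factor $1/\Delta$ coming from the number of grid points in $[a,b]$, so that
$$
\Var\big(N^{(\Delta)}(A)\big)<\Big(\tfrac{b-a}{\Delta}+2\Big)\frac{2\Delta\eta}{(1-\tilde K)^3}=\frac{2\eta\big((b-a)+2\Delta\big)}{(1-\tilde K)^3}<\frac{2\eta\big((b-a)+2\delta\big)}{(1-\tilde K)^3}.
$$
Since the right-hand side does not depend on $\Delta$, taking the supremum over $\Delta\in(0,\delta)$ concludes the argument. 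I expect the only genuinely delicate point to be this bookkeeping that makes the cancellation transparent, namely recognizing that the $\Delta$-scaling of the immigration parameter is precisely what keeps the variance bounded as the grid is refined; the remaining ingredients are direct applications of the nonnegativity \eqref{R(j)} and the summability bound \eqref{covsum_bound} from Proposition~\ref{autocovariance}.
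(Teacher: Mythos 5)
Your proposal is correct and follows essentially the same route as the paper's own proof: the same double-sum expansion of the variance in terms of the autocovariances $R^{(\Delta)}$, the same use of their nonnegativity from \eqref{R(j)} to enlarge the summation range, the summability bound \eqref{covsum_bound}, and the same grid-point count producing the cancellation of $\Delta$ against $1/\Delta$. The only cosmetic difference is that the paper works with $\sup A-\inf A$ directly rather than first enclosing $A$ in an interval $[a,b]$.
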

\begin{proof}
Let $A\in\mathcal{B}_b$ be a bounded Borel set.
Note that
\begin{align*}
\Var \(N^{(\Delta)}(A)\)=\Var\(\,\sum\limits_{n:\,n\Delta\in A}X^{(\Delta)}_n\)
=\sum\limits_{m:\,m\Delta\in A}\sum\limits_{n:\,n\Delta\in A}\Cov\(X^{(\Delta)}_n,X^{(\Delta)}_m\)
=\sum\limits_{m:\,m\Delta\in A}\sum\limits_{n:\,n\Delta\in A}R^{(\Delta)}(n-m),\end{align*}
where $R^{(\Delta)}$ is the autocovariance function of the INAR($\infty$) sequence from Proposition~\ref{autocovariance}. From this proposition, we know that $R^{(\Delta)}(k)\geq 0,\,k\in\Z$ and $\sum_{k=0}^\infty R^{(\Delta)}(k) \leq \eta\Delta(1-K^{(\Delta)})^{-3}$. Applying these results yields
\begin{align*}
&\Var \( N^{(\Delta)}(A)\)\\
&\leq\sum\limits_{m:\,m\Delta\in A}\sum\limits_{n\in\Z} R^{(\Delta)}(n-m)
\leq\sum\limits_{m:\,m\Delta\in A}\frac{2\eta\Delta}{(1-K^{(\Delta)})^3}
\leq \(2+\frac{\sup A - \inf A}{\Delta}\)\frac{2\eta\Delta}{(1-\tilde{K})^3}
\leq\(2\delta +\sup A - \inf A\) \frac{2\eta}{(1-\tilde{K})^3},
\end{align*}
where $\tilde{K}<1$ does not depend on the choice of $\Delta\in(0,\delta)$; see Lemma~\ref{K_tilde}.
\end{proof}
\noindent A family of random variables $\(Y_i\)_{i\in I}$ is \emph{uniformly integrable} if
$
\lim_{M\to \infty} \sup_{i\in I} \E\[1_{|Y_i|>M}|Y_i|\] = 0.
$
We obtain uniform integrability of the random variables in question as a corollary from Lemma~\ref{unifvar} above:
\begin{lemma}\label{unifint}
Let $\(N^{(\Delta)}\)_{\Delta\in(0,\delta)}$ be the approximating family of point processes from Theorem~\ref{weak_convergence} and $A\in\mathcal{B}_b$. Then we have that the family of random variables $\(N^{(\Delta)}(A)\)_{\Delta\in(0,\delta)}$ is uniformly integrable.
\end{lemma}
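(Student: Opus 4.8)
The plan is to deduce uniform integrability from a bound on second moments that is uniform in $\Delta$. I would first record the elementary fact that any family $\(Y_i\)_{i\in I}$ with $\sup_{i\in I}\E\[Y_i^2\]<\infty$ is automatically uniformly integrable: for any $M>0$ and any $i$, on the event $\{|Y_i|>M\}$ we have $|Y_i|\leq Y_i^2/M$ (since there $|Y_i|/M>1$), whence $\E\[1_{\{|Y_i|>M\}}|Y_i|\]\leq \E\[Y_i^2\]/M$; taking the supremum over $i$ and then letting $M\to\infty$ gives the claim. (This is just the simplest case of the de la Vallée Poussin criterion.) Consequently it suffices to establish that $\sup_{\Delta\in(0,\delta)}\E\big[\(N^{(\Delta)}(A)\)^2\big]<\infty$.

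Next I would split the second moment as $\E\big[\(N^{(\Delta)}(A)\)^2\big]=\Var\(N^{(\Delta)}(A)\)+\(\E N^{(\Delta)}(A)\)^2$ and bound the two terms separately, uniformly in $\Delta$. The variance term is already controlled: Lemma~\ref{unifvar} states precisely that $\sup_{\Delta\in(0,\delta)}\Var\(N^{(\Delta)}(A)\)<\infty$ for every $A\in\mathcal{B}_b$. For the expectation term I would use that $A$ is bounded, so $A\subseteq[\inf A,\sup A]$; since $N^{(\Delta)}$ is a nonnegative measure, monotonicity gives $N^{(\Delta)}(A)\leq N^{(\Delta)}\([\inf A,\sup A]\)$, and then \eqref{EN[a,b]} in Lemma~\ref{expectation} bounds $\E N^{(\Delta)}(A)$ by $(\sup A-\inf A+2\delta)\,\eta/(1-\tilde K)$, a constant independent of $\Delta$. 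Squaring this uniform bound controls $\(\E N^{(\Delta)}(A)\)^2$ uniformly in $\Delta$.

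Combining the two uniform bounds yields $\sup_{\Delta\in(0,\delta)}\E\big[\(N^{(\Delta)}(A)\)^2\big]<\infty$, and the elementary $L^2$ criterion from the first paragraph then delivers the asserted uniform integrability. I do not expect a genuine obstacle here: both ingredients are already supplied by the two preceding lemmas, so the statement is essentially a corollary, exactly as the surrounding text announces. The only mild point requiring care is the reduction, for the expectation bound, from a general bounded Borel set $A$ to an enclosing interval---the variance bound of Lemma~\ref{unifvar} being already stated for arbitrary $A\in\mathcal{B}_b$ and hence needing no such step.
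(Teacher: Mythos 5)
Your proof is correct and follows exactly the route the paper intends: Lemma~\ref{unifint} is stated there without a written proof, as an immediate corollary of the uniform variance bound of Lemma~\ref{unifvar}, and your argument supplies precisely the standard missing details---the uniform second-moment bound obtained from that variance bound together with the uniform mean bound \eqref{EN[a,b]} of Lemma~\ref{expectation}, followed by the elementary $L^2$ criterion for uniform integrability. No gap remains.
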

\noindent A family of probability measures $\(\P^{(i)}\)_{i\in I}$ on $\(M_p,\sigma\(\mathcal{M}_p\)\)$ is \emph{uniformly tight} if, for all $\varepsilon>0$, there exists a compact set $K\subset M_p$ such that
$\P^{(i)}[K^c]<\varepsilon,\, i\in I$.
\begin{lemma}\label{tightness}
The family of the probability measures $\(\P^{(\Delta)}\)_{0<\Delta< \delta}$ on $\(M_p, \sigma\(\mathcal{M}_p\)\)$ corresponding to the random point processes
$
\(N^{(\Delta)}\)_{0<\Delta< \delta}
$
is uniformly tight.
\end{lemma}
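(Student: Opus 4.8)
The plan is to reduce uniform tightness of the point processes to a one-dimensional tightness statement and then to settle the latter with the uniform first-moment bound from Lemma~\ref{expectation}. The vague topology on $M_p$ is metrizable and turns $M_p$ into a Polish space, so by Prohorov's theorem uniform tightness is equivalent to relative compactness in distribution. I would rely on the standard characterization of relative compactness for point processes in the vague topology (see \citet{resnick87}, Section~3, and \citet{kallenberg83}): the family $(\P^{(\Delta)})_{0<\Delta<\delta}$ is uniformly tight if and only if, for every $f\in C^+_K(\R)$, the family of nonnegative real random variables $\{\int f(t)\,N^{(\Delta)}(\mathrm{d}t)\}_{0<\Delta<\delta}$ is tight in $[0,\infty)$.

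Next I would reduce to bounded intervals. Every $f\in C^+_K(\R)$ is bounded by $\|f\|_\infty$ and supported in some bounded interval $[a,b]$, so that
\begin{equation*}
\int f(t)\,N^{(\Delta)}(\mathrm{d}t)\;\leq\;\|f\|_\infty\, N^{(\Delta)}\big([a,b]\big),\quad 0<\Delta<\delta.
\end{equation*}
It therefore suffices to show that, for each fixed bounded interval $[a,b]$, the family $\{N^{(\Delta)}([a,b])\}_{0<\Delta<\delta}$ of $\N_0$-valued random variables is tight in $[0,\infty)$.

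The final step is the uniform mean bound. By \eqref{EN[a,b]} in Lemma~\ref{expectation} we have
\begin{equation*}
\E N^{(\Delta)}\big([a,b]\big)\;<\;(b-a+2\delta)\,\frac{\eta}{1-\tilde{K}}\;=:\;C_{a,b}\;<\;\infty,
\end{equation*}
with $C_{a,b}$ not depending on $\Delta$. Markov's inequality then gives $\P[N^{(\Delta)}([a,b])>M]\leq C_{a,b}/M$ for all $0<\Delta<\delta$, whence $\sup_{0<\Delta<\delta}\P[N^{(\Delta)}([a,b])>M]\to 0$ as $M\to\infty$. This is exactly the required one-dimensional tightness, and together with the reduction above and the vague relative-compactness criterion it yields uniform tightness of $(\P^{(\Delta)})_{0<\Delta<\delta}$.

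I expect the only genuine subtlety to lie in correctly invoking the tightness criterion for the vague topology; once that criterion is in place, the estimates follow immediately from the uniform mean bound of Lemma~\ref{expectation}. In particular, the first moment alone suffices here, so the variance bound of Lemma~\ref{unifvar} and the uniform integrability of Lemma~\ref{unifint}---although needed afterwards to pin down the weak limit---are not used in this tightness argument.
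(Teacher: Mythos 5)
Your proof is correct and follows essentially the same route as the paper: the paper invokes Proposition 11.1.VI of \citet{daley03} to reduce uniform tightness to uniform stochastic boundedness of $N^{(\Delta)}\big([a,b]\big)$ over compact intervals, and then concludes with Markov's inequality and the uniform mean bound \eqref{EN[a,b]} from Lemma~\ref{expectation}, exactly as you do. Your detour through $C^+_K$ test functions is merely a reformulation of that same tightness criterion, and your closing observation that the first moment alone suffices (with Lemmas~\ref{unifvar} and~\ref{unifint} reserved for identifying the limit) matches the paper's proof as well.
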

\begin{proof}
The claim follows with Proposition 11.1.VI.\ from \citet{daley03} if for all compact intervals $[a,b]\subset\R$ and for all $\epsilon>0$ there exists an $M<\infty$ such that
$$
 \sup\limits_{\Delta \in (0,\delta)}\P\bigg[N^{(\Delta)}\big([a,b]\big)>M\bigg]<\epsilon.
$$
 The uniform boundedness of these probabilities is a consequence of Lemma~\ref{expectation} and Markov inequality: for any $\epsilon > 0$ and $a < b$, let $M_{\epsilon}:=   (b-a+2\delta)\eta /(1-\tilde{K})$, where $\delta>0$ and $\tilde{K}<1$ as in Lemma~\ref{K_tilde}. Then we have that
$$
\P\bigg[N^{(\Delta)}\big([a,b]\big)>M_{\epsilon}\bigg]\leq \frac{\E N^{(\Delta)}\big([a,b]\big)}{M_{\epsilon} }<
 (b-a+2\delta)\frac{\eta}{M_{\epsilon} (1-\tilde{K})}= \epsilon,\quad \Delta\in(0,\delta).
$$
\end{proof}

\begin{proof}\emph{(Theorem~\ref{weak_convergence})}
As a consequence of Lemma~\ref{tightness}, the family of probability measures $\(\P^{(\Delta)}\)_{\Delta\in(0,\delta)}$ that corresponds to the family of point processes
$
\(N^{(\Delta)}\)_{\Delta\in(0,\delta)}$ is relatively compact for weak convergence by Prohorov's theorem; see \citet{daley03}, Theorem A.2.4.I. So every sequence in $\(\P^{\Delta}\)_{\Delta\in(0,\delta)}$, respectively, 
$\(N^{(\Delta)}\)_{\Delta\in(0,\delta)}$, contains a weakly convergent subsequence.
In particular, for any zero sequence in $(0,\delta)$, we can find a subsequence $( \Delta_n)$ such that $\(N^{\(\Delta_n\)}\)$ converges weakly to some point process $N^*$.
If the distribution of $N^*$ does not depend on the initial choice of the subsequence, it follows that the original sequence converges weakly to $N^*$; see Theorem 2.3.\ in \citet{billingsley68}.
Reconsider the implicit defining-equation \eqref{HE1} 
from Definition~\ref{hawkes_definition}. By Proposition~\ref{uniqueness}, we know that this equation determines the distribution of the solving process. So, for the proof of Theorem~\ref{weak_convergence}, it suffices to show that any subsequential limit candidate $N^*$ solves \eqref{HE1}. Furthermore, one can show that it suffices to prove \eqref{HE1} for $A^*\in\frak{B}_a^{N^*}$, where $\frak{B}_a^{N^*}$ is a semiring of sets that generates the $\sigma$-algebra $\mathcal{H}^{N^*}_a$; see~\eqref{history}. A semiring is a class of sets $\mathcal{A}$ such that for any pair $A,B \in \mathcal{A} $ one has (i) $A\cap B \in  \mathcal{A}$ and (ii) $(A\cup B) \setminus (A\cap B) = \cup_{i=1}^nA_i$ for some $n\in\N$, $\(A_i\)\subset\mathcal{A}$ and $A_i\cap A_j =\emptyset,\, i,j =1,\dots,n$. We consider, for any $a\in\R$ and any point process $N$,
\begin{align}
\frak{B}_a^{N}:= &\bigg\{
\big \{ \omega\in\Omega:\, N\((s_1,t_1]\)(\omega)\in D_1,\dots, N\((s_k,t_k]\)(\omega) \in D_k 
\big\}
: \;-\infty < s_i<t_i\leq a,\, D_i\subset \N_0,\, k\in\N\bigg\}\label{semiring}.
\end{align}
One can check that the set system $\frak{B}_a^{N}$ is indeed a semiring. 
Summarizing the above, for the proof of Theorem~\ref{weak_convergence}, it suffices to establish 
\begin{align}
\E\bigg[1_{A^*} N^*\big((a,b]\big)\bigg] = \E\[1_{A^*}\int\limits_{a}^b\lambda(s|N^*)\,\mathrm{d}s\], \quad a < b , \,A^*\in\mathcal{H}^{N^*}_a. \label{HE}
\end{align}
First, let us establish a discrete version of~\eqref{HE} for the approximating sequence: set $N_n:= N^{(\Delta_n)}$ for all $\Delta_n$ in the chosen subsequence. For $a <b$ and $A_n  \in \frak{B}_a^{N_n}$, we find that
\begin{align*}
\E\bigg[1_{A_n}N_n\big((a,b]\big)\bigg] &= \E\[1_{A_n} \sum\limits_{k:\, k\Delta_n\in(a,b]}X_{k}^{( \Delta_n)}\] \\
&=  \E\[1_{A_n} \sum\limits_{k:\, k \Delta_n\in(a,b]} \(\varepsilon^{( \Delta_n)}_k+ \sum\limits_{l=1}^\infty \big( \Delta_n h(l \Delta_n)\big)\circ X_{k-l}^{( \Delta_n)}\)\] \\
&=  \E\[1_{A_n} \sum\limits_{k:\, k \Delta_n\in(a,b]} \( \Delta_n\eta+ \sum\limits_{l=1}^\infty \Delta _nh(l \Delta_n)X_{k-l}^{( \Delta_n)}\)\],\quad n\in\N.
\end{align*}
The last step follows by the observation that the immigrations $\varepsilon^{(\Delta_n)}_k$ as well as the thinnings that contribute to  $X^{(\Delta_n)}_k$, $k\Delta_n > a$, are independent of   $X^{(\Delta_n)}_{k-1},X^{(\Delta_n)}_{k-2},\dots $\; Rewriting the inner sum of the last term as an integral with respect to the random measure $N_n$, we obtain, for $a < b$,
\begin{align}
\E\bigg[1_{A_n} N_n\big((a,b]\big)\bigg]
= \E\[1_{A_n} \sum\limits_{k:\, k\Delta_n\in(a,b]}\Delta_n \(\eta+\int_{-\infty}^{k\Delta_n} h(k\Delta_n - s) N_n(\mathrm{d}s)\)\]\label{discrete_eq},\quad  A_n\in \frak{B}_a^{N_n},\,n\in\N.
\end{align}
Note that here and throughout the proof the upper integration bounds in the Hawkes intensities do not require special attention due to the assumption $h(0) = 0$ for reproduction intensities $h$ in Definition~\ref{Hawkes intensity}, respectively, Lemma~\ref{K_tilde}.
Now we show that \eqref{discrete_eq} converges to \eqref{HE} corresponding to the Hawkes process. For both sides of equation \eqref{discrete_eq}, this is achieved in three steps:
\begin{itemize}
\item First, we establish that the random variable in the expectation can be written as $\Phi(N_n)$, where
$\Phi: (M_p,\mathcal{M}_p) \to(\R,\B) $
denotes some measurable mapping with set of discontinuities $D_{\Phi}\subset M_p$.
\item Next, we show that $\P\[ N^*\in D_{\Phi}\] = 0$.
\item Finally, we prove that the random variables in question are uniformly integrable.
\end{itemize}
By Proposition~\ref{discontinuity}, the first two points together imply that $\Phi(N_n)\wto\Phi(N^*)$. The additional uniform-integrability property yields that the corresponding expectations also converge; see Theorem 5.4 in \citet{billingsley68}.\\

\noindent\emph{Left-hand side of~\eqref{discrete_eq}:}\\
Consider the map
\begin{align}
\Phi: \(M_p,\mathcal{M}_p\) \to \(\R,\mathcal{B}\),\  m  \mapsto1_{\{ m ((s_1,t_1]) \in D_1, \dots, m ((s_k,t_k]) \in D_k \}}  m \big((a,b]\big);\label{continuity}
\end{align}
see the definition of $\frak{B}_a^N$ in \eqref{semiring} for the notation.
We claim that $\Phi$ is vaguely continuous on $M_p \setminus \{m:\, m\(D_{\Phi}\)>0 \}$, where $D_{\Phi}:=\(\{a,b\}\cup\bigcup_{i=1}^k\{s_i, t_i\}\)$. Indeed: the map $m \mapsto m((a,b])$ is vaguely continuous on $M_p\setminus\{ m :\,  m (\{a,b\})>0\}$ and, for $ i=1,\dots,k,$ the maps
$m \mapsto m ((s_i, t_i]\})$ are vaguely continuous on $M_p\setminus\{ m :\,  m\( \{s_i, t_i\}\)>0\}$.
 The map $\N_0^k\ni\(l_1,\dots,l_k\)\mapsto 1_{\{l_1 \in D_1,\, \dots ,\,l_k \in D_k \}}$ is trivially continuous, so that
$ m  \mapsto 1_{\{ m ((s_1,t_1]) \in D_1, \dots, m ((s_k,t_k]) \in D_k \}} $ 
 is continuous on $M_p\setminus\bigcup_{i=1}^k\{s_i,t_i\}$.
From Proposition~\ref{discontinuity}, we have that $\Phi(N_n)\wto\Phi(N^*)$ if 
$\P\[N^*\(D_{\Phi}\)>0\] = 0$. Because $D_{\Phi}$ is finite, it suffices to show that $\P\[N^*\big(\{t\})>0\] = 0 \text{ for any } t\in \R.$ 
\begin{align*}
\P\big[N^*(\{t\})>0\big] = \E 1_{N^*\(\{t\}\)>0} \leq  \E N^*(\{t\})\stackrel{\text{Lemma}~\ref{unifint}}{=}\lim \limits_{n\to\infty}\E N_n(\{t\})\stackrel{\eqref{EN(kDelta)}}{<} \Delta\frac{\eta}{(1-\tilde{K})},\quad t\in\R, \Delta\in(0,\delta).
\end{align*}
So $\P\[N^*(\{t\})>0\] = 0 $ and, consequently, $\P\[N^*(D_{\Phi})>0\]$ must also be zero.
This establishes $\Phi(N_n)\wto\Phi(N^*)$, respectively, $1_{A_n} N_n(a,b) \wto 1_{A^*} N^{*}(a,b)$.
From Lemma~\ref{unifint}, we know that $\(N_n(a,b)\)$ is uniformly integrable, so $\(1_{A_n} N_n(a,b)\)$ is also uniformly integrable. Combining weak convergence and uniform integrability yields convergence of expectations
$$
\lim\limits_{n\to\infty}\E\bigg[1_{A_n} N_n\big((a,b]\big)\bigg] = \E\bigg[1_{A^*} N^{*}\big((a,b]\big)\bigg] .
$$
We have established the convergence of the left-hand side of~\eqref{discrete_eq} to the left-hand side of~\eqref{HE}.\\

\noindent\emph{Right-hand side of~\eqref{discrete_eq}:}\\
Note that the right-hand side of~\eqref{discrete_eq} converges to the right-hand side of \eqref{HE} if
\begin{align}
\sum\limits_{k\in(a,b]} \Delta_n\E&\[1_{A_n} \int\limits_{-\infty}^{k\Delta_n} h(k\Delta_n - s) N_n(\mathrm{d}s)\]\ \stackrel{n\to\infty}{\longrightarrow}\ \int\limits_a^b\E\[1_{A^*} \int\limits_{-\infty}^{t} h(t - s) N^{*}(\mathrm{d}s)\]\mathrm{d}t.\label{rhs}
\end{align}
As a first step for establishing \eqref{rhs}, note that, for all choices of M with $-M<a$, and, for $t\in[a,b]$,
\begin{align}
1_{A_n} \int_{-M}^{t} h(t - s) N_n(\mathrm{d}s)\ \wto\
1_{A^*} \int_{-M}^{t} h(t - s) N^*(\mathrm{d}s),\quad n\to \infty. \label{truncated}
\end{align}
This is due to a continuous-mapping argument similar to the one we have used for the left-hand side of~\eqref{discrete_eq}. We establish that the variances of the random variables $\int_{-M}^{t} h(t - s) N_n(\mathrm{d}s),\, n\in\N_0,$ are uniformly bounded:
\begin{align*}
\Var \int_{-M}^{t} h(t - s) N_n(\mathrm{d}s)
\leq \Var \sum\limits_{l=1}^{\lceil M/\Delta_n\rceil} h(l\Delta_n) X^{(\Delta_n)}_{-l}
= \sum_{l=1}^{\lceil M/\Delta_n\rceil} \sum_{m=1}^{\lceil M/\Delta_n\rceil} h(l\Delta_n) h(m\Delta_n)  \Cov \(X^{(\Delta_n)}_{-l}, X^{(\Delta_n)}_{-m}\).
\end{align*}
At this point, we write $R^{(\Delta_n)}$ for the autocovariance function of the INAR($\infty$) process $\(X^{(\Delta_n)}_l\)$. Applying Proposition~\ref{autocovariance} yields
\begin{align}
&\Var \int_{-M}^{t} h(t - s) N_n(\mathrm{d}s)\nonumber\\
&\leq \sum_{l=1}^{\lceil M/\Delta_n\rceil} h(l\Delta_n)  \sup h\sum_{m=1}^{\lceil M/\Delta_n\rceil}R^{(\Delta_n)}(|l-m|)\nonumber\\
& \leq  \sum_{l=1}^{\lceil M/\Delta_n\rceil} h(l\Delta_n)  \sup h\sum_{i=-\infty}^\infty R^{(\Delta_n)}(i)\nonumber\\
& \stackrel{\eqref{covsum_bound}}{\leq}  \sum_{l=1}^{\lceil M/\Delta_n\rceil} h(l\Delta_n) \sup h \frac{2\eta \Delta_n}{\big(1-K^{(\Delta_n)}\big)^3}\nonumber\\
& \leq\( \sup h\)^2\frac{M+1}{\Delta_n} \frac{2 \Delta_n\eta}{\big(1-K^{(\Delta_n)}\big)^3}\nonumber\\
&\stackrel{\eqref{K_unif_bound}}{\leq}\( \sup h\)^2 \frac{2(M+1)\eta}{\big(1-\tilde{K}\big)^3}\leq c_M<\infty,\label{variance_bound}
\end{align}
where $c_M$ is a constant independent of $n$, respectively, $\Delta_n$. We may conclude that the random variables $1_{A_n} \int_{-M}^{t} h(t - s) N_n(\mathrm{d}s),\, n\in\N,$ are uniformly integrable. Weak convergence together with uniform integrability yields convergence of expectations. We have established that, for $M$ with $-M<a$,
\begin{align}
\lim\limits_{n\to\infty}\E\[1_{A_n} \int_{-M}^{t} h(t - s) N_n(\mathrm{d}s)\] =
\E\[1_{A^*} \int_{-M}^{t} h(t - s) N^*(\mathrm{d}s)\] .\label{exp_truncated}
\end{align}
For the proof of~\eqref{rhs}, we consider a truncated part of $h$ and its remaining tail separately. For the truncated part, we use \eqref{exp_truncated}; for the tail part, we use the integrability condition $\int h\,\mathrm{d}t <1$: for any  $M >-a$, we have
\begin{align}
&\left|\sum\limits_{k\in(a,b]} \Delta_n\E\[1_{A_n} \int\limits_{-\infty}^{k\Delta_n} h(k\Delta_n - s) N_n(\mathrm{d}s)\]\nonumber
- \int\limits_a^b\E\[1_{A^*} \int\limits_{-\infty}^{t} h(t - s) N^{*}(\mathrm{d}s)\]\mathrm{d}t\right| \\
&\leq\left|\sum\limits_{k\in(a,b]} \Delta_n\E\[1_{A_n} \int\limits_{-M}^{k\Delta_n} h(k\Delta_n - s) N_n(\mathrm{d}s)\]
- \int\limits_a^b\E\[1_{A^*} \int\limits_{-M}^{t} h(t - s) N^{*}(\mathrm{d}s)\]\mathrm{d}t\right|\nonumber \\
&\hspace{0.5cm}+ \sum\limits_{k\in(a,b]} \Delta_n \E\[1_{A_n} \int\limits_{-\infty}^{-M}h(k\Delta_n - s) N_n(\mathrm{d}s)\]\nonumber\\
&\hspace{0.5cm}+\int\limits_a^b \E\[1_{A^*} \int\limits_{-\infty}^{-M} h(t - s) N^{*}(\mathrm{d}s)\]\mathrm{d}t.\label{bigsum}
\end{align}
Let $\varepsilon>0$.
We show that we can find $M_\varepsilon$ and $N_\varepsilon\in\N$ such that each of the three summands in \eqref{bigsum} is bounded by $\varepsilon/3$ for $n\geq N_\varepsilon$. First, consider the integrand of the last summand in \eqref{bigsum}. By arguing similarly to the m((a,b]) part in \eqref{continuity}, we find that $\E N^*(\mathrm{d}t) /\mathrm{d}t \leq \eta/(1-\tilde{K})$. So we can choose $M^{(1)}_{\varepsilon}>0$ so large that, for $t\in[a,b]$,
 \begin{align*}
  \E\[1_{A^*} \int_{-\infty}^{-M^{(1)}_{\varepsilon}} h(t - s) N^{*}(\mathrm{d}s)\]&\leq \E\int_{-\infty}^{-M^{(1)}_{\varepsilon}} h(t - s) N^*(\mathrm{d}s) \leq \frac{\eta}{1-\tilde{K}}\int_{M^{(1)}_{\varepsilon}+a}^{\infty} h(s)\mathrm{d}s< \frac{\varepsilon}{3(b-a)}.
 \end{align*}
 The summands of the second term in \eqref{bigsum} can be bounded by ${\varepsilon}/{(3\lceil b-a\rceil)}$ in a similar and even more direct way---uniformly over $n$ and possibly with respect to another $M^{(2)}_{\varepsilon}>0$. We set $M_\varepsilon := \max\big\{M^{(1)}_\varepsilon, M^{(2)}_\varepsilon\big\}$. So taking the integral over the interval $[a,b]$ of the last summand, respectively, the Riemann sum of the second summand in~\eqref{bigsum}, yields
\begin{align}
\int\limits_{a}^{b}  \E\[1_{A^*} \int_{-\infty}^{-M_{\varepsilon}} h_M(t - s) N^{*}(\mathrm{d}s)\] \mathrm{d}t <\frac{\varepsilon}{3} \label{epsilon_bound1},
\end{align}
respectively,
\begin{align}
\sum\limits_{k:\,k\Delta_n\in(a,b]} \Delta_n\E\[1_{A^*} \int_{-\infty}^{-M_{\varepsilon}} h_M(k\Delta_n - s) N^{*}(\mathrm{d}s)\] <\frac{\varepsilon}{3} \label{epsilon_bound2}.
\end{align}
For the first term in \eqref{bigsum}, denote 
$$
E_n(t):= \E\[1_{A_n} \int_{-M_{\varepsilon}}^{t} h(t - s) N_n(\mathrm{d}s)\]
\quad \text{
and
}
\quad
E^*(t):= \E\[1_{A^*} \int_{-M_{\varepsilon}}^{t} h(t - s) N^{*}(\mathrm{d}s)\].
$$
 From \eqref{truncated}, we already know that, for any choice of $M_{\varepsilon}$, $\lim_{n\to\infty}|E_n(t)-E^*(t)|=0,\, t\in(a,b]$. However, the convergence of the Riemann-like sums $\sum_{k:\,k\Delta_n \in(a,b]}E_n(k\Delta_n)\Delta_n$ to the integral $\int_a^b E^*(s)\mathrm{d}s$ is nontrivial as the functions $E_n$ are themselves part of the sequence. We write
\begin{align}
\left|\sum\limits_{k:\,k\Delta\in(a,b]}\Delta_n E_n(k\Delta_n) - \int_a^bE^*(t)\mathrm{d}t\right|\leq \left|\sum\limits_{k:\,k\Delta\in(a,b]}\Delta_n E_n(k\Delta_n) - \int_a^bE_n(t)\mathrm{d}t\right|+\left| \int_a^bE_n(t)\mathrm{d}t - \int_a^bE^*(t)\mathrm{d}t\right| \label{Riemann_sum}.
\end{align}
The second absolute difference in \eqref{Riemann_sum} converges to zero because of dominated convergence of $(E_n)$. Indeed, for $t\in[a,b]$,
 \begin{align}
 \E\[1_{A_n} \int_{-M_{\varepsilon}}^{t} h(t - s) N_n(\mathrm{d}s)\]\leq  \E \int_{-M_{\varepsilon}}^{b} h(t - s) N_n(\mathrm{d}s)
 \leq  \sup h \sum_{k:\,k\Delta_n\in (-M_{\varepsilon},b]} \E X^{(\Delta_n)}_k
  \leq \sup h\frac{\lceil M_{\varepsilon}+b\rceil\eta}{1-\tilde{K}}.\label{ub}
\end{align}
Note that $\sup h < \infty$ follows from the piecewise-continuity assumption. In view of the upper bound \eqref{ub}, we apply the dominated convergence theorem and choose $N_\varepsilon^{(1)}\in\N$ so large that, for the second absolute difference in~\eqref{Riemann_sum}, we have
\begin{align}
\left| \int_a^bE_n(t)\mathrm{d}t - \int_a^bE^*(t)\mathrm{d}t\right| <\frac{\varepsilon}{6},\quad n\geq N_\varepsilon^{(1)}. \label{epsilon_sechstel1}
\end{align}
For the first absolute difference in \eqref{Riemann_sum}, we assume that, without loss of generality, the piecewise continuous function $h$ is uniformly continuous on $(0,\infty)$. Otherwise, we note that any piecewise continuous function on $\R$ that is vanishing at infinity is uniformly continuous on each of its continuous pieces and do the following calculation once for every uniformly continuous piece of $h$. Uniform continuity gives us a constant $\delta_h>0$, so small, that, for any $t_0>0$, 
\begin{align}
 |t-t_0| <\delta_h\; \land \;t>0\quad\Rightarrow\quad
|h(t)-h(t_0)|<\frac{\varepsilon (1-\tilde{K})}{12\eta(b-a+\delta)(M_\varepsilon+b+\delta)}. \label{delta_h}
\end{align}
Now, choose $N_{\varepsilon}^{(2)}$ so large that 
\begin{align}
\Delta_n<\min\left\{\delta_h, \frac{\varepsilon(1-\tilde{K})}{12\eta (b-a+\delta)\sup h}\right\}\quad  \text{for  }\quad n\geq N_{\varepsilon}^{(2)}. \label{N_2}
\end{align} 
Here again, $\delta$ and $\tilde{K}$ are the constants from Lemma~\ref{K_tilde}.
Let $a\leq s<t\leq b$ with $t-s< \Delta_n (<\delta_h),$ then
\begin{eqnarray}
|E_n(t) - E_n(s)|&
=&\left|\sum\limits_{k\Delta_n\in (-M_\varepsilon,t)} \E\[1_{A_n} h(t - k\Delta_n) X_k^{(\Delta_n)}\] - \sum\limits_{k\Delta_n\in (-M_ \varepsilon,s]} \E\[1_{A_n} h(s - k\Delta_n) X_k^{(\Delta_n)}\] \right|\nonumber\\
&\leq& \(\sum\limits_{k\Delta_n\in (-M_\varepsilon,s]}| h(t - k\Delta_n)- h(s - k\Delta_n) |+\sum\limits_{k\Delta_n\in (s,t)} h(t - k\Delta_n) \)\E X_0^{(\Delta_n)}\nonumber\\
&\stackrel{\eqref{delta_h}}{\leq}& \(\frac{M_\varepsilon+s+\Delta_n}{\Delta_n}\frac{\varepsilon(1-\tilde{K})}{12\eta(b-a+\delta)(M_\varepsilon+b+\delta)}+\sup h \)\frac{\Delta_n\eta}{1-K^{(\Delta)}}\nonumber\\
&\leq& \frac{\varepsilon}{12(b-a+\delta)} +\Delta_n\frac{\eta\sup h}{1-\tilde{K}}\nonumber\\
&\stackrel{\eqref{N_2}}{\leq}& \frac{\varepsilon}{12(b-a+\delta)} + \frac{\varepsilon}{12(b-a+\delta)} \quad =\quad   \frac{\epsilon}{6(b-a+\delta)}. \label{bound2}
\end{eqnarray}
Summarizing the above calculation, we have established the existence of an $N_\varepsilon^{(2)}\in\N$ such that, for all $n\geq N_\varepsilon^{(2)}$, we have $|E_n(t) - E_n(s)|\leq  \varepsilon/(6(b-a+\delta))$ whenever $|t-s|<\Delta_n$ and $s,t \in[a,b]$.
For the first absolute difference in \eqref{Riemann_sum}, we therefore get
\begin{eqnarray}
\left|\sum\limits_{k:\,k\Delta_n\in(a,b]}\Delta_n E_n(k\Delta_n) - \int_a^bE_n(t)\mathrm{d}t\right|
&\leq &\sum\limits_{k:\,k\Delta_n\in(a,b]}\int_{k\Delta_n}^{(k+1)\Delta_n}\left| E_n(k\Delta_n) - E_n(t)\right|\mathrm{d}t\nonumber\\
&\leq& \sum\limits_{k:\,k\Delta_n\in(a,b]}\int_{k\Delta_n}^{(k+1)\Delta_n}\frac{\varepsilon}{6(b-a+\delta)}\mathrm{d}t\nonumber\\
&\stackrel{\eqref{bound2}}{\leq}&\frac{(b-a+\Delta_n)}{\Delta_n} \Delta_n \frac{\varepsilon}{6(b-a+\delta)}\nonumber\\
& \leq &\frac{\varepsilon}{6} \label{epsilon_sechstel2}, \quad n\geq N_\varepsilon^{(2)}.
\end{eqnarray}

Set $N_\varepsilon := \max\Big\{N_\varepsilon^{(1)},N_\varepsilon^{(2)}\Big\}$. Combining \eqref{epsilon_sechstel1} and \eqref{epsilon_sechstel2}, we get that
\begin{align}
\left|\sum\limits_{k:k\Delta\in(a,b]}\Delta_n E_n(t) - \int_a^bE^*(t)\mathrm{d}t\right|&<\frac{\varepsilon}{3},\quad n\geq N_\varepsilon.\label{epsilon_bound3}
\end{align}
Combining \eqref{epsilon_bound1}, \eqref{epsilon_bound2} and \eqref{epsilon_bound3}, shows that \eqref{bigsum} is smaller than the given $\varepsilon$, for $n\geq N_{\varepsilon}$ and $M:=M_{\varepsilon}$,\ i.e.,\begin{align*}
\lim\limits_{n\to\infty}\sum\limits_{k:k\Delta_n\in(a,b]}\Delta_n\E \[1_{A_n}\int_{-\infty}^{\Delta_n} h(k\Delta_n - s)N_n(\mathrm{d}s)\]=\int_a^b\E \[1_{A^*}\int_{-\infty}^{t} h(t - s) N^*(\mathrm{d}s)\]\mathrm{d}t.
\end{align*}
We have established that the right-hand side of \eqref{discrete_eq} also converges to the right-hand side of \eqref{HE}. With the result from Proposition~\ref{uniqueness} on the uniqueness property of $\eqref{HE}$, we find that every subsequential limit $N^*$ has the same distribution as the Hawkes process $N$. We may then conclude that, for $\Delta\to 0$, the approximating sequence of point processes $\(N^{\(\Delta\)}\)$ converges weakly to the Hawkes process $N$.
\end{proof}

\subsection{Structural analogies}
\noindent Besides the formal convergence result from Theorem~\ref{weak_convergence}, we point out a number of more general, partly obvious structural parallels between the Hawkes and the INAR($\infty$) model. The branching structure of the INAR($\infty$) model described after Theorem~\ref{existence} is the same as the branching structure of the Hawkes process described after Proposition~\ref{uniqueness}. This similar underlying structure yields analogous equations for the moment-generating function of the INAR($\infty$) and the Laplace functional of the Hawkes process; see Theorem~\ref{INAR_mg} and Proposition~\ref{hawkes_laplace}. Consequently, we can expect similar distributional properties. In the following statements, we compare the models more directly by considering a specific Hawkes process $N$ together with its approximating family of INAR($\infty$) sequences, $\(X^{(\Delta)}\),\,\Delta\in(0,\delta)$, obtained from Theorem~\ref{weak_convergence}. 
\begin{enumerate}
\item The defining equations \eqref{DE} and \eqref{HE} have similar interpretations: taking expectations conditional on $\mathcal{H}^{X^{(\Delta)}}_{n-1}:= \sigma\(X^{(\Delta)}_k:\, k\leq{n-1}\)$ on both sides of \eqref{DE} yields
$$
\frac{\E\[X_n^{(\Delta)}| \mathcal{H}^{X^{(\Delta)}}_{n-1} \]}{\Delta} = \eta +\sum\limits_{k=-\infty}^{n-1} h\big((n-k)\Delta \big)X_{k}^{(\Delta)},\quad n\in\Z,
$$
which is similar to the local version of \eqref{HE}
$$
\frac{\E\[ N(\mathrm{d}t)| \mathcal{H}^N_{t}\]}{\mathrm{d}t} = \eta + \int\limits_{-\infty}^{t}h(t-s)N(\mathrm{d}s),\quad t\in\R .
$$

\item The stability criteria from Theorems~\ref{existence} and~\ref{uniqueness} correspond: for the time series case, $\sum_{k=1}^\infty \Delta h(\Delta k)< 1$ is a sufficient existence condition which in the Hawkes case becomes $\int_0^\infty h(t)\mathrm{d}t < 1$. \citet{bremaud01} establishes the existence of a nontrivial Hawkes process, where the total weight of the reproduction intensity equals 1 and the immigration intensity is zero. The analogous statement for INAR($\infty$) sequences can be derived in a similar way.
\item From the correspondence of the generating functions, we know that the moments of the INAR($\infty$) and the Hawkes process must be similar. The first and second moments of both model classes can be calculated and compared explicitly. The first moments are
$$
\frac{\E X^{(\Delta)}_n}{\Delta} = \frac{\eta}{1-K^{(\Delta)}},
\quad \text{respectively ,} \quad \frac{\E N(\mathrm{d}t)}{\mathrm{d}t} = \frac{\eta}{1-K}.
$$
For the first equality see Theorem~\ref{existence}; for the second equality see~\citet{hawkes71a}.
\item
For the autocovariances of both models, i.e., for
$$
R^{(\Delta)}(n):= \frac{\E\[X^{(\Delta)}_0 X^{(\Delta)}_{n}\]}{\Delta^2} - \(\frac{\E X^{(\Delta)}_0}{\Delta}\)^2,\quad n\in\Z,
$$
respectively, for
$$
r(t) :=\frac{\E\[\mathrm{d}N_0 \mathrm{d}N_{t}\]}{(\mathrm{d}t )^2 } -\(\frac{\E N(\mathrm{d}t)}{\mathrm{d}t}\)^2, \quad t\in\R,
$$
we find at the origin
\begin{align}
\label{Var_INAR}
R^{(\Delta)}(0) = \frac{1}{\Delta} \frac{\eta}{1-K^{(\Delta)}} + \sum\limits_{k=1}^\infty\Delta h(k\Delta)R(k),
\end{align}
respectively,
\begin{align}
r(0) =  \frac{1}{\mathrm{d}t} \frac{\eta}{ 1-K} + \int\limits_{0^+}^{\infty}h(s)r(s)\mathrm{d}s.\label{Var_Hawkes}
\end{align}
Implicit equations of Yule--Walker type are valid in both cases:
\begin{align}
R^{(\Delta)}(n) = \sum\limits_{k=1}^{\infty}  h(\Delta k) R^{(\Delta)}(|n| - k)\Delta\label{INAR_YW},\quad n\neq 0,
\end{align}
respectively,
\begin{align}
r(t) =\int\limits_{0}^{\infty}h(s)r(|t|-s)\mathrm{d}s, \quad t\neq 0 .\label{Hawkes_YW}
\end{align}
Equations~\eqref{Var_INAR} and~\eqref{INAR_YW} are standard facts given the representation of the INAR($\infty$) sequence as a standard AR($\infty$) process in Proposition~\ref{AR}; \eqref{Var_Hawkes} and \eqref{Hawkes_YW} are derived  in \citet{hawkes71b}. 
\end{enumerate}
\hspace{0.5cm}
\subsection{The choice of the counting sequence distribution}\label{The Choice of The Counting Sequence Distribution}
\noindent As a last remark, we again refer to the choice of the counting sequence distribution in Definition~\ref{thinning_operator}. An obvious alternative to the Poisson distribution would have been the Bernoulli distribution; see the discussion after Definition~\ref{INAR}. With the Bernoulli choice, each individual would have not more than one offspring at each future point in time instead of potentially unboundedly many. We want to indicate that in the limit (in the sense of Theorem~\ref{weak_convergence} where all reproduction coefficients go to zero) this option would yield the same result: for $\Delta\in(0,1)$ and $(\alpha_k)\subset [0,1]$ such that $\sum\alpha_k<1$, let
$
\xi^{(\Delta)}_k \iidsim \text{Pois}(\Delta \alpha_k),\, k\in\N$, and $\tilde{\xi}^{(\Delta)}_k \iidsim \text{Bernoulli}(\Delta\alpha_k),\, k\in\N.
$
Then one can easily show that
$$
\lim\limits_{\Delta \to 0^+}\frac{ \P\[\xi^{(\Delta)}_k = 0\]}{\P\[\tilde{\xi}^{(\Delta)}_k = 0\]} =\lim\limits_{\Delta \to 0^+} \frac{ \P\[\xi^{(\Delta)}_k = 1\]}{\P\[\tilde{\xi}^{(\Delta)}_k = 1\]} = 1, \quad k\in\N.
$$
So when $\Delta$ is very small, the offspring distribution candidates, Poisson and Bernoulli, become very similar. Roughly speaking, the limiting procedure in Theorem~\ref{weak_convergence} is nothing else than a (complicated) superposition of limits of the form $\sum_{k=1}^\infty \xi^{(\Delta)}_k$. For these kinds of sums, we have that
$$
\lim\limits_{\Delta \to 0^+} \frac{\P\[\sum\limits_{k=1}^\infty \xi^{(\Delta)}_k= n\]}{\P\[\sum\limits_{k=1}^\infty \tilde{\xi}^{(\Delta)}_k= n\]} = 1,\quad n\in\N_0.
$$
If $\alpha_k>0$ infinitely often, then---by the Poisson limit theorem---the two considered probabilities are even equal for all $\Delta\in[0,1]$.
 In view of the above, one can expect that Theorem~\ref{weak_convergence} with the Bernoulli distribution as a starting point would yield the same limit as the Poisson distribution, namely the Hawkes process.
\section{Conclusion}\label{Discussion}
\noindent The mathematical formulation of the correspondence between INAR and Hawkes processes in Theorem~\ref{weak_convergence} has the following heuristic interpretation relevant for practical applications: let $\(N^{(\Delta)}\)$ be the approximating family of INAR($\infty$)-based point processes for a Hawkes process $N$ as in Theorem~\ref{weak_convergence}. Then for $\Delta>0$ (small), the finite-dimensional distributions of $N^{(\Delta)}$ are approximately equal to the finite-dimensional distributions of $N$. In particular, we find, for $n\in\N$,
\begin{align}
\bigg(N\Big(\big(0, \Delta\big] \Big), N\Big(\big(\Delta, 2\Delta\big]\Big),\dots, N\Big(\big((n-1)\Delta, n\Delta\big]\Big)\bigg)& 
\stackrel{\mathrm{d}}{\approx}
\bigg(N^{(\Delta)}\Big(\big(0, \Delta\big] \Big), N^{(\Delta)}\Big(\big(\Delta, 2\Delta\big]\Big),\dots, N^{(\Delta)}\Big(\big((n-1)\Delta, n\Delta\big]\Big) \bigg)\nonumber\\
& = \(X^{( \Delta)}_1,X^{( \Delta)}_2,\dots,X^{( \Delta)}_n\),\label{bin_count_view}
\end{align}
where $\(X^{( \Delta)}_n\)$ is the INAR($\infty$) sequence that the point process $N^{(\Delta)}$ is based on;\ see Theorem~\ref{weak_convergence}. So $\(X^{( \Delta)}_n\)$ is an approximative model for the bin-count sequences of the considered Hawkes process $N$. This point of view can be very fertile. For example, it leads to a nonparametric estimation procedure for the Hawkes process. Instead of fitting a Hawkes process directly, one fits the corresponding INAR($\infty$) model from Theorem~\ref{weak_convergence} on the bin-counts for some small $\Delta>0$. \citet{kirchner15b} gives a detailed discussion of this estimation procedure including asymptotic properties, bias issues and techniques for an optimal bin-size choice $\Delta$. Also note that the Hawkes bin-count sequence view in~\eqref{bin_count_view} on the INAR($\infty$) model is another argument for the choice of the Poisson instead of the Bernoulli distribution for the counting sequences: clearly a Hawkes event can have potentially more than one direct offspring event in a future time-interval.
\par For any $\N_0$-valued time series model, one can construct a point process model the way it is done in Theorem~\ref{weak_convergence}. So, studying integer-valued time series can be inspiring for developing and understanding point process models. For example, one might want to consider the corresponding point process of an integer-valued autoregressive moving-average (INARMA) time series. For INARMA time series, a moving-average part is added to the autoregressive part in the defining difference equations~\eqref{DE}; see \citet{fokianos01}. In fact, the resulting point process is nothing else but the ``dynamic contagion process" as proposed in \citet{zhao12}. Also for integer-valued time series theory, it is inspiring to translate point process models into the discrete-time setup. For example, one might want to translate the generalizing results on selfexciting point processes in~\citet{bremaud96} to the INAR context. Here, the selfexcitement of the point process is modeled not as an affine but as a general Lipschitz function of the past of the process. The analogous generalization in time series theory is ``nonlinear Poisson autoregression'' as studied in \citet{fokianos12} for the case $p=1$. In the latter paper, the authors also find a Lipschitz condition for the transfer function. Yet another idea might be to consider marked INAR sequences in analogy to marked Hawkes processes; see~\citet{liniger09}.
\par One can expect that the results of our paper also hold in a multivariate setup---with the obvious modifications. However, the notation would become even more tedious, so that we have decided to concentrate on the univariate case. In view of the many INAR/Hawkes-correspondences presented in this paper we conclude: INAR($\infty$) sequences are discrete-time versions of Hawkes processes and, vice versa, Hawkes processes are continuous-time versions of INAR($\infty$) sequences. 
\section*{Acknowledgements} 
\noindent  M.K. takes pleasure in thanking Thomas Mikosch for his most valuable comments on an earlier version of the paper and Paul Embrechts for
guiding him to the topic of Hawkes processes and its applications to quantitative risk management. The author acknowledges financial support from RiskLab at the ETH Zurich and the Swiss
Finance Institute. Furthermore, M.K.\ thanks Isabel Marquez da Silva for sharing her expertise on integer-valued models and Rita Kirchner as well as Anne MacKay for help with the editing. 
\section*{References}
\bibliographystyle{elsarticle-harv}
\bibliography{/Users/matthiaskirchner/Desktop/Studium/Diss/Bibliographies/diss.biblio}
\end{document}